 \theoremstyle{definition}  
  \newtheorem{definition}{Definition}[section]
  \newtheorem{example}[definition]{Example}
   \newtheorem{remark}[definition]{Remark}
  \theoremstyle{plain}  
  \newtheorem{theorem}[definition]{Theorem}
  \newtheorem{lemma}[definition]{Lemma}
  \newtheorem{proposition}[definition]{Proposition}
  \newtheorem{corollary}[definition]{Corollary}
  \theoremstyle{remark}
  \newtheorem*{notation}{Notation}
 \renewcommand{\it}[1]{\textit{#1}}
 \renewcommand{\sf}[1]{\textsf{#1}}
 \newcommand{\mbb}[1]{\mathbb{#1}}
 \newcommand{\mcl}[1]{\mathcal{#1}}
 \newcommand{\mbf}[1]{\mathbf{#1}}
 \newcommand{\msc}[1]{\mathscr{#1}}
 \newcommand{\msf}[1]{\text{\small$\sf{#1}$}}
 \newcommand{\ol}[1]{\overline{#1}}
 \newcommand{\ul}[1]{\underline{#1}}
 \newcommand{\wtilde}[1]{\widetilde{#1}}
 \newcommand{\abs}[1]{\left\lvert#1\right\rvert}
 \newcommand{\norm}[1]{\left\lVert#1\right\rVert}
 \newcommand{\bnorm}[1]{\bigl\lVert#1\bigr\rVert}
 \newcommand{\B}[1]{\msc{B}({#1})} 
 \newcommand{\K}[1]{\mcl{K}({#1})}
 \newcommand{\ip}[1]{\langle#1\rangle}
 \newcommand{\ran}[1]{\sf{range}(#1)}
 \newcommand{\clran}[1]{\ol{\sf{range}}(#1)}
 \renewcommand{\ker}[1]{\sf{ker}(#1)}
 \newcommand{\mscriptsize}[1]{{\setlength{\arraycolsep}{.3ex}\text{\scriptsize$#1$}}}
 \newcommand{\Matrix}[1]{\begin{bmatrix}#1\end{bmatrix}}
 \newcommand{\sMatrix}[1]{\mscriptsize{\Matrix{#1}}}
 \DeclareMathOperator{\id}{\sf{id}}
 \DeclareMathOperator{\lspan}{\sf{span}}
 \DeclareMathOperator{\cspan}{\ol{\lspan}}
 \numberwithin{equation}{section}
 \setlist[enumerate]{font=\upshape,noitemsep, topsep=0pt} 
 \setlist[itemize]{noitemsep, topsep=0pt}
\begin{document}

\title[extreme points]{$C^*$-extreme points of unital completely positive maps on real $C^*$-algebras}

\author{Anand O. R}
\address{Indian Institute of Technology Madras, Department of Mathematics, Chennai, Tamilnadu 600036, India}
\email{anandpunartham@gmail.com, ma20d006@smail.iitm.ac.in}

\author{K. Sumesh}
\address{Indian Institute of Technology Madras, Department of Mathematics, Chennai, Tamilnadu 600036, India}
\email{sumeshkpl@gmail.com, sumeshkpl@iitm.ac.in}

\author{Arindam Sutradhar}
\address{Indian Institute of Technology Madras, Department of Mathematics, Chennai, Tamilnadu 600036, India}
\email{arindam1050@gmail.com, ic40339@imail.iitm.ac.in}

\date{\today}

\begin{abstract}
 In this paper, we investigate the general properties and structure of $C^*$-extreme points within the $C^*$-convex set $\mathrm{UCP}(\mcl{A},\B{\mcl{H}})$ of all unital completely positive (UCP) maps from a unital real $C^*$-algebra $\mcl{A}$ to the algebra $\B{\mcl{H}}$ of all bounded real linear maps on a real Hilbert space $\mcl{H}$. We analyze the differences in the structure of $C^*$-extreme points between the real and complex $C^*$-algebra cases.  In particular, we show that the necessary and sufficient conditions for a UCP map between matrix algebras to be a $C^*$-extreme point are identical in both the real and complex matrix algebra cases. We also observe significant differences in the structure of $C^*$-extreme points when $\mcl{A}$ is a commutative real $C^*$-algebra compared to when $\mcl{A}$ is a commutative complex $C^*$-algebra. We provide a complete classification of the $C^*$-extreme points of $\mathrm{UCP}(\mcl{A},\B{\mcl{H}})$, where $\mcl{A}$ is a unital commutative real $C^*$-algebra and $\mcl{H}$ is a finite-dimensional real Hilbert space. As an application, we classify all $C^*$-extreme points in the $C^*$-convex set of all contractive skew-symmetric real matrices in $M_n(\mbb{R})$.
\end{abstract}

\keywords{real $C^*$-algebra, completely positive map, $C^*$-convexity, $C^*$-extreme point}

\subjclass[2020]{46L05, 46L07, 46L30}

\maketitle

\tableofcontents

\section{Introduction}

    The notion of $C^*$-convexity and $C^*$-extreme points was introduced by Loebl and Paulsen \cite{LoPa81} for subsets of a complex $C^*$-algebra. This concept was further explored in \cite{HMP81, FaMo93, Mag01}. In \cite{FaMo97}, Farenick and Morenz extended the notion of $C^*$-convexity and $C^*$-extreme points to the convex set $\mathrm{UCP}(\mcl{A},\B{\mcl{H}})$, which consists of all unital completely positive (UCP) maps from a complex $C^*$-algebra $\mcl{A}$ to the algebra $B(\mcl{H})$ of all bounded linear maps on a complex Hilbert space $\mcl{H}$. They provided a complete description of the $C^*$-extreme points of $\mathrm{UCP}(\mcl{A},\B{\mcl{H}})$ when $\mcl{A}$ is either a commutative $C^*$-algebra or the algebra $M_n(\mbb{C})$ of all $n\times n$ complex matrices, and $\mcl{H}$ is a finite-dimensional Hilbert space. Subsequently, in \cite{FaZh98} Farenick and Zhou gave a comprehensive description of the $C^*$-extreme points when $\mcl{A}$ is an arbitrary unital $C^*$-algebra and $\mcl{H}$ is a finite-dimensional Hilbert space. For an abstract characterization of the $C^*$-extreme points of $\mathrm{UCP}(\mcl{A},\B{\mcl{H}})$ in the general case, we refer to \cite{FaZh98, Zho98}. When $\mcl{A}$ is a commutative unital $C^*$-algebra, the $C^*$-extreme points of $\mathrm{UCP}(\mcl{A},\B{\mcl{H}})$ were studied in \cite{Gre08,Gre09}. More recently, $C^*$-extreme points have been extensively studied in various contexts (see  \cite{BBK21,BhKu22,BDMS23, BaHo24, AnSu25}). 

    The study of $C^*$-extreme points within $C^*$-convex subsets of completely positive (CP) maps has primarily concentrated on complex $C^*$-algebras. This article begins an exploration into the $C^*$-extreme points of UCP maps between real $C^*$-algebras. While one might speculate that a similar theory could be developed for the real case via the process of "complexification", technical challenges arise when attempting to translate results from the complex setting to the real context through this method. For an overview of these challenges, see the recent survey article \cite{MMPS22}. Although operator algebraists historically overlooked the theory of real $C^*$-algebras, this trend has shifted in recent years. Independently, the theory of real $C^*$-algebras has gained notable applications \cite{Ros16, CDPR23}. Additionally, in \cite{Ruan03, BlTe21, CDPR23, BlCeKa24}, the study of completely positive maps between real $C^*$-algebras was undertaken, which further motivates the investigation of $C^*$-extreme points in the real case.

    This paper is organized as follows. In Section 2, we recall the basic definitions and results needed for the subsequent analysis. Section 3 defines and examines the general properties of the $C^*$-extreme points of the $C^*$-convex set $\mathrm{UCP}(\mcl{A,\B{\mcl{H}}})$, where $\mcl{A}$ is a unital real $C^*$-algebra and $\mcl{H}$ is a real Hilbert space. In Example \ref{eg-complexi-not-Cstar}, we show that if $\Phi\in\mathrm{UCP}(\mcl{A,\B{\mcl{H}}})$ is a $C^*$-extreme point, its complexification may not necessarily be a $C^*$-extreme point. This example highlights the need to separately develop a theory for real $C^*$-algebra case to understand the $C^*$-extreme points. However, Proposition \ref{prop-Phi_c-Phi-C^*ext} shows that if the complexification is a $C^*$-extreme point, then the original UCP map is also a  $C^*$-extreme point, when $\mcl{H}$ is a finite-dimensional real Hilbert space.
    Though similar techniques from the complex case apply to the real case with only mild changes in the hypotheses, it is necessary to present them for further analysis. As in the complex case, Proposition \ref{prop-Cstar-ext-Lin-extr} establishes that $C^*$-extreme points of $\mathrm{UCP}(\mcl{A,\B{\mcl{H}}})$ are linear extreme points as well, when $\mcl{H}$ is a finite-dimensional real Hilbert space. Furthermore, in Theorem \ref{thm-Cext-abstract-char} we summarize several equivalent criteria for a UCP map to be a $C^*$-extreme point, which serve as the real analogues of known results from the complex case. Corollary \ref{cor-pure-and-homo} shows that pure UCP maps and $\ast$-homomorphisms are $C^*$-extreme points.  In the complex case, it is well-known (\cite{FaMo97}) that inflation of a pure state is always a $C^*$-extreme point. Consequently, in the complex case $C^*$-extreme points of $\mathrm{UCP}(\mcl{A,\B{\mcl{H}}})$ always exist. However, in the real $C^*$-algebra case, we do not yet know whether $\mathrm{UCP}(\mcl{A},\B{\mcl{H}})$ always contains $C^*$-extreme points. We provide a sufficient condition for an inflation of a pure state on a real $C^*$-algebra to be a $C^*$-extreme point in Proposition \ref{prop-state-inflation}. Corollary \ref{cor-state-inflation-matrix} guarantees the existence of $C^*$-extreme points of $\mathrm{UCP}(M_n(\mbb{R}),\B{\mcl{H}})$. Theorem \ref{thm-directsum-pure} states that any $C^*$-extreme point of $\mathrm{UCP}(\mcl{A},M_n(\mbb{R}))$ can be expressed as a direct sum of pure UCP maps up to unitary equivalence. A key ingredient in establishing this theorem is Proposition \ref{prop-irred-OS}. In the remainder of Section 3, we examine the necessary and sufficient conditions established in \cite[Theorem 2.1]{FaZh98} for a UCP map on complex $C^*$-algebra to be a $C^*$-extreme point, in the context of real $C^*$-algebras. We provide examples illustrating that the "if and only if" conditions do not hold in the real case. However in Theorems \ref{thm-FaZh-MAIN-realtype} and \ref{thm-MAIN-FaZh-realtype-converse}, under suitable hypothesis, we provide a real analogue of \cite[Theorem 2.1]{FaZh98}. Finally, in Section 4, we explore the $C^*$-extreme points of $\mathrm{UCP}(\mcl{A},\B{\mcl{H}})$, where $\mcl{A}$ is a commutative unital real $C^*$-algebra. In the complex case, it is known that $\ast$-homomorphisms are the only $C^*$-extreme points. However, this is not true in the case of real $C^*$-algebras; in Theorem \ref{thm-commutative-Cext-structure-MAIN} we completely characterize the $C^*$-extreme points for the case $\mcl{H}$ is finite-dimensional real Hilbert space. As an application of this characterization, Corollary \ref{cor-Cstar-ext-CSn} identifies the $C^*$-extreme points of the $C^*$-convex set of all contractive skew-symmetric real matrices in $M_n(\mbb{R})$.

\section{Preliminaries and basic results}\label{sec-Prelim}

    Unless stated otherwise, all vector spaces that we consider are defined over the field $\mbb{R}$ of real numbers. By a linear map, we always mean a real linear map. 
    A \textit{real $C^*$-algebra} is a real Banach $\ast$-algebra $\mcl{A}$ such that $\norm{a^*a}=\norm{a}^2$ and $1+a^*a$ is invertible in unit extension of the algebra for all $a\in\mcl{A}$. Any element $a\in\mcl{A}$ can be uniquely decomposed as $a=a_1+a_2$, where $a_1:=\frac{1}{2}(a+a^*)\in\mcl{A}$ is self-adjoint (i.e., $a_1^*=a_1$) and $a_2:=\frac{1}{2}(a-a^*)\in\mcl{A}$ is anti-symmetric (i.e., $a_2^*=-a_2$). Two real $C^*$-algebras $\mcl{A}$ and $\mcl{B}$ are said to be isomorphic, and write $\mcl{A}\cong\mcl{B}$, if there exists a $\ast$-isomorphism (i.e., a bijective $\ast$-homomorphism) between them. Such a $\ast$-isomorphism will be necessarily isometric. 
    
    Let $\mcl{H}$ be a real Hilbert space. Then the space $\B{\mcl{H}}$ of all bounded (real) linear maps on $\mcl{H}$ forms a unital real $C^*$-algebra; it is non-commutative if $\dim(\mcl{H})>1$. By the Gelfand-Naimark theorem (see \cite[Proposition 5.1.2]{Li03}), any real $C^*$-algebra $\mcl{A}$ is isometrically $\ast$-isomorphic to a norm-closed $\ast$-subalgebra of $\B{\mcl{H}}$ for some real Hilbert space $\mcl{H}$. Another example of a non-commutative unital real $C^*$-algebra is the algebra of quaternions, 
    \begin{align*}
        \mbb{H}:=\{\alpha+\beta \textbf{i}+\gamma \textbf{j}+\delta \textbf{k}: \alpha,\beta,\gamma,\delta\in\mbb{R}\},
    \end{align*}
    which is the four-dimensional real vector space with basis $\{1,\textbf{i},\textbf{j},\textbf{k}\}$ satisfying
        $\textbf{i}^2=\textbf{j}^2=\textbf{k}^2=-1=\textbf{ijk}$,
    where $1$ is the multiplicative identity. The involution $\ast$ and the $C^*$-norm $\norm{\cdot}$ on $\mbb{H}$ are given as follows: 
    \begin{align*}
        (\alpha+\beta \textbf{i}+\gamma \textbf{j}+\delta \textbf{k})^*&:=\alpha-\beta \textbf{i}-\gamma \textbf{j}-\delta \textbf{k} \\
        \norm{\alpha+\beta \textbf{i}+\gamma \textbf{j}+\delta \textbf{k}}&:=(\alpha^2+\beta^2+\gamma^2+\delta^2)^{\frac{1}{2}}.
    \end{align*}
    Now, if $\Omega$ is a compact Hausdorff topological space and '$-$' is a homeomorphism of $\Omega$ of period $2$, then the space
    \begin{align*}
         C(\Omega,-):=\{f:\Omega\to\mbb{C}: f \text{ is continuous and }f(\ol{w})=\ol{f(w)}\text{ for all }w\in \Omega\}
    \end{align*}
    is a unital commutative real $C^*$-algebra with involution defined by $f^*(w):=\ol{f(w)}=f(\ol{w})$ for all $w\in\Omega$. Moreover, every unital commutative real $C^*$-algebra is isomorphic to $C(\Omega,-)$ for some compact Hausdorff space $\Omega$  and a homeomorphism '$-$' of period 2 (c.f. \cite[Proposition 5.1.4]{Li03}). Note that if $-=id_\Omega$, the identity map on $\Omega$, then $C(\Omega,-)=C(\Omega,\mbb{R})$, the space of all real-valued continuous functions on $\Omega$. We let $C(\Omega)$ denote the space of all complex-valued continuous functions on $\Omega$. We refer to \cite{Li03,Goo82} for more details on real $C^*$-algebras. 

    Given any real vector space $\mcl{V}$, its complexification is denoted by $\mcl{V}_c=\mcl{V}+\hat{i}\mcl{V}$. If $T:\mcl{V}\to\mcl{W}$ is a linear map between real vector spaces, then its \textit{complexification} $T_c:\mcl{V}_c\to\mcl{W}_c$ is given as follows:
    \begin{align*}
     T_c(v_1+\hat{i}v_2):=T(v_1)+\hat{i}T(v_2),\qquad\forall~v_1,v_2\in\mcl{V}.
    \end{align*}
    If $\mcl{H}$ is a real Hilbert space, then its complexification $\mcl{H}_c:=\mcl{H}+\hat{i}~\mcl{H}$ is a complex Hilbert space with inner product given by
    \begin{align*}
     \ip{x_1+\hat{i}y_1,x_2+\hat{i}y_2}:=\ip{x_1,x_2}+\ip{y_1,y_2}+i(\ip{y_1,x_2}-\ip{x_1,y_2}).
    \end{align*}
    If $\{\mcl{H}_j\}_j$ is a family of real Hilbert spaces and $\mcl{H}=\bigoplus_j\mcl{H}_j$, then $\mcl{H}_c\cong\bigoplus_j(\mcl{H}_j)_c$ via a canonical isomorphism. Now, let $\mcl{A}$ be a real $C^*$-algebra. Then its complexification $\mcl{A}_c:=\mcl{A}+\hat{i}~\mcl{A}$ forms a $\ast$-algebra with addition, multiplication and involution $\ast$ given as follows: If $\lambda=s+it\in\mbb{C}$ and $a_j+\hat{i}b_j\in\mcl{A}_c$ for $j=1,2$, then 
    \begin{align*}
    \lambda(a_1+\hat{i}b_1)+(a_2+\hat{i}b_2):&=(sa_1-tb_1+a_2)+\hat{i}(sb_1+ta_1+b_2)\\
    (a_1+\hat{i}b_1)(a_2+\hat{i}b_2):&=(a_1a_2-b_1b_2)+\hat{i}(a_1b_2+b_1a_2)\\
    (a_1+\hat{i}b_1)^*:&= a_1^*-\hat{i}b_1^*.
    \end{align*}
    Furthermore, there exists a unique norm on $\mcl{A}_c$ with respect to which $\mcl{A}_c$ forms a complex $C^*$-algebra. This norm satisfies $\bnorm{a+\hat{i}0}=\norm{a}$ and $\bnorm{a+\hat{i}b}=\bnorm{a-\hat{i}b}$ for all $a,b\in\mcl{A}$. We consider $\mcl{A}\subseteq\mcl{A}_c$ by identifying $a\in\mcl{A}$ with $a+\hat{i}0\in\mcl{A}_c$. If $\mcl{A}=\B{\mcl{H}}$ for some real Hilbert space $\mcl{H}$, then $\mcl{A}_c\cong\B{\mcl{H}_c}$ via the $\ast$-isomorphism  $T_1+\hat{i}T_2\mapsto T$, where $T(x+\hat{i}y):=T_1(x)-T_2(y) + \hat{i}(T_1(y) + T_2(x))$ for all $T_1,T_2\in\B{\mcl{H}}$ and $x,y\in\mcl{H}$. In particular, if $\mcl{A}=M_n(\mbb{R})$, then $\mcl{A}_c\cong M_n(\mbb{C})$ in a natural way. 
    Now, suppose $\mcl{A}=C(\Omega,-)$ for some compact Hausdorff space $\Omega$ and a homeomorphism $-$ of $\Omega$ of period $2$. Then $\mcl{A}_c\cong C(\Omega)$ via the $\ast$-isomorphism  $f_1+\hat{i}f_2\mapsto f_1+if_2$; its inverse is given by $f\mapsto f_1+\hat{i}f_2$, where $f_1(w):=\frac{1}{2}(f(w)+\ol{f(\ol{\omega})})$ and $f_2(w):=\frac{1}{2i}(f(w)-\ol{f(\ol{w})})$ for all $w\in\Omega$. In particular, let $\Omega=\{1,2\}$ with $\ol{1}:=2$ and $\ol{2}:=1$. Then $C(\Omega,-)\cong\mbb{C}$ via the $\ast$-isomorphism  $f\mapsto f(1)$. Thus, if $\mcl{A}=\mbb{C}$, then from the above discussion, we have $\mcl{A}_c\cong \mbb{C}^2$ via the $\ast$-isomorphism $\lambda+\hat{i}\mu\mapsto(\lambda+i\mu,\ol{\lambda}+i\ol{\mu})$, where $\lambda,\mu\in\mbb{C}$; the inverse of this isomorphism is given by $(\lambda,\mu)\mapsto\frac{\lambda+\ol{\mu}}{2}+\hat{i}\frac{\lambda-\ol{\mu}}{2i}$. 

    Let $\mcl{A}$ be a unital real $C^*$-algebra and $\mcl{H}$ be a real Hilbert space. By a \textit{representation} of $\mcl{A}$ on $\mcl{H}$ we mean a unital $\ast$-homomorphism $\pi:\mcl{A}\to\B{\mcl{H}}$. In such a case, we let $\pi(\mcl{A})':=\{T\in\B{\mcl{H}}: T\pi(a)=\pi(a)T \text{ for all }a\in\mcl{A}\}$, and is called the commutant of $\pi$. A representation $\pi:\mcl{A}\to\B{\mcl{H}}$ is said to be \textit{irreducible} if $\mcl{H}$ does not have proper closed $\pi(\mcl{A})$-invariant subspaces. It is known (\cite{Li03},\cite{Ros16}) that if a representation $\pi:\mcl{A}\to\B{\mcl{H}}$ is irreducible, then $\pi(\mcl{A})'$ must be isomorphic to one of $\mbb{R},\mbb{C}$ or $\mbb{H}$. Following \cite{Ros16}, we say that an irreducible representation $\pi:\mcl{A}\to\B{\mcl{H}}$ is of \textit{real},  \textit{complex}, or \textit{quaternionic} type, depending on whether $\pi(\mcl{A})'$ is isomorphic to $\mbb{R,C}$ or $\mbb{H}$, respectively. Now, from \cite[Proposition 5.3.7]{Li03}, we have that $\pi$ is irreducible if and only if $\pi(\mcl{A})'\cap\B{\mcl{H}}_{sa}=\mbb{R}I_\mcl{H}$, where $\B{\mcl{H}}_{sa}$ is the set of self-adjoint operators on $\mcl{H}$. Note that if $\pi:\mcl{A}\to\B{\mcl{H}}$ is irreducible, then $\pi_c:\mcl{A}_c\to\B{\mcl{H}_c}$ is irreducible (i.e., $\pi_c(\mcl{A}_c)' = \mbb{C}I_{\mcl{H}_c}$) if and only if $\pi(\mcl{A})' = \mbb{R}I_{\mcl{H}}$ if and only if $\pi$ is of real type. 

    Let $\mcl{A}$ be a unital real $C^*$-algebra. An element $a\in\mcl{A}$ is said to be \textit{positive}, and write $a\geq 0$, if $a=b^*b$ for some $b\in\mcl{A}$. 
    Let $\mcl{H}$ be a real Hilbert space and $\Phi:\mcl{A}\to\B{\mcl{H}}$ be a linear map. Then $\Phi$ is said to be \textit{self-adjoint} if $\Phi(a^*)=\Phi(a)^*$ for all $a\in\mcl{A}$; \textit{positive} if $\Phi(a^*a)\geq 0$ for all $a\in\mcl{A}$; and it is said to be \textit{completely positive} (CP) if $\sum_{i,j=1}^nT_i^*\Phi(a_i^*a_j)T_j\geq 0$ for any finite subsets $\{a_j\}_{j=1}^n\subseteq\mcl{A}, \{T_j\}_{j=1}^n\subseteq\B{\mcl{H}}$ and $n\in\mbb{N}$. Observe that $\Phi$ is CP if and only if the map $\id_k\otimes\Phi:M_k(\mbb{R})\otimes \mcl{A}\to M_k(\mbb{R})\otimes\B{\mcl{H}}$ is a positive map for all $k\in\mbb{N}$, where $\id_k$ is the identity map on the matrix algebra $M_k(\mbb{R})$ of all $k\times k$ real matrices. Given a CP-map $\Phi : \mcl{A} \to \B{\mcl{H}}$ there exist a real Hilbert space $\mcl{K}$, a unital $\ast$-homomorphism $\pi : \mcl{A} \to \B{\mcl{K}}$ and a bounded linear operator $V \in \B{\mcl{H,K}}$ such that 
    $\Phi(a) = V^{*}\pi(a)V$ for all $a\in\mcl{A}$. Note that $\Phi$ is unital (i.e., $\Phi(1)=I$) if and only if $V$ is an isometry. Such a triple $(\mcl{K},\pi,V)$ is referred to as a \textit{Stinespring representation} of $\Phi$. We can always choose a triple that meets the \textit{minimality} condition, which is defined as $\mcl{K}=\cspan\{\pi(\mcl{A})V\mcl{H}\}$. Such a minimal Stinespring representation is unique up to unitary equivalence. See \cite{Sti55, Pau02, Ruan03} for details. Any unital self-adjoint positive linear functional $\phi:\mcl{A}\to\mbb{R}$ is called a \textit{real state}, and is necessarily a unital completely positive (UCP) map. In such cases, the operator $V:\mbb{R}\to\mcl{K}$ that appears in the Stinespring representation can be identified with the vector $z:=V(1)\in\mcl{K}$. Thus, the Stinespring representation  simplifies to the Gelfand-Naimark-Segal (GNS) construction $(\mcl{K},\pi,z)$ of $\phi$ (see \cite[Theorem 3.3.4]{Li03}). 
    
    We let $\mathrm{CP}(\mcl{A},\B{\mcl{H}})$ denote the space of all CP maps from $\mcl{A}$ to $\B{\mcl{H}}$.  Given CP-maps $\Phi,\Psi\in\mathrm{CP}(\mcl{A},\B{\mcl{H}})$ between real $C^*$-algebras, we say $\Psi$ is \it{dominated by} $\Phi$ (and write $\Psi\leq_{cp}\Phi$) if $\Phi-\Psi$ is also a CP-map. A CP map $\Phi:\mcl{A}\to\B{\mcl{H}}$ is said to be \it{pure} if the only CP maps $\Psi:\mcl{A}\to\B{\mcl{H}}$ for which $\Psi \leq_{cp} \Phi$ are those of the form $\Psi = t\Phi$ for some $t \in [0,1]$. The above definitions and results hold true even if $\mcl{A}$ is a complex $C^*$-algebra and $\mcl{H}$ is a complex Hilbert space. One can easily check that a linear map $\Phi:\mcl{A}\to\B{\mcl{H}}$ is CP if and only if its complexification  $\Phi_c:\mcl{A}_c\to\B{\mcl{H}}_c\cong\B{\mcl{H}_c}$ is CP; in particular, CP maps are necessarily self-adjoint (see \cite[Lemma 2.3]{BlTe21}). Furthermore, if $(\mcl{K},\pi,V)$ is a (minimal) Stinespring representation for $\Phi$, then $(\mcl{K}_c,\pi_c,V_c)$ is a (minimal) Stinespring representation for $\Phi_c$.

\begin{theorem}[\cite{Arv69}] \label{thm-radon-nikodym}
    Let $\Phi,\Psi:\mcl{A} \to \B{\mcl{H}}$ be CP maps, and let $(\mcl{K},\pi,V)$ be the minimal Stinespring representation of $\Phi$. Then the following hold:
    \begin{enumerate}[label=(\roman*)]
        \item $\Psi \leq_{cp} \Phi$ if and only if there exists a unique positive contraction $D \in \pi(\mcl{A})'$ such that $\Psi(\cdot) = V^*D\pi(\cdot)V$.
        \item $\Phi$ is pure if and only if $\pi$ is irreducible.
    \end{enumerate}  
\end{theorem}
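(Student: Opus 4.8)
The plan is to prove this as the real analogue of Arveson's Radon--Nikodym theorem, following the classical argument with the symmetric real inner product in place of the Hermitian one, and invoking the real-type irreducibility criterion recorded just before the theorem for part (ii).

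For the forward implication of (i), assume $\Psi\leq_{cp}\Phi$ and work on the dense subspace $\mcl{K}_0:=\lspan\{\pi(\mcl{A})V\mcl{H}\}$ of $\mcl{K}$. I would define a symmetric bilinear form on $\mcl{K}_0$ by
$$B\Big(\sum_i\pi(a_i)Vh_i,\sum_j\pi(b_j)Vk_j\Big):=\sum_{i,j}\ip{\Psi(b_j^*a_i)h_i,k_j}.$$
Symmetry follows from self-adjointness of $\Psi$ (CP maps are self-adjoint) together with the symmetry of the real inner product. The crucial estimate is $0\le B(\xi,\xi)\le\ip{\xi,\xi}$ for every $\xi=\sum_i\pi(a_i)Vh_i\in\mcl{K}_0$: the lower bound $B(\xi,\xi)\ge0$ is the positivity of the matrix $[\Psi(a_i^*a_j)]_{i,j}$ in $M_n(\B{\mcl{H}})$ guaranteed by complete positivity of $\Psi$, and the upper bound is the same statement for $\Phi-\Psi$, using $\ip{\xi,\xi}=\sum_{i,j}\ip{\Phi(a_j^*a_i)h_i,h_j}$ from the Stinespring form of $\Phi$. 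In particular $B$ is well defined (it annihilates the zero vector by Cauchy--Schwarz) and bounded, so the real Riesz representation theorem yields a unique positive contraction $D\in\B{\mcl{K}}$ with $B(\xi,\eta)=\ip{D\xi,\eta}$.

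Next I would verify the three remaining assertions of (i). That $D\in\pi(\mcl{A})'$ comes from the identity $\ip{D\pi(c)\pi(a)Vh,\pi(b)Vk}=\ip{\Psi(b^*ca)h,k}=\ip{\pi(c)D\pi(a)Vh,\pi(b)Vk}$, valid for all $a,b,c\in\mcl{A}$, which forces $D\pi(c)=\pi(c)D$ by density of $\mcl{K}_0$. The formula $\Psi(\cdot)=V^*D\pi(\cdot)V$ is read off by taking $\xi=\pi(a)Vh$ and $\eta=Vk$. Uniqueness follows since any $D'\in\pi(\mcl{A})'$ representing $\Psi$ must satisfy $\ip{D'\pi(a)Vh,\pi(b)Vk}=\ip{\Psi(b^*a)h,k}=\ip{D\pi(a)Vh,\pi(b)Vk}$, again by minimality. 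For the converse implication of (i), if $0\le D\le I$ lies in $\pi(\mcl{A})'$ then, since $D$ commutes with $\pi$, I can write $\Psi(\cdot)=(D^{1/2}V)^*\pi(\cdot)(D^{1/2}V)$ and $(\Phi-\Psi)(\cdot)=((I-D)^{1/2}V)^*\pi(\cdot)((I-D)^{1/2}V)$, exhibiting both as Stinespring forms, hence CP; thus $\Psi\leq_{cp}\Phi$. Part (ii) then follows from (i) combined with the fact that $\pi$ is irreducible precisely when $\pi(\mcl{A})'\cap\B{\mcl{K}}_{sa}=\mbb{R}I_{\mcl{K}}$: if $\pi$ is irreducible and $\Psi\leq_{cp}\Phi$, the associated $D$ is a self-adjoint element of $\pi(\mcl{A})'$, hence $D=tI$ for some $t\in[0,1]$, so $\Psi=t\Phi$ and $\Phi$ is pure; conversely, if $\pi$ is not irreducible there is a self-adjoint non-scalar $D_0\in\pi(\mcl{A})'$, and normalizing via continuous functional calculus to $D=(D_0-mI)/(M-m)$ with $m=\inf\sigma(D_0),M=\sup\sigma(D_0)$ keeps $D$ a non-scalar positive contraction in $\pi(\mcl{A})'$, so $\Psi:=V^*D\pi(\cdot)V\leq_{cp}\Phi$ by (i) yet $\Psi\neq t\Phi$ by the uniqueness clause, contradicting purity.

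The argument is essentially the complex proof transported verbatim; the only points requiring care over $\mbb{R}$ are that the Riesz theorem and the spectral/functional-calculus manipulations remain valid (self-adjoint operators on a real Hilbert space have real spectrum) and, most importantly, that real irreducibility is governed by the self-adjoint part of the commutant rather than the whole commutant. The latter is precisely what makes the implication that a self-adjoint $D\in\pi(\mcl{A})'$ must be a real scalar go through even when $\pi(\mcl{A})'$ is $\mbb{C}$ or $\mbb{H}$, so I expect this to be the main conceptual difference from the complex proof rather than a genuine obstacle.
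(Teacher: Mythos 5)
Your proposal is correct and is essentially the paper's own proof: the paper simply cites Arveson's complex-case argument and asserts that it transfers verbatim to the real setting, which is exactly the argument you have written out in detail. You also correctly isolate the one genuine real-case subtlety, namely that irreducibility of $\pi$ is characterized by $\pi(\mcl{A})'\cap\B{\mcl{K}}_{sa}=\mbb{R}I_{\mcl{K}}$ rather than by $\pi(\mcl{A})'$ being scalar, and your use of the self-adjointness of the Radon--Nikodym derivative $D$ makes part (ii) go through in all three cases $\mbb{R}$, $\mbb{C}$, $\mbb{H}$ for the commutant.
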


    Arveson (\cite{Arv69}) proved the above theorem in the complex case. However, the same proof also works in the real case.

\begin{remark}\label{rmk-pure-maps}
    Let $\mcl{A}$ be a unital real $C^*$-algebra, $\mcl{H}$ be a real Hilbert space and $\Phi:\mcl{A}\to\B{\mcl{H}}$ be a CP map. 
    \begin{enumerate}[label=(\roman*)]
        \item If $\Phi_c$ is pure, then $\Phi$ is pure. For, let $\Psi\in \mathrm{CP}(\mcl{A},\B{\mcl{H}})$ be such that $\Psi\leq_{cp}\Phi$. Then $\Phi_c - \Psi_c = (\Phi-\Psi)_c$ so that $\Psi_c \leq_{cp} \Phi_c$. Hence there exists  $t \in [0,1]$ such that $\Psi_c = t\Phi_c$, and which implies that $\Psi = t\Phi$. Hence $\Phi$ is pure. 
        \item If $\Phi$ is pure, then $\Phi_c$ need not be pure in general. For example, in the real $C^*$-algebra $\mcl{A} = \mbb{C}$, the only real state is $\phi:\mcl{A} \to \mbb{R}$ defined by $\phi(\lambda) := Re \lambda$ for all $\lambda\in\mcl{A}$. But its complexification $\phi_c : \mbb{C}^2 \to \mbb{C}$ given by 
        \begin{align*}
         \phi_c(\lambda,\mu) 
            &= \phi_c\left(\frac{\lambda+\ol{\mu}}{2}+\hat{i}\frac{\lambda-\ol{\mu}}{2i}\right) = \phi\left(\frac{\lambda + \ol{\mu}}{2}\right) + i\phi\left(\frac{\lambda - \ol{\mu}}{2i}\right)\\
            &= Re\left(\frac{\lambda + \ol{\mu}}{2}\right) + iRe\left(\frac{\lambda - \ol{\mu}}{2i}\right) = \frac{\lambda+\mu}{2},\qquad\forall~\lambda,\mu\in\mbb{C},
        \end{align*}
        is not a pure state. 
        \item Let $\Phi$ be pure and $(\mcl{K},\pi,V)$ be the minimal Stinespring representation of $\Phi$. Then $\Phi_c$ is pure if and only if $\pi_c$ is irreducible if and only if $\pi$ is of real type. 
    \end{enumerate}
\end{remark}

    Any irreducible representation of $M_n(\mbb{R})$ is unitarily equivalent to the identity map on $M_n(\mbb{R})$. So, from the above theorem, it follows that any pure UCP map $\Phi\in\mathrm{UCP}(M_n(\mbb{R}),M_m(\mbb{R}))$ is of the form $\Phi(\cdot)=V^*(\cdot)V$ for some isometry $V\in M_{n\times m}(\mbb{R})$. For such maps, the complexification is $\Phi_c(\cdot)=V^*(\cdot)V$, a pure UCP map, where $V$ is viewed as a complex matrix. Thus, by Remark \ref{rmk-pure-maps}, we conclude that $\Phi\in\mathrm{UCP}(M_n(\mbb{R}),M_m(\mbb{R}))$ is pure if and only if $\Phi_c\in\mathrm{UCP}(M_n(\mbb{C}),M_m(\mbb{C}))$ is pure.

\section{General properties of $C^*$-extreme points}\label{sec-Genproperty-3}
    In this section, unless stated otherwise, let $\mcl{A}$ denote a unital real $C^*$-algebra, and assume that all Hilbert spaces under consideration are over the real field. The letters $\mcl{H}$ and $\mcl{K}$, possibly with subscripts, will be used to denote these Hilbert spaces. For a given operator $T\in\B{\mcl{H}}$ we define $\mathrm{Ad}_T$ on $\B{\mcl{H}}$ by $\mathrm{Ad}_T(X):=T^*XT$ for all $X\in\B{\mcl{H}}$.  We denote by $\mathrm{UCP}(\mcl{A,\B{\mcl{H}}})$ the set of all unital CP-maps from $\mcl{A}$ to $\B{\mcl{H}}$. Note that $\mathrm{UCP}(\mcl{A,\B{\mcl{H}}})$ is a \it{$C^*$-convex} set in the following sense: If $\Phi_j \in \mathrm{UCP}(\mcl{A},\B{\mcl{H}})$ and $T_j\in\B{\mcl{H}}$ for $1\leq j \leq n$ satisfy $\sum_{j=1}^n T_j^*T_j = I$, then their \it{$C^*$-convex combination} $\sum_{j=1}^n\mathrm{Ad}_{T_j}\circ\Phi_j$ is also an element of $\mathrm{UCP}(\mcl{A},\B{\mcl{H}})$. This $C^*$-convex combination is called \textit{proper} if each $T_j$ is invertible. Now following \cite{LoPa81, FaMo97}, we define the following: 

\begin{definition}
    A real linear map $\Phi\in \mathrm{UCP}(\mcl{A},\B{\mcl{H}})$ is called a 
    \begin{enumerate}[label=(\roman*)]
     \item \it{linear extreme point} if, whenever $\Phi= \sum_{j=1}^n t_j\Phi_j$ for some $t_j\in (0,1)$ satisfying $\sum_{j=1}^n t_j = 1 $, then $\Phi_j=\Phi$ for all $1\leq j\leq n$.
     \item \it{$C^*$-extreme point} of $\mathrm{UCP}(\mcl{A},\B{\mcl{H}})$ if, whenever $\Phi= \sum_{j=1}^n\mathrm{Ad}_{T_j}\circ\Phi_j$ for some invertible $T_j\in\B{\mcl{H}}$ satisfying $ \sum_{j=1}^n T_j^*T_j = I$, then there exist unitaries $U_j\in\B{\mcl{H}}$ such that $\Phi_j= \mathrm{Ad}_{U_j}\circ\Phi$ (and we write $\Phi_j\sim\Phi$) for all $1\leq j\leq n$.
    \end{enumerate}
\end{definition}

    Note that in the definition of the $C^*$-extreme point (resp. linear extreme point), it suffices to consider proper $C^*$-convex (resp. convex) combinations of two UCP maps (see \cite[Proposition 2.1.2]{Zho98}). We denote the set of all linear extreme points and $C^*$-extreme extreme points of $\mathrm{UCP}(\mcl{A},\B{\mcl{H}})$ by $\mathrm{UCP}_{ext}(\mcl{A},\B{\mcl{H}})$ and $\mathrm{UCP}_{C^*-ext}(\mcl{A},\B{\mcl{H}})$, respectively. Clearly, $\Phi \in \mathrm{UCP}_{C^*-ext}(\mcl{A},\B{\mcl{H}}) $ if and only if $\mathrm{Ad}_U\circ\Phi \in \mathrm{UCP}_{C^*-ext}(\mcl{A},\B{\mcl{H}})$ for any unitary $U\in\B{\mcl{H}}$. These definitions and notations also apply to complex $C^*$-algebras $\mcl{A}$ and complex Hilbert spaces $\mcl{H}$. Note that if $\dim(\mcl{H})=1$, i.e., for (real) states on $\mcl{A}$, the notions of pure, linear extreme and $C^*$-extreme coincide. 

    Now, let $\Phi\in\mathrm{UCP}(\mcl{A},\B{\mcl{H}})$. We first investigate whether $\Phi$ being a $C^*$-extreme point of $\mathrm{UCP}(\mcl{A},\B{\mcl{H}})$ implies, or is implied by, $\Phi_c$ being a $C^*$-extreme point of $\mathrm{UCP}(\mcl{A}_c,\B{\mcl{H}_c})$. 

\begin{example}\label{eg-complexi-not-Cstar}
    In general, $\Phi\in\mathrm{UCP}_{C^*-ext}(\mcl{A},\B{\mcl{H}})$ does not imply that $\Phi_c$ is a $C^*$-extreme point of $\mathrm{UCP}(\mcl{A}_c,\B{\mcl{H}_c})$. For example, consider the map $\phi:\mbb{C}\to\mbb{R}=\B{\mbb{R}}$ defined as in Remark \ref{rmk-pure-maps}. Being a pure real state $\phi\in\mathrm{UCP}_{C^*-ext}(\mbb{C},\B{\mbb{R}})$. But its complexification $\phi_c:\mbb{C}^2\to\mbb{C}=\B{\mbb{C}}$ is not pure, and hence not a $C^*$-extreme point.     
\end{example}   

\begin{lemma}\label{lem-unitary-orthogonal} 
    Let $\Psi,\Phi \in \mathrm{UCP}(\mcl{A},M_n(\mbb{R}))$ and $U\in M_n(\mbb{C})$ be unitary such that $\Psi(\cdot)=U^*\Phi(\cdot)U$. Then there exists a unitary $V\in M_n(\mbb{R})$ such that $\Psi(\cdot)=V^*\Phi(\cdot)V$.
\end{lemma}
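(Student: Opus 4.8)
The plan is to reduce the problem to a real linear-algebra statement about two UCP maps that are unitarily equivalent over $\mbb{C}$. First I would set up the situation concretely: we have $\Psi,\Phi \in \mathrm{UCP}(\mcl{A},M_n(\mbb{R}))$ and a unitary $U \in M_n(\mbb{C})$ with $\Psi(a) = U^*\Phi(a)U$ for all $a \in \mcl{A}$. Writing $U = X + iY$ with $X,Y \in M_n(\mbb{R})$, the relation $U^*\Phi(a)U = \Psi(a)$ holds with everything sandwiched by complex matrices, but the left and right ends of the equation, $\Psi(a)$ and $\Phi(a)$, are real. The first move is to unpack the equation $\Phi(a)U = U\Psi(a)$ (equivalently $\Phi(a)(X+iY) = (X+iY)\Psi(a)$) into its real and imaginary parts: since $\Phi(a),\Psi(a),X,Y$ are all real, we get \emph{two} real intertwining relations, namely $\Phi(a)X = X\Psi(a)$ and $\Phi(a)Y = Y\Psi(a)$ for every $a \in \mcl{A}$.

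The key idea is then that any real linear combination $Z = \alpha X + \beta Y$ (with $\alpha,\beta\in\mbb{R}$) also intertwines: $\Phi(a)Z = Z\Psi(a)$ for all $a$. I want to produce from the family $\{\alpha X+\beta Y\}$ a single real \emph{invertible} (and ultimately unitary) intertwiner. Here I would use that $U = X+iY$ is invertible, so $X$ and $Y$ cannot simultaneously be singular on a common kernel; more precisely, the polynomial $p(\lambda) := \det(X + \lambda Y)$ in the real variable $\lambda$ is not identically zero, because $p(i) = \det(X+iY) = \det U \neq 0$ shows the associated complex polynomial does not vanish identically, hence the real polynomial $\lambda \mapsto \det(X+\lambda Y)$ has only finitely many real roots. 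Therefore I can choose $\lambda_0 \in \mbb{R}$ with $W := X + \lambda_0 Y$ invertible, and this real $W$ satisfies $\Phi(a)W = W\Psi(a)$, i.e.\ $\Psi(a) = W^{-1}\Phi(a)W$, for all $a$.

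At this stage I have a real \emph{invertible} intertwiner $W$, but I need a \emph{unitary} one to conclude $\Psi \sim \Phi$. The standard way to pass from an invertible intertwiner to a unitary one is via the polar decomposition $W = V|W|$, where $V \in M_n(\mbb{R})$ is a (real) orthogonal/unitary matrix and $|W| = (W^*W)^{1/2}$ is positive invertible. The point is that $|W|^2 = W^*W$ should be shown to commute with the image $\Psi(\mcl{A})$: from $\Phi(a)W = W\Psi(a)$ and its adjoint $W^*\Phi(a)^* = \Psi(a)^*W^*$, together with self-adjointness of CP maps ($\Phi(a^*) = \Phi(a)^*$, valid in the real case as noted in the preliminaries), one checks that $W^*W$ commutes with $\Psi(a)$ for every $a$. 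Consequently $|W| = (W^*W)^{1/2}$ also commutes with $\Psi(\mcl{A})$ (functional calculus preserves commutation), and then $\Psi(a) = W^{-1}\Phi(a)W = |W|^{-1}V^*\Phi(a)V|W|$ collapses, after moving $|W|$ through $\Psi(a)$, to $\Psi(a) = V^*\Phi(a)V$ with $V$ real unitary, as desired.

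I expect the main obstacle to be the last step, verifying that $W^*W$ (hence $|W|$) commutes with $\Psi(\mcl{A})$ so that the polar factor $|W|$ can be cancelled cleanly; this is where the self-adjointness of the CP maps and the careful bookkeeping of adjoints is essential, and it is the difference between merely having a similarity and having a genuine unitary equivalence. The earlier steps — splitting into real and imaginary intertwiners and finding an invertible real combination via the nonvanishing determinant polynomial — are routine, though one should be mildly careful that the chosen $\lambda_0$ exists (guaranteed since a nonzero polynomial has finitely many real roots). An alternative to the polar-decomposition argument, if one prefers, is to invoke a real-linear-algebra uniqueness statement directly, but the polar decomposition route is the most self-contained and keeps everything over $\mbb{R}$.
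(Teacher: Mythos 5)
Your proposal is correct and follows essentially the same route as the paper's proof: split $U$ into real and imaginary parts, use the nonvanishing of the determinant polynomial $\lambda\mapsto\det(X+\lambda Y)$ to extract a real invertible intertwiner $W$, then use the adjoint relation (via self-adjointness of CP maps) to show $W^*W$ commutes with $\Psi(\mcl{A})$ and cancel the positive factor in the polar decomposition $W=V|W|$. The only cosmetic difference is that you justify the nonvanishing of the polynomial explicitly by evaluating at $z=i$, which the paper leaves implicit.
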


\begin{proof}
    Given that $U\Psi(\cdot)= \Phi(\cdot)U$. Write $U=U_1+iU_2$ with $U_j\in M_n(\mbb{R})$, $j=1,2$. Then, we get $U_j\Psi(\cdot)= \Phi(\cdot)U_j$ for $j=1,2$. If any of the $U_j$ is unitary, we are done. If not, consider the non-zero polynomial $p:\mbb{C}\to\mbb{C}$ defined by $p(z) = \det(U_1+zU_2)$, which has only finitely many zeros. Choose and fix $\lambda\in\mbb{R}$ such that $S:= U_1+\lambda U_2$ is invertible. Note that $S\Psi(\cdot)=\Phi(\cdot)S$, and by taking adjoints on both sides, we get $\Psi(\cdot) S^*=S^*\Phi(\cdot)$. Let $S=VP$ be the polar decomposition of $S$, where $V\in M_n(\mbb{R})$ is a unitary and $P=(S^*S)^{\frac{1}{2}}\in M_n(\mathbb{R})$ is positive semidefinite. Observe that $\Psi(\cdot)S^*S = S^*\Phi(\cdot)S = S^*S\Psi(\cdot)$, which implies that $\Psi(\cdot)P=P\Psi(\cdot)$. Then, 
    \begin{align*}
        V\Psi(\cdot)P = VP\Psi(\cdot)=S\Psi(\cdot)=\Phi(\cdot)S=\Phi(\cdot)VP.
    \end{align*}
    Since $P$ is invertible, we get $V\Psi(\cdot)=\Phi(\cdot)V$, i.e.,  $\Psi=\mathrm{Ad}_V\circ\Phi$.
\end{proof}

\begin{proposition}\label{prop-Phi_c-Phi-C^*ext}
    Let $\Phi \in \mathrm{UCP}(\mcl{A},M_n(\mbb{R}))$ be given. If $\Phi_c\in \mathrm{UCP}_{C^*-ext}(\mcl{A}_c,M_n(\mbb{C}))$, then $\Phi\in\mathrm{UCP}_{C^*-ext}(\mcl{A},M_n(\mbb{R}))$. 
\end{proposition}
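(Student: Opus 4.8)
The plan is to lift the given real $C^*$-convex decomposition of $\Phi$ up to $\mcl{A}_c$, invoke the $C^*$-extremality of $\Phi_c$ there to obtain complex unitary equivalences, and then descend these back to real unitary equivalences via Lemma \ref{lem-unitary-orthogonal}. The only genuinely nontrivial ingredient is this last descent, which has been prepackaged as the lemma.

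First I would take a proper $C^*$-convex combination $\Phi = \sum_{j=1}^m \mathrm{Ad}_{T_j}\circ\Phi_j$ with invertible $T_j\in M_n(\mbb{R})$, $\sum_{j=1}^m T_j^*T_j = I$, and $\Phi_j\in\mathrm{UCP}(\mcl{A},M_n(\mbb{R}))$ (by the remark following the definition one may even restrict to $m=2$). Complexifying and using that complexification is additive and multiplicative, that is $\bigl(\sum_j \Psi_j\bigr)_c = \sum_j (\Psi_j)_c$ and $(\Psi\circ\Phi)_c = \Psi_c\circ\Phi_c$, together with the observation that for a real matrix $T_j$ the complexification of $\mathrm{Ad}_{T_j}$ on $M_n(\mbb{R})$ is again $\mathrm{Ad}_{T_j}$, now acting on $M_n(\mbb{C})$ (the adjoint $T_j^*$ being unaffected by the inclusion $M_n(\mbb{R})\subseteq M_n(\mbb{C})$ since $T_j$ has real entries), I obtain $\Phi_c = \sum_{j=1}^m \mathrm{Ad}_{T_j}\circ (\Phi_j)_c$. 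The relation $\sum_j T_j^*T_j = I$ and the invertibility of each $T_j$ persist in $M_n(\mbb{C})$, so this exhibits $\Phi_c$ as a proper $C^*$-convex combination in $\mathrm{UCP}(\mcl{A}_c,M_n(\mbb{C}))$.

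Since $\Phi_c$ is assumed to be a $C^*$-extreme point of $\mathrm{UCP}(\mcl{A}_c,M_n(\mbb{C}))$, there exist complex unitaries $W_j\in M_n(\mbb{C})$ with $(\Phi_j)_c = \mathrm{Ad}_{W_j}\circ\Phi_c$. Restricting this identity to $\mcl{A}\subseteq\mcl{A}_c$ and using $(\Phi_j)_c|_{\mcl{A}} = \Phi_j$ and $\Phi_c|_{\mcl{A}} = \Phi$, I get $\Phi_j(\cdot) = W_j^*\Phi(\cdot)W_j$ for each $j$, where $\Phi_j,\Phi$ take values in $M_n(\mbb{R})$ but $W_j$ is a priori only a complex unitary.

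The one step requiring real work is to replace each complex unitary $W_j$ by a real one, and this is exactly what Lemma \ref{lem-unitary-orthogonal} supplies: since $\Phi_j,\Phi\in\mathrm{UCP}(\mcl{A},M_n(\mbb{R}))$ and $\Phi_j(\cdot) = W_j^*\Phi(\cdot)W_j$, there is a real unitary $V_j\in M_n(\mbb{R})$ with $\Phi_j(\cdot) = V_j^*\Phi(\cdot)V_j$, i.e. $\Phi_j\sim\Phi$. As this holds for every $j$, the map $\Phi$ is a $C^*$-extreme point of $\mathrm{UCP}(\mcl{A},M_n(\mbb{R}))$. I expect the main (and essentially only) obstacle to be this real/complex descent; everything else is the routine bookkeeping that complexification carries proper real $C^*$-convex combinations to proper complex ones and that restriction to $\mcl{A}$ recovers the original maps.
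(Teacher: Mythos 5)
Your proposal is correct and follows essentially the same route as the paper's proof: complexify the proper $C^*$-convex decomposition, apply $C^*$-extremality of $\Phi_c$ to obtain complex unitaries, restrict to $\mcl{A}$, and invoke Lemma \ref{lem-unitary-orthogonal} to replace the complex unitaries by real ones. The only cosmetic difference is the direction in which you write the unitary equivalence, which is immaterial since the intertwining unitary can be replaced by its adjoint.
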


\begin{proof}
    Let $\Phi=\sum_{j=1}^2 \mathrm{Ad}_{T_j}\circ\Phi_j$ be a proper $C^*$-convex decomposition of $\Phi$. Then $\Phi_c=\sum_{j=1}^2 \mathrm{Ad}_{(T_j)_c}\circ(\Phi_j)_c$ is a proper $C^*$-convex decomposition of $\Phi_c$. By hypothesis, there exist unitaries $U_j \in M_n(\mbb{C})$ such that $\Phi_c = \mathrm{Ad}_{U_j}\circ(\Phi_j)_c$ for $j=1,2$. Considering $\mcl{A}\subseteq\mcl{A}_c$ via the canonical embedding, we get $\Phi(\cdot)=U_j^*\Phi_j(\cdot)U_j$. Then, by Lemma \ref{lem-unitary-orthogonal}, there exist unitary matrices $V_j \in M_n(\mbb{R})$ such that $\Phi=\mathrm{Ad}_{V_j}\circ\Phi_j$ for $j=1,2$.
\end{proof}    
    
    Many of the results discussed in the remainder of this section are known in the context of complex $C^*$-algebras. Since we are concerned with the real $C^*$-algebra counterparts of these results (with possibly minor modifications to the statements), we present them here. When the proofs from the complex case directly apply to the real case, we cite the relevant references for the complex case. If the proof does not transfer directly and requires technical adjustments, we either sketch or provide a full proof for completeness.

\begin{theorem}\label{thm-extreme-point-char}
    Let $\Phi\in\mathrm{UCP}(\mcl{A},\B{\mcl{H}})$ and $(\mcl{K},\pi,V)$ be its minimal Stinespring representation. Define $\mathscr{C} : \pi(\mcl{A})' \cap \B{\mcl{K}}_{sa} \to \B{\mcl{H}}$ by $\mathscr{C}(T) = V^*TV$ for all $T\in\B{\mcl{H}}$. Then $\Phi \in\mathrm{UCP}_{ext}(\mcl{A,\B{\mcl{H}}})$ if and only if $\mathscr{C}$ is an injection.
\end{theorem}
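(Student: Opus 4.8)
The plan is to exploit the Radon--Nikodym correspondence of Theorem~\ref{thm-radon-nikodym}(i), which identifies CP maps dominated by $\Phi$ with positive contractions in the commutant $\pi(\mcl{A})'$, together with the minimality of the Stinespring triple $(\mcl{K},\pi,V)$. By the remark following the definition of extreme points it suffices to test the linear extreme condition on proper convex combinations of two maps, so throughout I consider decompositions $\Phi = t\Phi_1 + (1-t)\Phi_2$ with $t\in(0,1)$ and $\Phi_1,\Phi_2\in\mathrm{UCP}(\mcl{A},\B{\mcl{H}})$. The map $\mathscr{C}$ should be read as recording the value $V^*DV = \Psi(1)$ of the dominated map $\Psi(\cdot)=V^*D\pi(\cdot)V$ at the identity, and the whole proof hinges on decoding this.

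For the direction ``$\mathscr{C}$ injective $\Rightarrow\Phi$ extreme'', given such a decomposition I first note $t\Phi_1\leq_{cp}\Phi$, so Theorem~\ref{thm-radon-nikodym}(i) yields a unique positive contraction $D\in\pi(\mcl{A})'$ (hence $D\in\pi(\mcl{A})'\cap\B{\mcl{K}}_{sa}$) with $t\Phi_1(\cdot)=V^*D\pi(\cdot)V$. Evaluating at $1$ and using that $\Phi_1$ is unital and $V$ is an isometry gives $\mathscr{C}(D)=V^*DV=t\Phi_1(1)=tI=tV^*V=\mathscr{C}(tI)$. Since $tI\in\pi(\mcl{A})'\cap\B{\mcl{K}}_{sa}$ as well, injectivity of $\mathscr{C}$ forces $D=tI$, whence $t\Phi_1(\cdot)=V^*(tI)\pi(\cdot)V=t\Phi(\cdot)$ and so $\Phi_1=\Phi$; applying the same reasoning to the complementary derivative $I-D$ gives $\Phi_2=\Phi$.

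For the converse I argue contrapositively. If $\mathscr{C}$ is not injective, there is a nonzero $T\in\pi(\mcl{A})'\cap\B{\mcl{K}}_{sa}$ with $V^*TV=0$; rescaling I may assume $\norm{T}\leq 1$, so that $I\pm T$ are positive elements of $\pi(\mcl{A})'$. Then $\Phi_{\pm}(\cdot):=V^*(I\pm T)\pi(\cdot)V$ are CP (each is of dominated form), and they are unital because $V^*(I\pm T)V=V^*V\pm V^*TV=I$; moreover $\tfrac12\Phi_+ + \tfrac12\Phi_- = V^*\pi(\cdot)V = \Phi$. To conclude that this is a nontrivial decomposition I must rule out $\Phi_+=\Phi$, i.e.\ show that $a\mapsto V^*T\pi(a)V$ does not vanish identically.

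This last non-vanishing claim is the main obstacle, and it is precisely where minimality enters. Suppose $V^*T\pi(a)V=0$ for all $a\in\mcl{A}$. For $a,b\in\mcl{A}$ and $x,y\in\mcl{H}$, using $T\in\pi(\mcl{A})'$ and $\pi(b)^*=\pi(b^*)$, I compute $\langle T\pi(a)Vx,\pi(b)Vy\rangle=\langle V^*T\pi(b^*a)Vx,y\rangle=0$. Because $\cspan\{\pi(\mcl{A})V\mcl{H}\}=\mcl{K}$ and $T$ is bounded, this forces $\langle T\xi,\eta\rangle=0$ for all $\xi,\eta\in\mcl{K}$, so $T=0$, contradicting $T\neq 0$. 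Hence $\Phi_+\neq\Phi$ and $\Phi$ is not a linear extreme point. I expect the transfer from the complex to the real setting to be routine here, since the Radon--Nikodym theorem and the minimal Stinespring construction are available verbatim over $\mbb{R}$ as recorded in the preliminaries; the only point requiring care is ensuring that $D$ and $T$ genuinely lie in the real self-adjoint commutant $\pi(\mcl{A})'\cap\B{\mcl{K}}_{sa}$, which is the domain of $\mathscr{C}$.
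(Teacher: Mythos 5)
Your proof is correct and is essentially the argument the paper relies on: the paper gives no proof of its own but cites Arveson's Radon--Nikodym-based argument and notes it carries over verbatim to the real case, which is exactly what you have reconstructed (dominated maps $t\Phi_1\leq_{cp}\Phi$ give $D$ with $\mathscr{C}(D)=tI$ in one direction, and a nonzero self-adjoint $T$ in $\ker\mathscr{C}$ yields the perturbation $V^*(I\pm T)\pi(\cdot)V$ in the other, with minimality forcing $V^*T\pi(\cdot)V\not\equiv 0$). No gaps; the density computation $\langle T\pi(a)Vx,\pi(b)Vy\rangle=\langle V^*T\pi(b^*a)Vx,y\rangle$ is the right use of minimality.
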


    The above theorem is the real case analogue of the abstract characterization of linear extreme points in the complex $C^*$-algebra case, due to Arveson (\cite[Theorem 1.4.6]{Arv69}), and the same proof works in the real $C^*$-algebra case as well.

\begin{lemma}\label{lem-compact-selfadj-comm}
    Let $\mcl{H}$ be a complex Hilbert space and $T_1, T_2 \in \B{\mcl{H}}$ be such that $T_1^*T_1$ and $T_2^*T_2$ are non-zero scalar multiples of the identity operator $I$ with $T_1^*T_1 + T_2^*T_2 = I$. 
    \begin{enumerate}[label=(\roman*)]
          \item If $S \in \B{\mcl{H}}$ is a compact self-adjoint operator such that $S = T_1^*ST_1 + T_2^*ST_2$, then $T_1S=ST_1$ and $T_2S=ST_2$.
          \item If $S \in \B{\mcl{H}}$ is a compact operator such that $S^* = -S$ and $S = T_1^*ST_1 + T_2^*ST_2$, then $T_1S=ST_1$ and $T_2S=ST_2$.
    \end{enumerate}
\end{lemma}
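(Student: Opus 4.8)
The plan is to reduce both statements to a single eigenvalue argument. First I would normalise: since $T_1^*T_1$ and $T_2^*T_2$ are nonzero scalar multiples of $I$ and sum to $I$, write $T_j^*T_j=\lambda_j I$ with $\lambda_j\in(0,1)$ and $\lambda_1+\lambda_2=1$, and set $U_j:=\lambda_j^{-1/2}T_j$, so that each $U_j$ is an isometry ($U_j^*U_j=I$) and the hypothesis $S=T_1^*ST_1+T_2^*ST_2$ becomes the convex relation $S=\lambda_1U_1^*SU_1+\lambda_2U_2^*SU_2$. Since $T_jS=ST_j$ is equivalent to $U_jS=SU_j$, it suffices to prove $U_jS=SU_j$. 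Moreover, part (ii) follows immediately from part (i): if $S^*=-S$ then $iS$ is compact and self-adjoint (we are on a complex Hilbert space) and satisfies the same relation, so applying (i) to $iS$ gives $U_j(iS)=(iS)U_j$, i.e.\ $U_jS=SU_j$. Thus the whole lemma rests on part (i).

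For part (i), the key step is a numerical-range argument at the top of the spectrum. Let $\mu=\max\sigma(S)$, which (replacing $S$ by $-S$ if necessary) we may take to be a nonzero eigenvalue, so that $\mu I-S\ge0$. Pick a unit eigenvector $x$ with $Sx=\mu x$. Because each $U_j$ is an isometry, $\|U_jx\|=1$ and hence $\langle SU_jx,U_jx\rangle\le\mu$; feeding the eigenvector into the convex relation gives $\mu=\langle Sx,x\rangle=\lambda_1\langle SU_1x,U_1x\rangle+\lambda_2\langle SU_2x,U_2x\rangle\le\mu$. Equality together with $\lambda_j>0$ forces $\langle(\mu I-S)U_jx,U_jx\rangle=0$, and positivity of $\mu I-S$ then yields $(\mu I-S)U_jx=0$, i.e.\ $U_jx$ again lies in the top eigenspace $E_\mu$. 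Thus $U_j$ maps $E_\mu$ into itself; as $E_\mu$ is finite-dimensional and $U_j$ is an isometry, $U_j|_{E_\mu}$ is unitary, from which one checks that $U_j^*$ also preserves $E_\mu$, and therefore both $U_j$ and $U_j^*$ preserve $E_\mu^{\perp}$.

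I would then iterate this peeling. Restricting the convex relation to the invariant subspace $E_\mu^{\perp}$ (legitimate precisely because both $U_j$ and $U_j^*$ preserve it) returns an identical relation for the compact self-adjoint operator $S|_{E_\mu^{\perp}}$, whose spectrum is that of $S$ with $\mu$ removed. Applying the same argument to the largest-modulus eigenvalue at each stage---taking $\max\sigma$ of the restriction, and running the symmetric argument on $-S$ to capture $\min\sigma$---I peel off the eigenspaces in order of decreasing modulus. Compactness guarantees that only finitely many eigenvalues exceed any given modulus, so every nonzero eigenspace is reached in finitely many steps; the remaining subspace is the kernel $E_0$, on which $S=0$. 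Since $U_j$ preserves each $E_\mu$ and $S$ acts as the scalar $\mu$ there, $U_jS=SU_j$ holds on the algebraic sum $\bigoplus_\mu E_\mu$, which is dense in $\mcl{H}$; boundedness of both sides then gives $U_jS=SU_j$ on all of $\mcl{H}$, and hence $T_jS=ST_j$.

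The main obstacle is that the numerical-range inequality only controls the extreme eigenvalues of $S$, so it cannot be applied directly to an interior eigenspace; the essential device is to organise the argument as a peeling induction ordered by decreasing modulus, and the point requiring care is the verification that the convex relation genuinely restricts to each successive orthogonal complement---this needs both $U_j$ and its adjoint $U_j^*$ to leave the already-peeled subspace invariant, which is exactly what the finite-dimensionality of the nonzero eigenspaces provides.
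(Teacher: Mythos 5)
Your reduction of part (ii) to part (i) via the substitution $T:=iS$ is exactly the paper's argument. For part (i) the paper gives no proof at all --- it simply cites \cite[Lemma 2.2.1]{Zho98} --- whereas you supply a complete argument, and it is correct: the normalisation to isometries $U_j=\lambda_j^{-1/2}T_j$, the numerical-range equality at $\mu=\max\sigma(S)$ forcing $(\mu I-S)^{1/2}U_jx=0$, the observation that $U_j$ maps the finite-dimensional eigenspace $E_\mu$ \emph{onto} itself (so that $U_j(E_\mu^\perp)\subseteq E_\mu^\perp$ follows from $U_j^*U_j=I$, giving block-diagonality of $U_j$ and hence of $U_j^*$), and the peeling induction ordered by decreasing modulus, terminating on $\ker S$ by compactness and concluding by density of $\bigoplus_\mu E_\mu$. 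This is essentially the standard argument behind the cited lemma, so in substance you have filled in the outsourced step rather than taken a different route; the only point worth flagging is that you correctly identified the one delicate verification (that the convex relation genuinely restricts to $E_\mu^\perp$), which is where a careless version of this proof would fail.
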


\begin{proof}
    The first part is exactly \cite[Lemma 2.2.1]{Zho98}. To prove the second part, note that $T:=iS$ is a compact self-adjoint operator. Also, $T= T_1^*TT_1 + T_2^*TT_2$. Then, by $(i)$, we have $T_1,T_2$ commutes with $T$ and therefore with $S$ as well.
\end{proof}
    
\begin{corollary}\label{cor-compact-selfadj-commut}
    Let $\mcl{H}$ be a real Hilbert space and $T_1,T_2\in\B{\mcl{H}}$ be such that $T_1^*T_1$ and $T_2^*T_2$ are non-zero scalar multiples of the identity operator, and $T_1^*T_1 + T_2^*T_2 = I$. If $S\in \B{\mcl{H}}$ is a compact self-adjoint (or compact anti-symmetric) operator such that $S = T_1^*ST_1 + T_2^*ST_2$, then $T_jS=ST_j$ for $j=1,2$.
\end{corollary}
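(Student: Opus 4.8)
The plan is to deduce this real statement directly from the complex Lemma \ref{lem-compact-selfadj-comm} by complexification. First I would pass to the complex Hilbert space $\mcl{H}_c$ and consider the complexified operators $(T_1)_c,(T_2)_c,S_c\in\B{\mcl{H}_c}$. Under the identification $\B{\mcl{H}}_c\cong\B{\mcl{H}_c}$ recalled in Section \ref{sec-Prelim}, the complexification map $\B{\mcl{H}}\ni T\mapsto T_c\in\B{\mcl{H}_c}$ is a unital injective $\ast$-homomorphism of real $\ast$-algebras; in particular it is multiplicative and satisfies $((T_j)_c)^*=(T_j^*)_c$, so that $(T_j)_c^*(T_j)_c=(T_j^*T_j)_c$. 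Since $T_j^*T_j$ is a nonzero real scalar multiple of $I_\mcl{H}$, its image is the same nonzero scalar multiple of $I_{\mcl{H}_c}$, and $T_1^*T_1+T_2^*T_2=I$ complexifies to $(T_1)_c^*(T_1)_c+(T_2)_c^*(T_2)_c=I_{\mcl{H}_c}$. Thus $(T_1)_c,(T_2)_c$ meet the hypotheses of Lemma \ref{lem-compact-selfadj-comm}.

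Next I would verify that $S_c$ inherits the remaining hypotheses. Compactness is a metric notion independent of the real/complex scalar structure: viewing $\mcl{H}_c$ as the real orthogonal direct sum $\mcl{H}\oplus\mcl{H}$, the operator $S_c$ corresponds to $S\oplus S$, which is compact precisely because $S$ is, so $S_c$ is compact. For the adjoint, if $S$ is self-adjoint then $S_c^*=(S^*)_c=S_c$, while if $S$ is anti-symmetric then $S_c^*=(S^*)_c=(-S)_c=-S_c$. Finally, applying complexification to the identity $S=T_1^*ST_1+T_2^*ST_2$ and using multiplicativity gives $S_c=(T_1)_c^*S_c(T_1)_c+(T_2)_c^*S_c(T_2)_c$.

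We are then exactly in the setting of Lemma \ref{lem-compact-selfadj-comm}: part $(i)$ applies when $S$ is self-adjoint and part $(ii)$ when $S$ is anti-symmetric, and in either case we obtain $(T_j)_cS_c=S_c(T_j)_c$ for $j=1,2$. By multiplicativity $(T_jS)_c=(T_j)_cS_c=S_c(T_j)_c=(ST_j)_c$, and since $T\mapsto T_c$ is injective we conclude $T_jS=ST_j$ for $j=1,2$. The argument presents no genuine obstacle; the only points deserving care are that complexification intertwines adjoints and preserves compactness, both routine but required in order to legitimately invoke the complex lemma in both the self-adjoint and anti-symmetric cases.
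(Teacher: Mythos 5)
Your argument is correct and is essentially the paper's own proof: both complexify $T_1,T_2,S$, check that the hypotheses of Lemma \ref{lem-compact-selfadj-comm} are inherited (including compactness and the self-adjoint/anti-symmetric dichotomy), apply that lemma to get $(T_j)_cS_c=S_c(T_j)_c$, and descend via injectivity of $T\mapsto T_c$. Your remarks on why $S_c$ is compact and why adjoints are intertwined are just slightly more explicit than the paper's.
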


\begin{proof}
    Since $(S_c)^* = (S^*)_c$, from the hypothesis it follows that $S_c\in\B{\mcl{H}_c}$ is a self-adjoint (or anti-symmetric) compact operator. Now, let $0 \neq \lambda_j \in \mbb{R}$ be such that $T_j^*T_j = \lambda_j I_{\mcl{H}}, j=1,2$. Then $ ({T_j})_c^*(T_j)_c=(T_j^*T_j)_c = \lambda_j I_{\mcl{H}_c}$. Also 
    \begin{align*}
        T_1^*T_1 + T_2^*T_2 = I_{\mcl{H}} &\implies (T_1)_c^*(T_1)_c + (T_2)_c^*(T_2)_c = I_{\mcl{H}_c}, \\
        S= T_1^*ST_1 + T_2^*ST_2 &\implies S_c = (T_1)_c^*S_c(T_1)_c + (T_2)_c^*S_c(T_2)_c.
    \end{align*}
    Then, by Lemma \ref{lem-compact-selfadj-comm}, we conclude that $S_c$ commutes with $(T_j)_c$ for $j=1,2$, and hence $S$ commutes with $T_j$ for $j=1,2$. 
\end{proof}

 
\begin{proposition}\label{prop-Cstar-ext-Lin-extr}
    Let $\Phi \in \mathrm{UCP}_{C^*-ext}(\mcl{A,B(H)})$. If $\Phi(\mcl{A}) \subseteq \K{\mcl{H}} + \mbb{R}\cdot I$, then $\Phi \in \mathrm{UCP}_{ext}(\mcl{A,B(H)})$. In particular, if $dim(\mcl{H}) < \infty$, then $\mathrm{UCP}_{C^*-ext}(\mcl{A,B(H)})\subseteq\mathrm{UCP}_{ext}(\mcl{A,B(H)})$.
\end{proposition}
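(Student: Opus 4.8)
The plan is to show that any proper convex decomposition of $\Phi$ must be trivial, by promoting it to a proper $C^*$-convex decomposition, invoking $C^*$-extremity to produce intertwining unitaries, and then using the hypothesis together with Corollary \ref{cor-compact-selfadj-commut} to force those unitaries to commute with the range of $\Phi$.

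First I would reduce to the two-map case: by the remark following the definition of $C^*$-extreme points it suffices to treat $\Phi = t\Phi_1 + (1-t)\Phi_2$ with $t\in(0,1)$ and $\Phi_1,\Phi_2\in\mathrm{UCP}(\mcl{A},\B{\mcl{H}})$, and show $\Phi_1=\Phi_2=\Phi$. Setting $T_j:=\sqrt{t_j}\,I$ (with $t_1=t$, $t_2=1-t$) gives invertible operators satisfying $T_1^*T_1+T_2^*T_2=I$ and $\Phi=\mathrm{Ad}_{T_1}\circ\Phi_1+\mathrm{Ad}_{T_2}\circ\Phi_2$, which is a proper $C^*$-convex combination. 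Since $\Phi$ is a $C^*$-extreme point, there exist unitaries $U_1,U_2\in\B{\mcl{H}}$ with $\Phi_j=\mathrm{Ad}_{U_j}\circ\Phi$, that is, $\Phi_j(a)=U_j^*\Phi(a)U_j$.

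Next I would substitute these back into $\Phi=t\Phi_1+(1-t)\Phi_2$ to obtain, for every $a\in\mcl{A}$,
\[
\Phi(a)=R_1^*\Phi(a)R_1+R_2^*\Phi(a)R_2,\qquad R_j:=\sqrt{t_j}\,U_j,
\]
where each $R_j^*R_j=t_jI$ is a nonzero scalar multiple of $I$ and $R_1^*R_1+R_2^*R_2=I$. Here the hypothesis enters: write $\Phi(a)=K+\lambda I$ with $K\in\K{\mcl{H}}$ and $\lambda\in\mbb{R}$. Because $R_1^*R_1+R_2^*R_2=I$, the scalar terms cancel and the displayed identity reduces to $K=R_1^*KR_1+R_2^*KR_2$. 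The averaging map $X\mapsto R_1^*XR_1+R_2^*XR_2$ commutes with taking adjoints, so both the symmetric part $\tfrac12(K+K^*)$ and the anti-symmetric part $\tfrac12(K-K^*)$, each compact, are again fixed by it. Applying Corollary \ref{cor-compact-selfadj-commut} to each part shows that $R_j$, hence $U_j$, commutes with both parts, so $U_j$ commutes with $K$ and therefore with $\Phi(a)=K+\lambda I$. Consequently $\Phi_j(a)=U_j^*\Phi(a)U_j=\Phi(a)$ for all $a$, giving $\Phi_j=\Phi$ and proving $\Phi\in\mathrm{UCP}_{ext}(\mcl{A},\B{\mcl{H}})$. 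The final assertion is then immediate, since for $\dim(\mcl{H})<\infty$ one has $\B{\mcl{H}}=\K{\mcl{H}}$, so the containment $\Phi(\mcl{A})\subseteq\K{\mcl{H}}+\mbb{R}\cdot I$ holds automatically.

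The step I expect to be the crux is the reduction to the compact self-adjoint and compact anti-symmetric operators to which Corollary \ref{cor-compact-selfadj-commut} applies: one must use the hypothesis to strip off the scalar part of $\Phi(a)$, and then exploit adjoint-equivariance of the averaging map to split the compact remainder into its symmetric and anti-symmetric pieces. This is precisely where the real setting requires \emph{both} the self-adjoint and the anti-symmetric versions of the corollary, in contrast to the complex case where a single self-adjoint statement suffices.
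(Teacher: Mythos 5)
Your proposal is correct and follows essentially the same route as the paper: promote the convex combination to a proper $C^*$-convex one via $T_j=\sqrt{t_j}\,I$, extract unitaries $U_j$ from $C^*$-extremity, strip the scalar part of $\Phi(a)$ using $R_1^*R_1+R_2^*R_2=I$, and apply Corollary \ref{cor-compact-selfadj-commut} to both the compact self-adjoint and compact anti-symmetric pieces. The only (immaterial) difference is that you split the compact remainder $K$ into its symmetric and anti-symmetric parts at the operator level, whereas the paper splits $a\in\mcl{A}$ into self-adjoint and anti-symmetric elements and uses that $\Phi$, being CP, is self-adjoint.
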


\begin{proof}
    Let $\Phi = t_1\Phi_1+t_2\Phi_2$ for some $t_1,t_2\in (0,1)$ with $t_1+t_2=1$ and $\Phi_j \in \mathrm{UCP}(\mcl{A,B(H)}), j=1,2$. Then there exist unitaries $U_j \in \B{\mcl{H}}$ such that $\Phi_j = \mathrm{Ad}_{U_j}\circ\Phi$ for $j=1,2$. Let $T \in \Phi(\mcl{A})$ be a self-adjoint (or anti-symmetric) operator. Define $T_j:=\sqrt{t_j}U_j$ for $j=1,2$. Then
    \begin{align*}
        T=T_1^*TT_1 + T_2^*TT_2,\qquad T_j^*T_j=t_jI,\qquad\mbox{and}\qquad \sum_{j=1}^2T_j^*T_j=I.
    \end{align*}
    By assumption, $T=S+\lambda I$ for some $S \in \K{\mcl{H}}$ and $\lambda\in \mbb{R}$. Then, from the above, it follows that 
    $S=T_1^*ST_1 + T_2^*ST_2$. Also, $T$ is self-adjoint (or anti-symmetric), which implies that $S$ is self-adjoint (or anti-symmetric). So, by Corollary \ref{cor-compact-selfadj-commut}, $T_j$ commutes with $S$ and hence with $T$. As a result, $U_j$ commutes with every $T \in \Phi(\mcl{A})$ that is either self-adjoint or anti-symmetric. Therefore, $U_j$ commutes with all operators in $\Phi(\mcl{A})$, implying that $\Psi_j = \mathrm{Ad}_{U_j}\circ\Phi=\Phi$ for $j=1,2$. Thus, $\Phi \in \mathrm{UCP}_{ext}(\mcl{A,B(H)})$.  
\end{proof}

\begin{theorem}[\cite{FaZh98, Zho98, BhKu22}]\label{thm-Cext-abstract-char}
    Let $\Phi\in\mathrm{UCP}(\mcl{A},\B{\mcl{H}})$ and $(\mcl{K},\pi,V)$ be its minimal Stinespring representation. Then the following are equivalent:
    \begin{enumerate}[label=(\roman*)]
        \item $\Phi\in\mathrm{UCP}_{C^*-ext}(\mcl{A},\B{\mcl{H}})$; 
        \item For any positive operator $D \in \pi(\mcl{A})'$ with $V^*DV$ invertible, there exist a partial isometry $U \in \pi(\mcl{A})'$ with $\ran{U^*}=\ran{U^*U}=\clran{D^{\frac{1}{2}}}$ and an invertible operator $Z\in\B{\mcl{H}}$ such that $UD^{\frac{1}{2}}V=VZ$;
        \item For any positive operator $D \in \pi(\mcl{A})'$ with $V^*DV$ invertible, there exists $S \in \pi(\mcl{A})'$ such that $V^*SV$ is invertible (i.e., $S(V\mcl{H})\subseteq V\mcl{H}$ and $S|_{V\mcl{H}}$ is invertible), $SVV^*=VV^*SVV^*$ and $D=S^*S$;
        \item For any $\Psi\in\mathrm{CP}(\mcl{A},\B{\mcl{H}})$ with $\Psi \leq_{cp} \Phi$ and $\Psi(1)$ invertible, there exists an invertible operator $Z \in \B{\mcl{H}}$ such that $\Psi = \mathrm{Ad}_Z\circ\Phi$.
    \end{enumerate}
\end{theorem}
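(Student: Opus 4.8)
The plan is to establish the block of equivalences by proving $(i)\Leftrightarrow(iv)$ directly and then closing the cycle $(iv)\Rightarrow(ii)\Rightarrow(iii)\Rightarrow(iv)$ among the three commutant-level conditions. The common bridge throughout is the Radon--Nikodym correspondence of Theorem~\ref{thm-radon-nikodym}(i): CP maps $\Psi\leq_{cp}\Phi$ are in bijection with positive contractions $D\in\pi(\mcl{A})'$ via $\Psi(\cdot)=V^*D\pi(\cdot)V$, under which ``$\Psi(1)$ invertible'' reads exactly ``$V^*DV$ invertible''. Since $(ii)$ and $(iii)$ quantify over arbitrary positive $D$ rather than contractions, I would first note that rescaling $D\mapsto D/\norm{D}$ and absorbing the scalar into $Z$ (resp.\ $S$) reduces to the contractive case where Radon--Nikodym applies. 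Throughout I will freely use that it suffices to test $C^*$-extremality against proper $C^*$-convex combinations of two UCP maps, and that minimal Stinespring representations are unique up to unitary equivalence, both recorded in Section~\ref{sec-Prelim}.

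For $(i)\Rightarrow(iv)$, given $\Psi\leq_{cp}\Phi$ with $A:=\Psi(1)$ invertible (so $0<A\leq I$), the naive splitting $\Phi=\Psi+(\Phi-\Psi)$ need not be \emph{proper}, since $(\Phi-\Psi)(1)=I-A$ may fail to be invertible. I would circumvent this by a scaling device: fix $s\in(0,1)$ and set $\Psi_s:=s\Psi$, so that $\Psi_s(1)=sA$ and $(\Phi-\Psi_s)(1)=I-sA\geq(1-s)I$ are both invertible. With $T_1:=(sA)^{1/2}$, $T_2:=(I-sA)^{1/2}$ (both invertible), $\Phi_1:=\mathrm{Ad}_{A^{-1/2}}\circ\Psi$ and $\Phi_2:=\mathrm{Ad}_{(I-sA)^{-1/2}}\circ(\Phi-\Psi_s)$, one checks that $\Phi=\mathrm{Ad}_{T_1}\circ\Phi_1+\mathrm{Ad}_{T_2}\circ\Phi_2$ is a proper decomposition into UCP maps in which the factor $s$ cancels, so $\Phi_1$ is independent of $s$. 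Then $C^*$-extremality yields a unitary $U$ with $\Phi_1=\mathrm{Ad}_U\circ\Phi$, whence $\Psi=\mathrm{Ad}_{A^{1/2}}\circ\Phi_1=\mathrm{Ad}_{UA^{1/2}}\circ\Phi$ and $Z:=UA^{1/2}$ is invertible. For $(iv)\Rightarrow(i)$, reduce to $\Phi=\mathrm{Ad}_{T_1}\circ\Phi_1+\mathrm{Ad}_{T_2}\circ\Phi_2$ with $T_j$ invertible; each $\Psi_j:=\mathrm{Ad}_{T_j}\circ\Phi_j$ satisfies $\Psi_j\leq_{cp}\Phi$ with $\Psi_j(1)=T_j^*T_j$ invertible, so $(iv)$ gives invertible $Z_j$ with $\Psi_j=\mathrm{Ad}_{Z_j}\circ\Phi$; then $W_j:=Z_jT_j^{-1}$ satisfies $\Phi_j=\mathrm{Ad}_{W_j}\circ\Phi$ and $W_j^*W_j=\Phi_j(1)=I$, so $W_j$ is an invertible isometry, hence unitary, giving $\Phi_j\sim\Phi$.

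For the commutant cycle, in $(iv)\Rightarrow(ii)$ I would take $D$ as given (contractive after scaling), form $\Psi:=V^*D\pi(\cdot)V$, and apply $(iv)$ to obtain an invertible $Z$ with $\Psi=\mathrm{Ad}_Z\circ\Phi$. Now $\Psi$ carries two Stinespring representations inside $\mcl{K}$: one with operator $D^{1/2}V$, whose minimal space is $\clran{D^{1/2}}=\ol{D^{1/2}\mcl{K}}$, and one with operator $VZ$, whose minimal space is all of $\mcl{K}$ (as $Z$ is invertible). Uniqueness of the minimal representation furnishes a unitary $U_0:\clran{D^{1/2}}\to\mcl{K}$ intertwining $\pi$ and sending $D^{1/2}V$ to $VZ$; extending by $0$ on $\ker{D}=(\clran{D^{1/2}})^\perp$ gives a partial isometry $U\in\pi(\mcl{A})'$ with $\ran{U^*}=\ran{U^*U}=\clran{D^{1/2}}$ and $UD^{1/2}V=VZ$, which is $(ii)$. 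The step $(ii)\Rightarrow(iii)$ is a short computation: set $S:=UD^{1/2}\in\pi(\mcl{A})'$, so that $S^*S=D^{1/2}(U^*U)D^{1/2}=D$, while $SV=UD^{1/2}V=VZ$ yields $V^*SV=Z$ invertible and $SVV^*=VZV^*=VV^*SVV^*$. Finally, for $(iii)\Rightarrow(iv)$, given $\Psi\leq_{cp}\Phi$ with $\Psi(1)$ invertible, Radon--Nikodym produces the associated $D$, to which $(iii)$ supplies an $S$ with $D=S^*S$; multiplying $SVV^*=VV^*SVV^*$ on the right by $V$ gives $SV=VV^*SV=VZ$ with $Z:=V^*SV$ invertible, and then $\Psi(\cdot)=V^*S^*S\pi(\cdot)V=(SV)^*\pi(\cdot)(SV)=(VZ)^*\pi(\cdot)(VZ)=\mathrm{Ad}_Z\circ\Phi(\cdot)$, using $S\in\pi(\mcl{A})'$.

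I expect the main obstacle to be the step $(iv)\Rightarrow(ii)$, specifically the honest construction of the partial isometry: one must verify that the unitary from Stinespring uniqueness, after extension by zero, both lands in the commutant $\pi(\mcl{A})'$ and realizes the precise range identities $\ran{U^*}=\ran{U^*U}=\clran{D^{1/2}}$; this hinges on $\clran{D^{1/2}}$ and $\ker{D}$ being $\pi$-invariant, which follows from $D\in\pi(\mcl{A})'$. A secondary real-specific point arises in $(ii)\Leftrightarrow(iii)$, where passing from $S$ to $U$ via polar decomposition requires that the partial isometry in the polar decomposition of an element of the real von Neumann algebra $\pi(\mcl{A})'$ again lies in $\pi(\mcl{A})'$; this remains valid for real von Neumann algebras but should be stated explicitly. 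All remaining manipulations transfer essentially verbatim from the complex case.
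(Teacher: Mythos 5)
Your proof is correct. The paper itself gives no argument for this theorem --- it is stated with citations to \cite{FaZh98, Zho98, BhKu22} under the blanket remark that the complex-case proofs transfer to the real setting --- and your write-up is exactly the standard chain of arguments from those sources, correctly checked over $\mbb{R}$: the scaling $\Psi\mapsto s\Psi$ to make the complementary summand proper in $(i)\Rightarrow(iv)$, the Radon--Nikodym theorem as the bridge to the commutant-level conditions, and the partial isometry in $(iv)\Rightarrow(ii)$ obtained from uniqueness of the minimal Stinespring representation of $\Psi$, realized both by $D^{\frac{1}{2}}V$ on $\clran{D^{\frac{1}{2}}}$ and by $VZ$ on all of $\mcl{K}$. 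The only remark worth making is that the polar-decomposition concern you flag for $(ii)\Leftrightarrow(iii)$ never actually arises in your cycle, since you only pass from $U$ to $S:=UD^{\frac{1}{2}}$ (the direction $(ii)\Rightarrow(iii)$) and recover $(ii)$ from $(iv)$ rather than from $(iii)$.
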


\begin{corollary}\label{cor-pure-and-homo}
    If $\Phi\in\mathrm{UCP}(\mcl{A},\B{\mcl{H}})$ is pure or multiplicative (i.e., $\ast$-homomorphism), then $\Phi\in \mathrm{UCP}_{C^*-ext}(\mcl{A},\B{\mcl{H}})$.     
\end{corollary}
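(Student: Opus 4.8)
The plan is to verify condition (iv) of Theorem \ref{thm-Cext-abstract-char} in each case, feeding it the Radon--Nikodym description of dominated CP maps from Theorem \ref{thm-radon-nikodym}. I would fix the minimal Stinespring representation $(\mcl{K},\pi,V)$ of $\Phi$ and take an arbitrary $\Psi\in\mathrm{CP}(\mcl{A},\B{\mcl{H}})$ with $\Psi\leq_{cp}\Phi$ and $\Psi(1)$ invertible. By Theorem \ref{thm-radon-nikodym}(i) there is a unique positive contraction $D\in\pi(\mcl{A})'$ with $\Psi(\cdot)=V^*D\pi(\cdot)V$, so the task in both cases reduces to producing an invertible $Z\in\B{\mcl{H}}$ with $\Psi=\mathrm{Ad}_Z\circ\Phi$.

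For the pure case, Theorem \ref{thm-radon-nikodym}(ii) tells us $\pi$ is irreducible, so the criterion recalled in Section \ref{sec-Prelim} gives $\pi(\mcl{A})'\cap\B{\mcl{K}}_{sa}=\mbb{R}I_\mcl{K}$. Since $D$ is positive and hence self-adjoint, it lies in this set, forcing $D=\lambda I_\mcl{K}$ for some $\lambda\in[0,1]$ and $\Psi=\lambda\Phi$. (Equivalently, one can read $\Psi=\lambda\Phi$ straight off the definition of purity.) Invertibility of $\Psi(1)=\lambda I$ then forces $\lambda>0$, and $Z=\sqrt{\lambda}\,I$ is the required invertible operator.

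For the multiplicative case, I would first observe that a unital $\ast$-homomorphism is its own minimal Stinespring representation: the triple $(\mcl{H},\Phi,I_\mcl{H})$ is Stinespring since $\Phi(a)=I^*\Phi(a)I$, and it is minimal because $\Phi(1)=I$ gives $\cspan\{\Phi(\mcl{A})\mcl{H}\}=\mcl{H}$. Hence $V=I_\mcl{H}$ and $\pi=\Phi$, so that $\Psi(\cdot)=D\Phi(\cdot)$ with $D\in\Phi(\mcl{A})'$ a positive contraction. Now $\Psi(1)=D$ is invertible, so $D$ is positive and invertible in $\Phi(\mcl{A})'$; setting $Z=D^{1/2}$, which again lies in $\Phi(\mcl{A})'$ by the continuous functional calculus and is invertible, gives $Z^*\Phi(\cdot)Z=D^{1/2}\Phi(\cdot)D^{1/2}=D\Phi(\cdot)=\Psi(\cdot)$.

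I do not expect a genuine obstacle here; given Theorems \ref{thm-radon-nikodym} and \ref{thm-Cext-abstract-char} the argument is essentially routine. The two points that most warrant care are the identification of $(\mcl{H},\Phi,I_\mcl{H})$ as the minimal Stinespring triple in the multiplicative case and the verification that $D^{1/2}$ stays in the commutant while remaining invertible. It is also worth flagging the real-$C^*$ subtlety in the pure case: over $\mbb{C}$ one would invoke $\pi(\mcl{A})'=\mbb{C}I$, whereas over $\mbb{R}$ an irreducible $\pi$ may have commutant $\mbb{R}$, $\mbb{C}$, or $\mbb{H}$. This causes no trouble because the Radon--Nikodym derivative $D$ is automatically self-adjoint, so only $\pi(\mcl{A})'\cap\B{\mcl{K}}_{sa}=\mbb{R}I$ is needed, which irreducibility guarantees independently of type.
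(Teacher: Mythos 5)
Your argument is correct, and for the pure case it is essentially the paper's own: the paper says the conclusion ``follows from the above theorem,'' meaning condition (iv) of Theorem \ref{thm-Cext-abstract-char}, which is exactly what you verify (and your remark that only $\pi(\mcl{A})'\cap\B{\mcl{K}}_{sa}=\mbb{R}I_{\mcl{K}}$ is needed --- so the real/complex/quaternionic type of the irreducible $\pi$ is irrelevant because the Radon--Nikodym derivative $D$ is self-adjoint --- is precisely the right point to flag in the real setting). For the multiplicative case, however, the paper takes a different route: it defers to the direct argument of \cite[Proposition 1.2]{FaMo97}, which works straight from the definition of a $C^*$-extreme point, starting from a proper $C^*$-convex decomposition $\Phi=\sum_j\mathrm{Ad}_{T_j}\circ\Phi_j$ and using multiplicativity, the Schwarz inequality, and polar decompositions of the $T_j$, without passing through the abstract characterization. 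You instead identify $(\mcl{H},\Phi,I_{\mcl{H}})$ as the minimal Stinespring triple of a unital $\ast$-homomorphism and verify condition (iv) via Theorem \ref{thm-radon-nikodym}; the computation $\mathrm{Ad}_{D^{1/2}}\circ\Phi=D\Phi(\cdot)=\Psi(\cdot)$ with $D^{1/2}\in\Phi(\mcl{A})'$ positive and invertible is exactly what (iv) demands, and the square root stays in the commutant by real continuous functional calculus. Your approach is shorter and treats both cases uniformly, at the cost of leaning on the full equivalence (i)$\Leftrightarrow$(iv) of Theorem \ref{thm-Cext-abstract-char}, whose real-case proof the paper itself only cites; the FaMo97 route is self-contained at the level of the definition. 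Both are sound.
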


\begin{proof}
    If $\Phi$ is pure, then the conclusion follows from the above theorem. Now, if $\Phi$ is a $\ast$-homomorphism, then as in \cite[Proposition 1.2]{FaMo97} we conclude that $\Phi$ is a $C^*$-extreme point. 
\end{proof}


\begin{definition}
    Let $\phi:\mcl{A}\to\mbb{R}$ be a real state. The map $\Phi : \mcl{A} \to \B{\mcl{H}}$ defined by 
    \begin{align*}
         \Phi(a):=\phi(a)I_{\mcl{H}},\qquad\forall~a\in\mcl{A},
    \end{align*}
    is called the \it{inflation} of $\phi$ on $\mcl{H}$.  
\end{definition}

\begin{proposition}\label{prop-state-inflation}
    Let $\phi:\mcl{A}\to\mbb{R}$ be a pure real state and let $(\mcl{K},\pi,z)$ be its minimal GNS representation with the irreducible representation $\pi$ is of real type. If $\Phi$ is the inflation of $\phi$ on $\mcl{H}$, then $\Phi\in\mathrm{UCP}_{C^*-ext}(\mcl{A},\B{\mcl{H}})$.
\end{proposition}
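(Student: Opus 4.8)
The plan is to verify condition (iv) of Theorem \ref{thm-Cext-abstract-char}, and for that I first need an explicit minimal Stinespring representation of the inflation $\Phi$. Since $(\mcl{K},\pi,z)$ is the GNS representation of $\phi$, I would set $\tilde{\mcl{K}}:=\mcl{K}\otimes\mcl{H}$, $\tilde\pi:=\pi\otimes I_{\mcl{H}}$ and $\tilde V\colon\mcl{H}\to\tilde{\mcl{K}}$, $\tilde V h:=z\otimes h$. A direct computation then gives $\tilde V^*\tilde\pi(a)\tilde V h=\langle z,\pi(a)z\rangle h=\phi(a)h$, so that $\tilde V^*\tilde\pi(\cdot)\tilde V=\phi(\cdot)I_{\mcl{H}}=\Phi$. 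Since $\phi(1)=\norm{z}^2=1$, the map $\tilde V$ is an isometry, and minimality of the GNS triple (i.e.\ $\cspan\{\pi(\mcl{A})z\}=\mcl{K}$) yields $\cspan\{\tilde\pi(\mcl{A})\tilde V\mcl{H}\}=\cspan\{\pi(\mcl{A})z\}\otimes\mcl{H}=\mcl{K}\otimes\mcl{H}$. Hence $(\tilde{\mcl{K}},\tilde\pi,\tilde V)$ is the minimal Stinespring representation of $\Phi$.

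The crucial step is to compute the commutant $\tilde\pi(\mcl{A})'=(\pi\otimes I_{\mcl{H}})(\mcl{A})'$ and to show that it equals $I_{\mcl{K}}\otimes\B{\mcl{H}}$. This is exactly where the hypothesis that $\pi$ is of real type enters: real type means $\pi(\mcl{A})'=\mbb{R}I_{\mcl{K}}$, and hence $\pi(\mcl{A})''=(\mbb{R}I_{\mcl{K}})'=\B{\mcl{K}}$. Consequently $\tilde\pi(\mcl{A})'=(\B{\mcl{K}}\otimes I_{\mcl{H}})'$, and the commutation theorem for tensor products of (real) von Neumann algebras, or a direct argument using rank-one matrix units $\ranko{\xi}{\eta}\otimes I_{\mcl{H}}$ in $\B{\mcl{K}}\otimes I_{\mcl{H}}$, gives $(\B{\mcl{K}}\otimes I_{\mcl{H}})'=I_{\mcl{K}}\otimes\B{\mcl{H}}$. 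I expect this commutant identification to be the main obstacle, precisely because it is the place where the real-type assumption is indispensable: for $\pi$ of complex or quaternionic type one would instead have $\pi(\mcl{A})'\cong\mbb{C}$ or $\mbb{H}$, the commutant of $\tilde\pi(\mcl{A})$ would be strictly larger than $I_{\mcl{K}}\otimes\B{\mcl{H}}$, and the argument below would break down.

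With the commutant in hand I would verify Theorem \ref{thm-Cext-abstract-char}(iv). Let $\Psi\in\mathrm{CP}(\mcl{A},\B{\mcl{H}})$ satisfy $\Psi\leq_{cp}\Phi$ with $\Psi(1)$ invertible. By the Radon--Nikodym theorem (Theorem \ref{thm-radon-nikodym}(i)) there is a positive contraction $D\in\tilde\pi(\mcl{A})'$ with $\Psi(\cdot)=\tilde V^*D\tilde\pi(\cdot)\tilde V$. By the commutant computation, $D=I_{\mcl{K}}\otimes D_0$ for some positive contraction $D_0\in\B{\mcl{H}}$, and the same computation as above gives, for every $a\in\mcl{A}$,
\begin{align*}
 \Psi(a)=\tilde V^*(I_{\mcl{K}}\otimes D_0)\tilde\pi(a)\tilde V=\langle z,\pi(a)z\rangle D_0=\phi(a)D_0.
\end{align*}
In particular $D_0=\Psi(1)$ is positive and invertible, so $Z:=D_0^{\frac{1}{2}}\in\B{\mcl{H}}$ is invertible with $Z^*Z=D_0$, and $\mathrm{Ad}_Z\circ\Phi(a)=Z^*\phi(a)I_{\mcl{H}}Z=\phi(a)D_0=\Psi(a)$, i.e.\ $\Psi=\mathrm{Ad}_Z\circ\Phi$. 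This establishes condition (iv), and therefore $\Phi\in\mathrm{UCP}_{C^*-ext}(\mcl{A},\B{\mcl{H}})$.

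I would emphasize that, unlike the complexification route through Proposition \ref{prop-Phi_c-Phi-C^*ext}, this argument never passes to matrices and so works for an arbitrary, possibly infinite-dimensional, real Hilbert space $\mcl{H}$; the only genuinely real input is the commutant computation driven by the real-type hypothesis, which is what replaces the automatic $C^*$-extremality of state inflations available in the complex case.
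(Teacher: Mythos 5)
Your proposal is correct and follows essentially the same route as the paper's proof: both identify the minimal Stinespring representation of the inflation as an amplification of the GNS representation, use the real-type hypothesis to compute the commutant of the amplified representation (the paper cites \cite[Theorem 4.4.3]{Li03} where you argue via the double commutant and matrix units), apply the Radon--Nikodym theorem to write the dominated map as $\phi(\cdot)D_0$, and conclude via condition (iv) of Theorem \ref{thm-Cext-abstract-char}. The only differences are cosmetic (order of tensor factors, explicit verification of minimality).
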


\begin{proof}
    Define $\widetilde{\pi}:\mcl{A}\to\B{\mcl{H}\otimes\mcl{K}}$ and $\widetilde{V}:\mcl{H}\otimes\mbb{R}\to\mcl{H}\otimes\mcl{K}$ by
    \begin{align*}
        \widetilde{\pi}(\cdot):=I_{\mcl{H}}\otimes\pi(\cdot)
        \qquad\text{and}\qquad
        \widetilde{V}:=I_{\mcl{H}}\otimes V,
    \end{align*}
    where $V(\lambda):=\lambda z$ for all $\lambda\in\mbb{R}$. We identify $\mcl{H}=\mcl{H}\otimes\mbb{R}$ via the isomorphism $x\mapsto x\otimes 1$, and observe that $\Phi=\mathrm{Ad}_{\widetilde{V}}\circ\widetilde{\pi}$. Thus, $(\mcl{H}\otimes\mcl{K},\widetilde{\pi},\widetilde{V})$ is the minimal Stinespring representation of $\Phi$. Now, let $\Psi\in\mathrm{CP}(\mcl{A},\B{\mcl{H}})$ be such that $\Psi\leq_{cp}\Phi$ and $\Psi(1)$ invertible.  Then, by Theorem \ref{thm-radon-nikodym}, there exists a positive contraction $\widetilde{D}\in {\widetilde{\pi}}(\mcl{A})'\subseteq\B{\mcl{H}\otimes\mcl{K}}$ such that $\Psi(\cdot)=\widetilde{V}^*\widetilde{D}\widetilde{\pi}(\cdot)\widetilde{V}$. Since $\pi(\mcl{A})' = \mbb{R} I_{\mcl{K}}$, from \cite[Theorem 4.4.3]{Li03}, it follows that $\widetilde{\pi}(\mcl{A})'=\B{\mcl{H}} \otimes I_{\mcl{K}}$.
    So, there exists $D\in\B{\mcl{H}}$ such that $\widetilde{D}= D\otimes I_{\mcl{K}}$. Now, $0 \leq \widetilde{D} \leq I_{\mcl{H}\otimes\mcl{K}}$ implies $0\leq D \leq I_{\mcl{H}}$. Therefore,
    \begin{align*}
        \Psi(a)
            &=(I_{\mcl{H}}\otimes V)^*(D\otimes I_\mcl{K})(I_\mcl{H}\otimes\pi(a))(I_{\mcl{H}}\otimes V)\\
            &= D \otimes (V^*\pi(a)V) 
             = D \otimes \phi(a) 
            = \phi(a)D\qquad\qquad\qquad(\because~\mcl{H}=\mcl{H}\otimes\mbb{R})\\
            &= \mathrm{Ad}_{D^{\frac{1}{2}}}\circ \Phi(a), \qquad\forall~a\in\mcl{A},
    \end{align*}
       where $D^{\frac{1}{2}}$ is invertible as $D=\Psi(1)$ is invertible.  Hence, by Theorem \ref{thm-Cext-abstract-char}, we conclude that $\Phi \in \mathrm{UCP}_{C^*-ext}(\mcl{A},\B{\mcl{H}})$.
\end{proof}

    Since every irreducible representation of a matrix algebra $M_n(\mbb{R})$ is unitarily equivalent to the identity map, which is of real type, we have the following. 

\begin{corollary}\label{cor-state-inflation-matrix}
    Let $\phi:M_n(\mbb{R})\to\mbb{R}$ be a pure real state and $\mcl{H}$ be a real Hilbert space. Then $\Phi(\cdot):=\phi(\cdot)I_{\mcl{H}}$ is a $C^*$-extreme point of $\mathrm{UCP}(M_n(\mbb{R}),\B{\mcl{H}})$.
\end{corollary}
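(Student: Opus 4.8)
The plan is to recognize this as an immediate consequence of Proposition \ref{prop-state-inflation}, whose sole hypothesis---that the irreducible representation appearing in the GNS construction of $\phi$ is of real type---is automatically satisfied when the domain algebra is the full matrix algebra $M_n(\mbb{R})$. Thus the entire task reduces to verifying that hypothesis, after which the proposition applies verbatim.

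First I would let $(\mcl{K},\pi,z)$ denote the minimal GNS representation of $\phi$. Since $\phi$ is a pure real state, we may view it as a pure UCP map into $\B{\mbb{R}}=\mbb{R}$ (recall that for $\dim(\mcl{H})=1$ the notions of pure, linear extreme, and $C^*$-extreme coincide), and then Theorem \ref{thm-radon-nikodym}(ii) gives that $\pi$ is irreducible.

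Next I would invoke the classification of irreducible representations of $M_n(\mbb{R})$, recalled earlier: every irreducible representation of $M_n(\mbb{R})$ is unitarily equivalent to the identity representation $\mathrm{id}:M_n(\mbb{R})\to\B{\mbb{R}^n}$. The identity representation has commutant $\mathrm{id}(M_n(\mbb{R}))'=\mbb{R}I_n$, since any matrix commuting with all of $M_n(\mbb{R})$ must be a scalar; hence $\mathrm{id}$ is of real type. Because the real/complex/quaternionic type of an irreducible representation is an invariant of its unitary equivalence class---if $\pi=\mathrm{Ad}_U\circ\mathrm{id}$ for a unitary $U$, then $\pi(M_n(\mbb{R}))'=U^*\,\mathrm{id}(M_n(\mbb{R}))'\,U=\mbb{R}I$---the representation $\pi$ is itself of real type.

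Finally, with the hypothesis of Proposition \ref{prop-state-inflation} confirmed, that proposition applies directly and yields $\Phi\in\mathrm{UCP}_{C^*-ext}(M_n(\mbb{R}),\B{\mcl{H}})$. I expect essentially no obstacle here beyond checking the real-type condition; the substantive work---the identification of $\widetilde{\pi}(M_n(\mbb{R}))'$ as $\B{\mcl{H}}\otimes I_{\mcl{K}}$ and the resulting factorization $\Psi=\mathrm{Ad}_{D^{\frac{1}{2}}}\circ\Phi$ for dominated maps---has already been carried out in the proof of Proposition \ref{prop-state-inflation}.
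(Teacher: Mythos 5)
Your proposal is correct and follows exactly the paper's route: the corollary is deduced from Proposition \ref{prop-state-inflation} by observing that every irreducible representation of $M_n(\mbb{R})$ is unitarily equivalent to the identity representation, which is of real type. You simply spell out the verification (irreducibility of the GNS representation via Theorem \ref{thm-radon-nikodym}(ii) and invariance of the type under unitary equivalence) that the paper leaves implicit in the one-sentence remark preceding the corollary.
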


    In contrast to the complex $C^*$-algebra case, the inflation of a pure real state is not necessarily a $C^*$-extreme point.

\begin{example}\label{eg-inflation-complex}
    Consider the pure real state $\phi:\mathbb{C} \to \mathbb{R}$ given by $\phi(\lambda):=Re(\lambda)$ for all $\lambda\in\mbb{C}$. Let $\Phi:\mathbb{C}\to\B{\mathbb{R}^2}=M_2(\mathbb{R})$ be the inflation of $\phi$, i.e., 
    \begin{align*}
         \Phi(\lambda) = \Matrix{Re(\lambda) & 0 \\0 & Re(\lambda)},\qquad\forall~\lambda\in\mbb{C}.
    \end{align*}
    Observe that $\Phi(\lambda)=\frac{1}{2}\Phi_1(\lambda) + \frac{1}{2}\Phi_2(\lambda)$,  where $\Phi_1,\Phi_2:\mbb{C}\to M_2(\mathbb{R})$ are the UCP maps (in fact, $\ast$-homomorphisms) given by 
    \begin{align*}
        \Phi_1(\lambda)=\Matrix{Re(\lambda) & Im(\lambda) \\
          -Im(\lambda) & Re(\lambda)}
        \qquad\mbox{and}\qquad
        \Phi_2(\lambda)=\Matrix{Re(\lambda) & -Im(\lambda) \\
        Im(\lambda) & Re(\lambda)}.
    \end{align*}       
    Thus, $\Phi$ is not a linear extreme and hence not a $C^*$-extreme either. Note that $\phi$ has the minimal Stinespring representation $(\mbb{R}^2,\pi,V)$, where $V:=\big[1~~0\big]^*\in M_{2\times 1}(\mbb{R})$ and the representation $\pi:=\Phi_1$ is of complex type. (Given an $m\times n$ real matrix $A$, we use $A^*$ to denote the transpose of $A$.)
\end{example}

\begin{example}\label{eg-inflation-quaternion}
    Let $\Phi:\mathbb{H}\to\B{\mathbb{R}^2}=M_2(\mathbb{R})$ be the inflation of the pure real state $\phi:\mathbb{H} \to \mathbb{R}$ given by $\phi(\alpha+\beta\textbf{i}+\gamma\textbf{j}+\delta\textbf{k}):=\alpha$, that is, 
    \begin{align*}
         \Phi(\alpha+\beta\textbf{i}+\gamma\textbf{j}+\delta\textbf{k}) = \Matrix{\alpha & 0 \\0 &\alpha}.
    \end{align*}
    Observe that $\Phi=\frac{1}{2}\Phi_1 + \frac{1}{2}\Phi_2$,  where $\Phi_1,\Phi_2:\mbb{H}\to M_2(\mathbb{R})$ are the UCP maps given by 
    \begin{align*}
        \Phi_1(\alpha+\beta\textbf{i}+\gamma\textbf{j}+\delta\textbf{k}):=\Matrix{\alpha & \beta \\-\beta & \alpha}
        \qquad\mbox{and}\qquad
        \Phi_2(\alpha+\beta\textbf{i}+\gamma\textbf{j}+\delta\textbf{k}):=\Matrix{\alpha & -\beta \\ \beta & \alpha}.
    \end{align*}       
    Thus, $\Phi$ is not a linear extreme and hence not a $C^*$-extreme either. Note that $\phi$ has the minimal Stinespring representation $(\mbb{R}^4,\pi,V)$, where $V=\big[1~0~0~0\big]^*$ and $\pi:\mbb{H}\to\B{\mbb{R}^4}=M_4(\mbb{R})$ is given by 
    \begin{align}\label{eq-piH}
        \pi(\alpha+\beta\textbf{i}+\gamma\textbf{j}+\delta\textbf{k}):=\Matrix{\alpha&-\beta&-\gamma&-\delta\\
                   \beta&\alpha&-\delta&\gamma\\
                   \gamma&\delta&\alpha&-\beta\\
                   \delta&-\gamma&\beta&\alpha
                  },
    \end{align}
    which is an irreducible representation of quaternionic type. 
\end{example}

\begin{notation}
    Given a countable family $\{\Phi_j\}_j$ of UCP maps $\Phi_j:\mcl{A}\to\B{\mcl{H}_j}$ we define $\bigoplus_j\Phi_j:\mcl{A}\to\B{\bigoplus_j\mcl{H}_j}$ by
    \begin{align*}
        (\bigoplus_j\Phi_j)(a):=\bigoplus_j\Phi_j(a),\qquad\forall~a\in\mcl{A},
    \end{align*}
    which is again a UCP map. The above examples also illustrate that the direct sum of $C^*$-extreme points need not be a $C^*$-extreme point.
\end{notation}    

\begin{proposition}\label{prop-CompExtremepoint} 
    For each $j=1,2$, let $\mcl{H}_j$ be a finite-dimensional and $\Phi_j\in\mathrm{UCP}(\mcl{A},\B{\mcl{H}_j})$. If $\Phi_1\oplus\Phi_2\in\mathrm{UCP}_{C^*-ext}(\mcl{A},\B{\mcl{H}_1\oplus\mcl{H}_2})$, then $\Phi_j\in \mathrm{UCP}_{C^*-ext}(\mcl{A},\B{\mcl{H}_j})$ for $j=1,2$.
\end{proposition}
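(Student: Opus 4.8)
The plan is to verify $C^*$-extremality of $\Phi_1$ directly from the definition: take an arbitrary proper $C^*$-convex decomposition of $\Phi_1$ (it suffices to consider two summands, as noted after the definition), pad it with $\Phi_2$ to obtain a proper $C^*$-convex decomposition of $\Phi_1\oplus\Phi_2$, invoke the hypothesis that $\Phi_1\oplus\Phi_2$ is $C^*$-extreme, and finally \emph{descend} the resulting unitary equivalences back to $\mcl{H}_1$. Concretely, suppose $\Phi_1=\mathrm{Ad}_{A_1}\circ\Psi_1+\mathrm{Ad}_{A_2}\circ\Psi_2$ with $A_i\in\B{\mcl{H}_1}$ invertible and $A_1^*A_1+A_2^*A_2=I_{\mcl{H}_1}$. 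Setting $\widetilde{A}_i:=A_i\oplus\frac{1}{\sqrt{2}}I_{\mcl{H}_2}$ and $\widetilde{\Psi}_i:=\Psi_i\oplus\Phi_2$, one checks that each $\widetilde{A}_i$ is invertible, $\widetilde{A}_1^*\widetilde{A}_1+\widetilde{A}_2^*\widetilde{A}_2=I_{\mcl{H}_1\oplus\mcl{H}_2}$, and $\mathrm{Ad}_{\widetilde{A}_1}\circ\widetilde{\Psi}_1+\mathrm{Ad}_{\widetilde{A}_2}\circ\widetilde{\Psi}_2=\Phi_1\oplus\Phi_2$. Since $\Phi_1\oplus\Phi_2\in\mathrm{UCP}_{C^*-ext}(\mcl{A},\B{\mcl{H}_1\oplus\mcl{H}_2})$, there exist unitaries $W_i$ on $\mcl{H}_1\oplus\mcl{H}_2$ with $\Psi_i\oplus\Phi_2=\mathrm{Ad}_{W_i}\circ(\Phi_1\oplus\Phi_2)$. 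Everything then reduces to the cancellation implication $\Psi_i\oplus\Phi_2\sim\Phi_1\oplus\Phi_2\Rightarrow\Psi_i\sim\Phi_1$, and the same argument with the roles of $1,2$ interchanged handles $\Phi_2$.

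To attack the cancellation, I would pass to minimal Stinespring representations. Let $(\mcl{K}_1,\pi_1,V_1)$, $(\mcl{K}_2,\pi_2,V_2)$, $(\mcl{L}_i,\rho_i,R_i)$ be minimal for $\Phi_1,\Phi_2,\Psi_i$. A routine computation (using that the generating span of a direct sum splits) shows that $(\mcl{K}_1\oplus\mcl{K}_2,\pi_1\oplus\pi_2,V_1\oplus V_2)$ and $(\mcl{L}_i\oplus\mcl{K}_2,\rho_i\oplus\pi_2,R_i\oplus V_2)$ are minimal Stinespring representations of $\Phi_1\oplus\Phi_2$ and $\Psi_i\oplus\Phi_2$, respectively. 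Because $W_i$ is unitary, $(\mcl{K}_1\oplus\mcl{K}_2,\pi_1\oplus\pi_2,(V_1\oplus V_2)W_i)$ is \emph{also} minimal for $\Psi_i\oplus\Phi_2$, so uniqueness of the minimal dilation yields a unitary $\Theta_i$ intertwining $\rho_i\oplus\pi_2$ and $\pi_1\oplus\pi_2$ and carrying the two generating isometries to one another. Here finite-dimensionality enters: $\mcl{K}_1,\mcl{K}_2,\mcl{L}_i$ are finite-dimensional, representations decompose uniquely into irreducibles with finite multiplicities, and cancelling the common summand $\pi_2$ gives $\rho_i\cong\pi_1$.

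The main obstacle is to upgrade the representation equivalence $\rho_i\cong\pi_1$ to the UCP-equivalence $\Psi_i\sim\Phi_1$, since one cannot simply compress $W_i$ or $\Theta_i$ to the first block: the commutant $(\pi_1\oplus\pi_2)(\mcl{A})'$ carries off-diagonal intertwiners precisely when $\pi_1$ and $\pi_2$ have common irreducible subrepresentations, and these do not vanish. The plan is to exploit that $V_2$ occurs \emph{identically} in both minimal dilations of $\Psi_i\oplus\Phi_2$: after identifying $\mcl{L}_i$ with $\mcl{K}_1$ via $\rho_i\cong\pi_1$, both $R_i\oplus V_2$ and $(V_1\oplus V_2)W_i$ are generating isometries for the same representation $\pi_1\oplus\pi_2$, hence differ by a unitary $\Xi_i\in(\pi_1\oplus\pi_2)(\mcl{A})'$; forcing the second components to agree pins down the intertwiner data on the first block and, via the finite-dimensional structure of the commutant, produces a unitary in $\pi_1(\mcl{A})'$ together with a unitary $U_i$ on $\mcl{H}_1$ for which $R_i$ is the image of $V_1U_i$, so that $\Psi_i=\mathrm{Ad}_{U_i}\circ\Phi_1$. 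I expect this disentangling of the shared $\pi_2$-part from the $\pi_1$-part at the level of the generating isometries, rather than merely at the level of representations, to be the crux, and the only place where the finite-dimensionality hypothesis is genuinely used.
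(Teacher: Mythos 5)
Your first step --- padding the decomposition of $\Phi_1$ with $\Phi_2$ via $\widetilde{A}_i=A_i\oplus\tfrac{1}{\sqrt2}I_{\mcl{H}_2}$, $\widetilde{\Psi}_i=\Psi_i\oplus\Phi_2$ and invoking the hypothesis --- is exactly what the paper does, and it is correct. The entire content of the proposition is then the cancellation
$\Psi_i\oplus\Phi_2=\mathrm{Ad}_{W_i}\circ(\Phi_1\oplus\Phi_2)\Rightarrow\Psi_i\sim\Phi_1$, and here your proposal has a genuine gap. You correctly observe that cancelling the representation $\pi_2$ gives $\rho_i\cong\pi_1$, but the mechanism you propose for upgrading this to an equivalence of the \emph{pointed} data $(\pi_1,V_1)\sim(\rho_i,R_i)$ --- ``forcing the second components to agree pins down the intertwiner data on the first block'' --- does not work. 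After identifying $\mcl{L}_i$ with $\mcl{K}_1$, uniqueness of the minimal dilation gives a unitary $\Xi_i\in(\pi_1\oplus\pi_2)(\mcl{A})'$ with $\Xi_i(\widetilde{R}_i\oplus V_2)=(V_1\oplus V_2)W_i$; writing everything in $2\times2$ block form yields $\Xi_{11}\widetilde{R}_i=V_1W_{11}$, $\Xi_{22}V_2=V_2W_{22}$, etc., but none of these blocks need be unitary. For instance, if $\Phi_1=\Phi_2$ then $W_i$ may be the flip, in which case $W_{11}=0$ and $\Xi_{11}\widetilde{R}_i=0$: all the information relating $\widetilde{R}_i$ to $V_1$ sits in the off-diagonal blocks, and there is nothing to ``pin down'' on the first block. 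Precisely in the situation you flag as dangerous --- $\pi_1$ and $\pi_2$ sharing irreducible subrepresentations --- your argument collapses, and no workaround is indicated.

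The paper resolves the cancellation by a different tool: choose $a_1,\dots,a_n$ with $\{\Phi(a_i)\}$ a basis of $\Phi(\mcl{A})$, regard the matrices as complex, and apply Specht's theorem. Since the trace of any word in the blocks of a direct sum is the sum of the traces of the corresponding words in each summand, the common $\Phi_2$-contribution cancels, giving simultaneous \emph{complex} unitary equivalence of $\{\Phi_1(a_i)\}$ and $\{\Psi_j(a_i)\}$; a rank comparison of $\Phi(b)$ and $\widetilde{\Psi}_j(b)$ extends this from the basis to all of $\mcl{A}$, and Lemma \ref{lem-unitary-orthogonal} converts the complex unitary to a real orthogonal one. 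If you want to salvage your Stinespring route, you would need to prove the cancellation law for pointed representations $(\pi,V)$ directly, which amounts to the same trace/multiplicity bookkeeping that Specht's theorem packages; as written, the crux of your argument is asserted rather than proved.
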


The above proposition can be proved along similar lines to the proof of \cite[Lemma 4.1]{FaZh98}, together with Lemma \ref{lem-unitary-orthogonal}. For completeness, we include the proof below.

\begin{proof}
    Let $\mcl{H}:=\mcl{H}_1\oplus\mcl{H}_2$ and $\Phi:=\Phi_1\oplus\Phi_2$. Suppose $\Phi_1 = \sum_{j=1}^k\mathrm{Ad}_{T_j}\circ\Psi_j$  is a proper $C^*$-convex decomposition, where $T_j\in\B{\mcl{H}_1}$ invertible and $\Psi_j\in\mathrm{UCP}(\mcl{A},\B{\mcl{H}_1})$. Let $\wtilde{T}_j:=T_j\oplus\frac{1}{\sqrt{k}}I_{\mcl{H}_2}\in\B{\mcl{H}}$ and $\wtilde{\Psi}_j:=\Psi_j\oplus\Phi_2\in\mathrm{UCP}(\mcl{A},\B{\mcl{H}})$ for all $1\leq j\leq k$. Then  
        $\Phi=\sum_{j=1}^k\mathrm{Ad}_{\wtilde{T}_j}\circ\wtilde{\Psi}_j$
    is a proper $C^*$-convex decomposition of $\Phi$. Since $\Phi\in\mathrm{UCP}_{C^*-ext}(\mcl{A},\B{\mcl{H}})$, there exist unitaries $W_j\in\B{\mcl{H}}$ such that $\Phi=\mathrm{Ad}_{W_j}\circ\wtilde{\Psi}_j$, i.e., $\Phi_1\oplus\Phi_2 =\mathrm{Ad}_{W_j}\circ(\Psi_j\oplus\Phi_2)$ for all $1\leq j\leq k$. Now, let $a_1,\cdots,a_n\in\mcl{A}$ be such that $\{\Phi(a_1),\cdots,\Phi(a_n) \}$ is a basis of the finite-dimensional space $\Phi(\mcl{A})$. Fix $1\leq j\leq k$. We identify $\B{\mcl{H}_1} \cong M_{m}(\mbb{R})$, where $m=\dim(\mcl{H}_1)$. Now, we view the matrices in $M_m(\mbb{R})$ as complex matrices. Then, as in \cite[Lemma 4.1]{FaZh98}, using Specht's theorem \cite{Wie61} (for finite sets of matrices), we get a unitary operator $U_j \in M_m(\mbb{C})$ such that $\Phi_1(a_i)=U_j^*\Psi_j(a_i)U_j$ for each $1\leq i\leq  n$. Moreover, for each $a\in\mcl{A}$, there exist uniquely determined scalars $\alpha_i(a)\in\mbb{R}$ such that $\Phi(a)=\sum_{i=1}^n\alpha_i(a) \Phi(a_i)$. Then
    \begin{align*}
        \Phi_1(a)=\sum_{i=1}^n\alpha_i(a) \Phi_1(a_i)= U_j^*\Psi_j(\sum_{i=1}^n\alpha_i(a)a_i)U_j,\qquad\forall~a\in\mcl{A}.
    \end{align*}
    Now, given $a\in\mcl{A}$, let $b=a-\sum_{i=1}^n\alpha_i(a)a_i$. Note that $\Phi_1(b)=0$. Since $\Phi(b)$ and $\wtilde{\Psi}_j(b)$ are unitarily equivalent, they must have the same rank. Hence, we conclude that $\Psi_j(b)=0$,  i.e., $\Psi_j(\sum_{i=1}^n \alpha_i(a) a_i) = \Psi_j(a)$. Therefore, from the above equation, we obtain $\Phi_1(a) = U_j^*\Psi_j(a)U_j$ for all $a\in\mcl{A}$. Consequently, by Lemma \ref{lem-unitary-orthogonal}, there exist unitary matrices $V_j\in M_m(\mbb{R})$ such that $\Phi_1 = \mathrm{Ad}_{V_j}\circ\Psi_j$ for each $1\leq j\leq k$. Thus, $\Phi_1$ is $C^*$-extreme. Similarly, we can prove that $\Phi_2$ is also a $C^*$-extreme point.
\end{proof}

In the remainder of this section, we analyze the structure of $C^*$-extreme points. As a first step, we present the following technical proposition, which serves as a key ingredient in the proof of one of the main theorems that follows. To that end, we begin by recalling some relevant terminology. Let $\mcl{S} \subseteq\B{\mcl{H}}$ be a self-adjoint family of operators (i.e., $\mcl{S} = \mcl{S}^*$). Then the commutant $\mcl{S}':=\{T\in\B{\mcl{H}}: TZ=ZT\mbox{ for all }Z\in \mcl{S}\}$ of $\mcl{S}$ is a real von Neumann algebra \cite[Proposition 4.3.2 (1)]{Li03}. We say $\mcl{S}$ is \it{irreducible} if the only closed subspaces of $\mcl{H}$ which are invariant under $\mcl{S}$ are $\{0\}$ or $\mcl{H}$. Equivalently, the only projections in $\mcl{S}'$ are $0$ or $I_{\mcl{H}}$. In view of \cite[Proposition 4.3.4 (3)]{Li03}, $\mcl{S}$ is irreducible if and only if $\mcl{S}' \cap \B{\mcl{H}}_{sa} = \mbb{R} I_{\mcl{H}}$. We say a self-adjoint real linear map $\Phi:\mcl{A} \to \B{\mcl{H}}$ is \it{irreducible} if $\Phi(\mcl{A})$ is irreducible. 

\begin{proposition}\label{prop-irred-OS}
    Let $\mcl{H}$ be a finite-dimensional real Hilbert space, $\mcl{S}\subseteq\B{\mcl{H}}$ be an irreducible self-adjoint family of operators and $W_j\in\B{\mcl{H}},j=1,2$ be such that $\sum_{j=1}^2W_j^*ZW_j=Z$ for all $Z\in\mcl{S}$ and $\sum_{j=1}^2W_j^*W_j=I$. Then $W_jZ=ZW_j$ for all $j=1,2$ and $Z\in C^*(\mcl{S})$, the $C^*$-algebra generated by $\mcl{S}$.
\end{proposition}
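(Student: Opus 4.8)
The plan is to reformulate the hypothesis in terms of the unital completely positive map $\Psi:\B{\mcl{H}}\to\B{\mcl{H}}$ defined by $\Psi(X):=W_1^*XW_1+W_2^*XW_2$. The relation $\sum_j W_j^*W_j=I$ says $\Psi$ is unital, and $\sum_j W_j^*ZW_j=Z$ says $\Psi$ fixes $\mcl{S}$ pointwise; by linearity and unitality $\Psi$ then fixes the operator system $\mcl{T}:=\lspan_{\mbb{R}}(\mcl{S}\cup\{I\})$ pointwise. Since $\mcl{T}$ is self-adjoint we have $\mcl{T}'=\mcl{S}'$ and $C^*(\mcl{T})=C^*(\mcl{S})$, so I may work with $\mcl{T}$ throughout; note also that once $W_j$ is shown to commute with every element of $\mcl{S}$ it automatically commutes with the generated $\ast$-algebra $C^*(\mcl{S})$, so it suffices to prove $W_jZ=ZW_j$ for $Z\in\mcl{S}$.

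Assemble $W:=\SMatrix{W_1\\W_2}:\mcl{H}\to\mcl{H}\oplus\mcl{H}$, which is an isometry, and let $\rho(X):=X\oplus X$, a $\ast$-homomorphism, so that $\Psi=\mathrm{Ad}_W\circ\rho$ is a Stinespring-type dilation with $P:=WW^*$ the projection onto $\ran{W}$. For $Z\in\mcl{T}$ the Kadison--Schwarz defect computes as
\begin{align*}
 \Psi(Z^*Z)-\Psi(Z)^*\Psi(Z)=W^*\rho(Z)^*(I-P)\rho(Z)W=\big((I-P)\rho(Z)W\big)^*\big((I-P)\rho(Z)W\big)\geq 0.
\end{align*}
The endgame is the observation that if this defect vanishes for $Z$ and for $Z^*$ (both lie in $\mcl{T}$), then $\ran{W}$ is invariant under $\rho(Z)$ and under $\rho(Z)^*=\rho(Z^*)$, hence reduces $\rho(Z)$; consequently $\rho(Z)W=WW^*\rho(Z)W=W\Psi(Z)=WZ$, which upon reading the two coordinates gives exactly $ZW_j=W_jZ$ for $j=1,2$. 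Thus everything reduces to the equality case of the Schwarz inequality.

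The heart of the matter is therefore to prove $\Psi(Z^*Z)=Z^*Z$ for all $Z\in\mcl{T}$; equivalently, using $\Psi(Z)=Z$, that the positive operator $\Psi(Z^*Z)-Z^*Z$ vanishes. Here I would use that $\mcl{H}$ is finite-dimensional: the predual map $\Psi^\dagger(X):=W_1XW_1^*+W_2XW_2^*$ is trace-preserving and completely positive, so it fixes some state $\sigma\geq 0$, $\tr(\sigma)=1$. Pairing the defect against $\sigma$ and using $\tr(\sigma\Psi(X))=\tr(\Psi^\dagger(\sigma)X)=\tr(\sigma X)$ gives $\tr\big(\sigma(\Psi(Z^*Z)-Z^*Z)\big)=0$, so the defect vanishes on $\mcl{H}_0:=\ran{\sigma}$. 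To upgrade this to all of $\mcl{H}$ I would exploit irreducibility: vanishing of the defect on $\mcl{H}_0$ forces, by the reducing computation above applied on $\mcl{H}_0$, both $W_j\mcl{H}_0\subseteq\mcl{H}_0$ and $ZW_jh=W_jZh$ for $h\in\mcl{H}_0$, from which one checks that $Z\sigma$ and $\sigma Z$ are again fixed by $\Psi^\dagger$; the goal is to deduce that $\mcl{H}_0$ is invariant under $\mcl{S}$, whence $\mcl{H}_0=\mcl{H}$ by irreducibility (as $\mcl{H}_0\neq 0$), so $\sigma$ is faithful and the argument closes.

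I expect this last step---promoting the support-localized equality to a global one, i.e. deriving faithfulness of the invariant state from irreducibility of $\mcl{S}$---to be the main obstacle, precisely because $\mcl{S}$ enters the fixed-point relation \emph{between} the operators $W_j$ rather than through a common invariant subspace. A cleaner alternative worth pursuing is to bypass the invariant state: for self-adjoint $T\in\mcl{T}$ the defect already vanishes on the extreme (top and bottom) eigenspaces of $T$, since at a unit eigenvector $h$ realizing $\|T\|$ one has $\langle T^2h,h\rangle=\|T\|^2\geq\|\Psi(T^2)\|\geq\langle\Psi(T^2)h,h\rangle\geq\langle T^2h,h\rangle$, forcing equality, and these eigenspaces are then $W_j$-invariant. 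Irreducibility of $\mcl{S}$, together with the splitting of each $Z\in\mcl{T}$ into its self-adjoint and anti-symmetric parts (which is exactly the dichotomy handled in Corollary \ref{cor-compact-selfadj-commut}), should then let one propagate invariance across all spectral subspaces. Either route culminates in the reducing property of $\ran{W}$, and hence in $W_jZ=ZW_j$ for all $Z\in C^*(\mcl{S})$.
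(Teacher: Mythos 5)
Your opening moves are correct: packaging the hypothesis as the statement that the unital CP map $\Psi=\mathrm{Ad}_W\circ\rho$ fixes the operator system $\mcl{T}=\lspan_{\mbb{R}}(\mcl{S}\cup\{I\})$, the Kadison--Schwarz defect identity $\Psi(Z^*Z)-\Psi(Z)^*\Psi(Z)=\big((I-WW^*)\rho(Z)W\big)^*\big((I-WW^*)\rho(Z)W\big)$, and the observation that \emph{global} vanishing of this defect for all $Z\in\mcl{T}$ makes $\ran{W}$ reduce every $\rho(Z)$ and hence gives $W_jZ=ZW_j$ on all of $C^*(\mcl{S})$. The genuine gap is exactly the step you flag yourself and never close: you do not prove that the defect vanishes on all of $\mcl{H}$ rather than only on the support $\mcl{H}_0=\ran{\sigma}$ of an invariant state. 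The trace pairing only yields $\tr\big(\sigma(\Psi(Z^*Z)-Z^*Z)\big)=0$, hence $W_jZh=ZW_jh$ for $h\in\mcl{H}_0$ and for $Z\in\mcl{T}$ only --- this does not extend to products $Z_1Z_2$ on $\mcl{H}_0$, since $Z_2h$ need not lie in $\mcl{H}_0$. Your proposed repair, that $Z\sigma$ and $\sigma Z$ are again fixed by $\Psi^\dagger$, is true but does not by itself give $Z\mcl{H}_0\subseteq\mcl{H}_0$: a fixed point of $\Psi^\dagger$ is supported in $\ran{\sigma}$ only if (a) $\sigma$ is chosen of maximal support among invariant states and (b) one proves that a trace-preserving positive map preserves the Jordan decomposition of its self-adjoint fixed points (equality in the triangle inequality for the trace norm), so that every fixed point lives under the maximal invariant support projection. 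Neither ingredient appears in your write-up, and the alternative route via extreme eigenspaces of $|T|$ ends with ``should then let one propagate,'' which is not an argument. As it stands the proof is incomplete at its crux.

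For comparison, the paper faces the same globalization problem and resolves it by a different mechanism: it localizes to a \emph{minimal} subspace $\mcl{H}_0$ invariant under $W_1,W_2$ (not the support of an invariant state), shows by a norm-attainment argument that the set $\Gamma$ of operators commuting with the $W_j$ on $\mcl{H}_0$ is closed under left multiplication by $\mcl{S}$ (so that $C^*(\mcl{S})\subseteq\Gamma$, i.e.\ the commutation holds on $\mcl{H}_0$ for the whole algebra, not just its linear span), and then globalizes using the transitivity theorem for the irreducible algebra $C^*(\mcl{S})$ together with the identity $[W_j,Z]Yh_0=[W_j,ZY]h_0-Z[W_j,Y]h_0$. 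Some device of this kind --- either transitivity plus this commutator identity, or the Jordan-decomposition lemma that makes your invariant state faithful --- is indispensable here, and supplying one of them is what your proposal still needs.
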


\begin{proof}
    Consider the UCP map $\Psi:\B{\mcl{H}}\to\B{\mcl{H}}$ defined by 
    \begin{align*}
        \Psi(Z)=W_1^*ZW_1+W_2^*ZW_2,\qquad\forall~Z\in\B{\mcl{H}}.
    \end{align*}
    From assumption, $\Psi(Z)=Z$ for all $Z\in\mcl{S}$. Note that $(\mcl{H}\oplus\mcl{H},\pi,V)$ is a Stinespring representation of $\Psi$, where 
    \begin{align*}
        V=W_1\oplus W_2,
        \quad\mbox{and}\quad
        \pi=\id_{\B{\mcl{H}}} \oplus\id_{\B{\mcl{H}}}.
    \end{align*}
    Let $\mcl{H}_0$ be a minimal non-zero subspace of $\mcl{H}$ satisfying $W_j(\mcl{H}_0) \subseteq \mcl{H}_0$, $j=1,2$. Note that $\mcl{H}_0$ can be $\mcl{H}$ itself. Consider 
    \begin{align*}
        \Gamma:= \{Z \in \B{\mcl{H}}: W_jZh = ZW_jh , \forall h\in\mcl{H}_0, j=1,2\}
    \end{align*}  
    Clearly, $\Gamma$ is a subspace of $\B{\mcl{H}}$ which contains $I_{\mcl{H}}$. Now, fix $Z \in \Gamma$ and $S\in\mcl{S}$. Define 
    \begin{align*}
        \mcl{H}_0':=\{h\in\mcl{H}_0 : \norm{SZh} = \norm{SZ|_{\mcl{H}_0}} \norm{h}\}
    \end{align*}
    Since operators on finite-dimensional space attains norm, $\mcl{H}_0'$ contains non-zero vectors. Now, if $h\in\mcl{H}_0'$, then
    \begin{align*}
        \norm{SZh}^2 
        &= \norm{\Psi(S)Zh}^2 = \norm{W_1^*SW_1Zh + W_2^*SW_2Zh}^2 \\
        &= \norm{W_1^*SZW_1h + W_2^*SZW_2h}^2 = \norm{\Psi(SZ)h}^2 \\
        &= \norm{V^*\pi(SZ)Vh}^2 = \norm{VV^*\pi(SZ)Vh}^2 \\ 
        &\leq \norm{\pi(SZ)Vh}^2 = \norm{SZW_1h}^2 + \norm{SZW_2h}^2 \\ 
        &\leq \norm{SZ|_{\mcl{H}_0}}^2 \left( \norm{W_1h}^2  + \norm{W_2h}^2 \right) \\ 
        &= \norm{SZ|_{\mcl{H}_0}}^2 \norm{h}^2 = \norm{SZh}^2.
    \end{align*}
    Since both sides of the above inequality are the same, the inequalities becomes equalities and in particular we have  
    \begin{align}
        &\norm{SZW_jh}^2 = \norm{SZ|_{\mcl{H}_0}}^2 \norm{W_jh}^2,\qquad\forall~h\in\mcl{H}_0'\quad\mbox{ and}, \label{eq.1}\\
        &\norm{VV^*\pi(SZ)Vh}^2 = \norm{\pi(SZ)Vh}^2,\qquad\forall~h\in\mcl{H}_0'.\label{eq.2}
    \end{align}
    From \eqref{eq.1} it follows that $W_j(\mcl{H}_0') \subseteq \mcl{H}_0'$, $j=1,2$. By the minimality of $\mcl{H}_0$, we must have $\mcl{H}_0 = \lspan{\mcl{H}_0'}$. Now, since $VV^*$ is an orthogonal projection, from \eqref{eq.2}, we conclude that $VV^*\pi(SZ)Vh = \pi(SZ)Vh$ for all $h\in \mcl{H}_0'$; expanding the left- and right-hand sides we get
    \begin{align*}
        SZW_1h &= W_1W_1^*SZW_1h+W_1W_2^*SZW_2h\\
        SZW_2h &= W_2W_1^*SZW_1h+W_2W_2^*SZW_2h
    \end{align*}
    for all $h\in\mcl{H}_0'$. Then, since $Z\in\Gamma$, we get 
    \begin{align*}
        SZW_1h 
                = W_1W_1^*SW_1Zh + W_1W_2^*SW_2Zh 
                = W_1\Psi(S)Zh
                = W_1SZh
    \end{align*}
 for all $h\in\mcl{H}_0'$. Similarly, we get $SZW_2h = W_2SZh$ for all $h\in\mcl{H}_0'$. Now, since $\mcl{H}_0 = \lspan{\mcl{H}_0'}$, we conclude that $SZW_jh = W_jSZh$ for all $h\in \mcl{H}_0$, $j=1,2$.  Therefore, $SZ \in \Gamma$ for any $Z\in\Gamma$ and $S\in\mcl{S}$. Putting $Z=I_\mcl{H}$, we get $\mcl{S}\subseteq\Gamma$. Thus, if $S,Z\in\mcl{S}$, then $SZ\in\Gamma$. This implies $C^*(\mcl{S})\subseteq \Gamma$.  That is, we have $W_jZh = ZW_jh$ for all $Z\in C^*(\mcl{S})$ and $h\in\mcl{H}_0$. Now, fix $0\neq h_0 \in \mcl{H}_0$. Let $Z\in C^*(\mcl{S})$ and $h\in\mcl{H}$ be arbitrary. Note that since $\mcl{S}$ is irreducible, $C^*(\mcl{S})$ is an irreducible $C^*$-subalgebra of $\B{\mcl{H}}$. Then, by the transitivity property of $C^*(\mcl{S})$ \cite[Theorem 5.3.10]{Li03}, there exists $Y\in C^*(\mcl{S})$ such that $Yh_0 = h$. Then we use the commutator identity \[ [W_j,Z]Yh_0 = [W_j,ZY]h_0 - Z[W_j,Y]h_0\] to conclude that $[W_j,Z]h = 0$. That is, $W_jZ=ZW_j$ for all $Z\in C^*(\mcl{S})$.
\end{proof}

    The complex $C^*$-algebra version of the following theorem has been proved in several works in the literature (\cite{Far00,FaMo97,Zho98}), with each proof involving non-trivial algebraic computations. 
    In this paper, we present a proof of the real version of the theorem, drawing on ideas from \cite{Far00,Zho98}, with the above proposition serving as the main technical ingredient.



\begin{theorem}[{\cite[Theorem 2.1]{FaMo97}}] \label{thm-directsum-pure} 
 Let $\mcl{A}$ be a unital real $C^*$-algebra, $\mcl{H}$ be a finite-dimensional real Hilbert space and $\Phi\in\mathrm{UCP}_{C^*-ext}(\mcl{A},\B{\mcl{H}})$. Then there exist finitely many closed subspaces $\mcl{H}_i \subseteq \mcl{H}$ and maps $\Phi_i \in \mathrm{UCP}(\mcl{A},\B{\mcl{H}_i})$ such that 
 \begin{enumerate}[label=(\roman*)]
     \item $\Phi_i$ is pure for each $1\leq i\leq k$, and
     \item $\mcl{H} = \oplus_i \mcl{H}_i$ and $\Phi \sim \oplus_i \Phi_i$.
 \end{enumerate}
 That is, $\Phi$ is unitarily equivalent to a direct sum of pure UCP maps.
\end{theorem}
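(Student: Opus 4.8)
The plan is to first split $\Phi$ into pieces whose images act irreducibly, and then to show that a $C^*$-extreme point with irreducible image must be pure; assembling the pieces then yields the theorem. To carry out the splitting, set $\mcl{N}:=C^*(\Phi(\mcl{A}))\subseteq\B{\mcl{H}}$, the finite-dimensional real $C^*$-algebra generated by the self-adjoint operator system $\Phi(\mcl{A})$. Since $\mcl{N}$ is $\ast$-closed, every $\mcl{N}$-invariant subspace is reducing, so $\mcl{H}$ decomposes orthogonally as $\mcl{H}=\bigoplus_{i=1}^k\mcl{H}_i$ with each $\mcl{H}_i$ an $\mcl{N}$-irreducible reducing subspace. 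Each $\Phi(a)\in\mcl{N}$ is then block diagonal, giving $\Phi=\bigoplus_{i=1}^k\Phi_i$ with $\Phi_i\in\mathrm{UCP}(\mcl{A},\B{\mcl{H}_i})$ the restriction of $\Phi$ and $\Phi_i(\mcl{A})$ irreducible on $\mcl{H}_i$ (it generates $\mcl{N}|_{\mcl{H}_i}$, which acts irreducibly). Iterating Proposition \ref{prop-CompExtremepoint} across this finite direct sum shows that each $\Phi_i$ is a $C^*$-extreme point. Thus it suffices to prove the reduced claim: a $C^*$-extreme point with irreducible image is pure.

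The heart of the proof is this reduced claim, and it is where Proposition \ref{prop-irred-OS} is used. Suppose toward a contradiction that $\Phi_i$ is $C^*$-extreme with $\Phi_i(\mcl{A})$ irreducible but $\Phi_i$ is not pure, and let $(\mcl{K},\pi,V)$ be its minimal Stinespring representation. By Theorem \ref{thm-radon-nikodym}(ii), $\pi$ is reducible, so $\pi(\mcl{A})'$ contains a nontrivial projection; rescaling it yields a positive, non-scalar $D\in\pi(\mcl{A})'$ with $\tfrac14 I\leq D\leq\tfrac34 I$, so that $V^*DV$ and $V^*(I-D)V$ are invertible. Then $\Psi:=V^*D\pi(\cdot)V$ and $\Psi':=V^*(I-D)\pi(\cdot)V$ are CP maps with $\Psi\leq_{cp}\Phi_i$, $\Psi'\leq_{cp}\Phi_i$, $\Psi+\Psi'=\Phi_i$, and $\Psi(1),\Psi'(1)$ invertible. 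By Theorem \ref{thm-Cext-abstract-char}(iv) there are invertible $Z,Z'\in\B{\mcl{H}_i}$ with $\Psi=\mathrm{Ad}_Z\circ\Phi_i$ and $\Psi'=\mathrm{Ad}_{Z'}\circ\Phi_i$. Adding these and using $\Phi_i(1)=I$ gives $\sum_{j=1}^2 W_j^*\Phi_i(a)W_j=\Phi_i(a)$ for all $a\in\mcl{A}$ and $\sum_{j=1}^2 W_j^*W_j=I$, where $W_1:=Z$ and $W_2:=Z'$. Applying Proposition \ref{prop-irred-OS} with $\mcl{S}=\Phi_i(\mcl{A})$ (irreducible), both $Z$ and $Z'$ commute with $C^*(\Phi_i(\mcl{A}))$.

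To finish, observe that $G:=Z^*Z$ is self-adjoint and, by the previous step, commutes with $\Phi_i(\mcl{A})$; since the self-adjoint part of the commutant of an irreducible family equals $\mbb{R}I$, we get $G=\mu I$ with $\mu\in(0,1)$. Hence $\Psi=\mathrm{Ad}_Z\circ\Phi_i=Z^*Z\,\Phi_i(\cdot)=\mu\Phi_i=V^*(\mu I)\pi(\cdot)V$, and the uniqueness of the Radon--Nikodym derivative in Theorem \ref{thm-radon-nikodym}(i) forces $D=\mu I$, contradicting the non-scalar choice of $D$. Therefore each $\Phi_i$ is pure and $\Phi\sim\bigoplus_i\Phi_i$ is the asserted decomposition. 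The main obstacle is exactly this irreducible-case claim: unlike the complex setting, the commutant of an irreducible family may be $\mbb{C}$ or $\mbb{H}$, so the operators $Z,Z'$ need not be scalar and only their self-adjoint product $Z^*Z$ is; the argument must therefore be routed through $Z^*Z$ and combined with Radon--Nikodym uniqueness to extract the contradiction.
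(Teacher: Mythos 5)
Your proof is correct, and its overall architecture coincides with the paper's: decompose $\mcl{H}$ into subspaces on which $\Phi(\mcl{A})$ acts irreducibly, invoke Proposition \ref{prop-CompExtremepoint} to see each summand is $C^*$-extreme, and then use Proposition \ref{prop-irred-OS} as the key engine to show that an irreducible $C^*$-extreme point is pure. The one genuine difference lies in how you finish the irreducible case. The paper shows that $V^*DV$ is a scalar for \emph{every} positive $D\in\pi(\mcl{A})'$ and then concludes that $\pi(\mcl{A})'\cap\B{\mcl{K}}_{sa}=\mbb{R}I_{\mcl{K}}$ by appealing to the injectivity of the map $\msc{C}$ from Theorem \ref{thm-extreme-point-char}, which in turn requires knowing that $C^*$-extreme points are linear extreme points (Proposition \ref{prop-Cstar-ext-Lin-extr}). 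You instead argue by contradiction with a single, explicitly non-scalar $D=\tfrac14 I+\tfrac12 P$, push the commutation of $Z$ with $C^*(\Phi_i(\mcl{A}))$ through $Z^*Z=\mu I$ to get $\Psi=\mu\Phi_i=V^*(\mu I)\pi(\cdot)V$, and then use the \emph{uniqueness} clause of the Radon--Nikodym theorem (Theorem \ref{thm-radon-nikodym}(i)) to force $D=\mu I$. This buys a shorter dependency chain -- you never need the linear-extremality machinery -- at the cost of using the $C^*$-extreme characterization via Theorem \ref{thm-Cext-abstract-char}(iv) rather than the bare definition. Both arguments are valid; yours is a clean and slightly more self-contained variant of the same proof.
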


\begin{proof}
    We divide the proof into steps.\\
    \ul{Step 1:} We show that if $\Phi$ is irreducible, then it is pure. So, let $(\mcl{K},\pi,V)$ be the minimal Stinespring representation of $\Phi.$ Since $\Phi$ is $C^*$-extreme, it is also an extreme point by Proposition \ref{prop-Cstar-ext-Lin-extr}. Thus, it suffices to show that the injective map $\msc{C}:\pi(\mcl{A})' \cap \B{\mcl{K}}_{sa}\to\B{\mcl{H}}$, from Theorem \ref{thm-extreme-point-char}, has one-dimensional range. Indeed, in that case, the domain of $\msc{C}$ must also be one-dimensional, i.e., $\pi(\mcl{A})' \cap \B{\mcl{K}}_{sa} = \mbb{R}I_{\mcl{K}}$. Then, by \cite[Proposition 5.3.7]{Li03}, it follows that $\pi$ is irreducible, and hence $\Phi$ is pure. Now, to prove that the range of $\msc{C}$ is one-dimensional, let $D \in \pi(\mcl{A})'$ be a positive operator. For $\epsilon>0$, set $D_{\epsilon} := D+\epsilon I_{\mcl{K}}$. Then $V^*D_{\epsilon}V$ is invertible in $B(\mcl{H})$. Choose and fix a scalar $t>0$ such that $\norm{tD_{\epsilon}} <1$. Then $\norm{tV^*D_{\epsilon}V} < 1$. Let $D_1:=tD_{\epsilon}$ and $D_2:=I_{\mcl{K}}-tD_{\epsilon}$.  Note that $V^*D_1V$ and $V^*D_2V$ are positive invertible operators in $\B{\mcl{H}}$. Then $T_j:=(V^*D_jV)^{1/2}, j=1,2$, are invertible operators such that $T_1^*T_1+T_2^*T_2=I_\mcl{H}$, and $\Phi=\sum_{j=1}^{2} \mathrm{Ad}_{T_j}\circ\Phi_j$ is a proper $C^*$-convex combination of the UCP maps 
    \begin{align*}
        \Phi_j(\cdot):= T_j^{-1}V^*D_j\pi(\cdot)VT_j^{-1}, \qquad~j=1,2.
    \end{align*}
    Since $\Phi$ is $C^*$-extreme, there exist unitaries $U_j \in \B{\mcl{H}}$ such that $\Phi_j=\mathrm{Ad}_{U_j}\circ\Phi$, $j=1,2$. Letting $W_j := U_jT_j\in\B{\mcl{H}}, j=1,2$, we observe that $\sum_{j=1}^2 W_j^*W_j=I_\mcl{H}$ and 
    \begin{align*}
        \Phi = \sum_{j=1}^{2} \mathrm{Ad}_{W_j}\circ\Phi.
    \end{align*}
    Now consider the UCP map $\Psi:\B{\mcl{H}} \to \B{\mcl{H}}$ defined by 
    \begin{align*}
        \Psi(Z):= W_1^*ZW_1 + W_2^*ZW_2,\qquad\forall~Z\in\B{\mcl{H}}.
    \end{align*}
    From the construction we have $\Psi(Z)=Z$ for all $Z\in\Phi(\mcl{A})$. Then, taking $\mcl{S}=\Phi(\mcl{A})$ in Proposition \ref{prop-irred-OS}, we have $W_jZ=ZW_j$ for all $Z\in C^*(\Phi(\mcl{A}))$, $j=1,2$. Hence $W_j^*W_j$ is also in the commutant of $C^*(\Phi(\mcl{A}))$ for $j=1,2$. Note that since $\Phi$ is irreducible, $C^*(\Phi(\mcl{A}))$ is an irreducible $C^*$-subalgebra of $\B{\mcl{H}}$. Hence by \cite[Proposition 5.3.7]{Li03}, $ V^*D_jV=W_j^*W_j \in \mbb{R}_+ I_{\mcl{H}}$. It follows that $V^*D_{\epsilon}V \in\mbb{R}_+ I_{\mcl{H}}$, and consequently $V^*DV\in\mbb{R}_+ I_{\mcl{H}}$. Thus, $V^*DV \in \mbb{R}_+ I_{\mcl{H}}$ for all positive operators $D\in\pi(\mcl{A})'$, and hence $V^*DV\in\mbb{R}I_{\mcl{H}}$ for all $D\in\pi(\mcl{A})' \cap \B{\mcl{K}}_{sa}$. Therefore, we conclude that the range of $\msc{C}$ is one-dimensional and hence $\Phi$ is pure.\\
    \ul{Step 2:} Suppose $\Phi$ is not irreducible. Then there exists a non-trivial projection $P\in \Phi(\mcl{A})'$. Since $\mcl{H}$ is finite-dimensional, without loss of generality, we assume that $P$ is a minimal projection in $\Phi(\mcl{A})'$. Set $\mcl{H}_1 = \ran{P}$. Then, $\Phi$ can be decomposed into a direct sum of two UCP maps, namely $\Phi = \Phi_1 \oplus \wtilde{\Psi}$, where 
    \begin{align*}
        \Phi_1(\cdot) :&= \Phi(\cdot)|_{\mcl{H}_1} \in \mathrm{UCP}(\mcl{A},\B{\mcl{H}_1}),\quad\mbox{and}\\
        \widetilde{\Psi}(\cdot) :&= \Phi(\cdot)|_{\mcl{H}_1^{\perp}} \in \mathrm{UCP}(\mcl{A},\B{\mcl{H}_1^{\perp}})
    \end{align*}
    Then, by Proposition \ref{prop-CompExtremepoint}, we have $\Phi_1 \in \mathrm{UCP}_{C^*-ext}(\mcl{A},\B{\mcl{H}_1})$ and $\widetilde{\Psi} \in \mathrm{UCP}_{C^*-ext}(\mcl{A},\B{\mcl{H}_1^{\perp}})$. Note that $\Phi_1$ is irreducible due to the minimality of $P$. Hence, by Step 1, $\Phi_1$ is pure. Now, if $\widetilde{\Psi}$ is irreducible (and hence pure) we are done. Otherwise, repeat the above argument for the $C^*$-extreme point $\widetilde{\Psi}$ of $\mathrm{UCP}(\mcl{A},\B{\mcl{H}_1^{\perp}})$, where $\mcl{H}_1^{\perp} \subset \mcl{H}$ is a proper subspace. Since $\dim{\mcl{H}} < \infty$, after finitely many (say $k$) iterations of this process, we obtain real Hilbert spaces $\{\mcl{H}_j\}_{j=1}^k$ and pure UCP maps $\{\Phi_j:\mcl{A} \to \B{\mcl{H}_j}\}_{j=1}^k$ with $\mcl{H} = \oplus_{j=1}^k \mcl{H}_j$ and $\Phi = \oplus_{j=1}^k \Phi_j$.
\end{proof}

    Given Hilbert spaces $\mcl{K}_1, \mcl{K}_2$  and two representations $\pi_j:\mcl{A}\to\B{\mcl{K}_j}, j=1,2$, we define their intertwining space as 
    \begin{align*}
        \mcl{I}(\pi_1,\pi_2):=\{T\in\B{\mcl{K}_1,\mcl{K}_2}: T \pi_1(a) = \pi_2(a)T\text{ for all }a\in\mcl{A}\}.
    \end{align*}
    If there exists a unitary operator in $\mcl{I}(\pi_1,\pi_2)$, then we say that $\pi_1$ and $\pi_2$ are \textit{unitarily equivalent}. Furthermore, we say that $\pi_1$ and $\pi_2$ are \textit{disjoint} if no non-zero sub-representation of $\pi_1$ is unitarily equivalent to any sub-representation of $\pi_2$.

\begin{proposition}[{\cite[Proposition  2.1.4]{Arv76}}]\label{prop-disjoint-iff} 
    Let $\mcl{K}_1,\mcl{K}_2$ be real Hilbert spaces and $\pi_j:\mcl{A}\to\B{\mcl{K}_j}, j=1,2$, be two representations. Then $\pi_1$ and $\pi_2$ are disjoint if and only if $\mcl{I}(\pi_1,\pi_2)=\{0\}$. 
\end{proposition}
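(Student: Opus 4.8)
The plan is to prove the two implications separately, each by contraposition, using throughout the standard fact that since $\pi_j(\mcl{A})$ is a self-adjoint family, every closed $\pi_j(\mcl{A})$-invariant subspace is automatically reducing; equivalently, its orthogonal projection lies in $\pi_j(\mcl{A})'$. The whole argument is the real-Hilbert-space transcription of Arveson's proof, so I will flag only the points where the real setting needs checking.

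For the implication ``$\mcl{I}(\pi_1,\pi_2)=\{0\}\ \Rightarrow\ \pi_1,\pi_2$ disjoint'' I would argue the contrapositive: assume $\pi_1,\pi_2$ are not disjoint, so that there are non-zero reducing subspaces $\mcl{M}_j\subseteq\mcl{K}_j$ and a unitary $U:\mcl{M}_1\to\mcl{M}_2$ with $U\,\pi_1(a)|_{\mcl{M}_1}=\pi_2(a)|_{\mcl{M}_2}\,U$ for all $a\in\mcl{A}$. Letting $P_1$ be the projection of $\mcl{K}_1$ onto $\mcl{M}_1$ and composing $U$ with the inclusion $\mcl{M}_2\hookrightarrow\mcl{K}_2$, I set $T:=U P_1\in\B{\mcl{K}_1,\mcl{K}_2}$, which is non-zero. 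Using $P_1\in\pi_1(\mcl{A})'$ (so $P_1\pi_1(a)=\pi_1(a)P_1$) together with the intertwining of $U$, a one-line computation gives $T\pi_1(a)=\pi_2(a)T$, so $0\ne T\in\mcl{I}(\pi_1,\pi_2)$.

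For the converse, ``$\pi_1,\pi_2$ disjoint $\Rightarrow\ \mcl{I}(\pi_1,\pi_2)=\{0\}$'', I would again argue the contrapositive: take $0\ne T\in\mcl{I}(\pi_1,\pi_2)$ and pass to the polar decomposition $T=W|T|$, where $|T|=(T^*T)^{1/2}$ and $W$ is the associated partial isometry with initial space $\clran{|T|}=(\ker{T})^\perp$ and final space $\clran{T}$. First, taking adjoints in $T\pi_1(a)=\pi_2(a)T$ and replacing $a$ by $a^*$ (using that representations are $\ast$-homomorphisms) yields $T^*\pi_2(a)=\pi_1(a)T^*$, whence $T^*T\in\pi_1(\mcl{A})'$ and therefore $|T|\in\pi_1(\mcl{A})'$. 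Consequently $\ker{T}=\ker{|T|}$ is reducing for $\pi_1$ and $\clran{T}$ is reducing for $\pi_2$. From $W|T|\pi_1(a)=T\pi_1(a)=\pi_2(a)T=\pi_2(a)W|T|$ I obtain that $W\pi_1(a)$ and $\pi_2(a)W$ agree on $\clran{|T|}=(\ker{T})^\perp$; on the complementary reducing subspace $\ker{T}$ both sides vanish (the right side because $W$ annihilates $\ker{T}$, the left because $\pi_1(a)$ preserves $\ker{T}$). Hence $W\in\mcl{I}(\pi_1,\pi_2)$, and $W$ restricts to a unitary from the non-zero reducing subspace $(\ker{T})^\perp$ onto $\clran{T}$ intertwining the corresponding subrepresentations of $\pi_1$ and $\pi_2$; thus $\pi_1,\pi_2$ are not disjoint.

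The only genuinely real-versus-complex point, and the step I expect to be the main obstacle, is the functional-calculus fact $|T|=(T^*T)^{1/2}\in\pi_1(\mcl{A})'$ together with the existence and intertwining behaviour of the polar decomposition in $\B{\mcl{K}_1,\mcl{K}_2}$ for real Hilbert spaces. Both go through: a positive operator on a real Hilbert space has a unique positive square root given by the continuous functional calculus applied to the real-valued function $t\mapsto\sqrt{t}$, which is a uniform limit of real polynomials on the spectrum, so $(T^*T)^{1/2}$ commutes with every operator commuting with $T^*T$; and the polar decomposition of a bounded operator between real Hilbert spaces exists with $W$ a real partial isometry having the stated initial and final spaces. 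With these facts in hand, the computations are identical to the complex case.
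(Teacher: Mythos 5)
Your proof is correct; it is the standard polar-decomposition argument of Arveson transcribed to real Hilbert spaces, and the two real-versus-complex points you flag (the real continuous functional calculus giving $|T|=(T^*T)^{1/2}\in\pi_1(\mcl{A})'$, and the existence of the polar decomposition with the stated initial and final spaces) are exactly the ones that need checking and do go through. The paper gives no proof of this proposition at all---it simply cites \cite[Proposition 2.1.4]{Arv76}---so your argument supplies precisely the verification the authors leave implicit.
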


    

    Note that if $\pi_1$ and $\pi_2$ are two disjoint representations, then from the above proposition, it follows that $\oplus_1^m\pi_1$ and $\oplus_1^n\pi_2$ are disjoint representations for all $m,n\in\mbb{N}$. Also note that the complexifications $(\pi_1)_c$ and $(\pi_2)_c$ are disjoint.

\begin{definition}
    Let  $\Phi_j:\mcl{A}\to\B{\mcl{H}_j}$ be UCP maps with minimal Stinespring representation $(\mcl{K}_j,\pi_j,V_j)$ for $j=1,2$. We say that $\Phi_1$ and $\Phi_2$ are \textit{disjoint} if $\pi_1$ and $\pi_2$ are disjoint representations. 
    A countable family $\{\Phi_j\}_j$ of UCP maps $\Phi_j:\mcl{A}\to\B{\mcl{H}_j}$, each with minimal Stinespring representation $(\mcl{K}_j,\pi_j,V_j)$, is said to be \textit{mutually disjoint} if $\Phi_i$ and $\Phi_j$ are disjoint for all $i\neq j$. 
\end{definition} 



\begin{proposition}[{\cite[Proposition 3.4]{BhKu22}}]\label{prop-disjoint-$C^*$-ext} 
    Let $\{\Phi_j\}_j$ be a countable family of mutually disjoint UCP maps $\Phi_j:\mcl{A}\to\B{\mcl{H}_j}$, where $\mcl{H}_j$'s are real Hilbert spaces. Then $\bigoplus_j\Phi_j\in \mathrm{UCP}_{C^*-ext}(\mcl{A},\B{\bigoplus_j\mcl{H}_j})$ if and only if each $\Phi_j \in \mathrm{UCP}_{C^*-ext}(\mcl{A},\B{\mcl{H}_j})$.
\end{proposition}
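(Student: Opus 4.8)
The plan is to work entirely with minimal Stinespring representations and the abstract characterizations from Theorem~\ref{thm-Cext-abstract-char}. Write $\Phi=\bigoplus_j\Phi_j$ acting on $\mcl{H}=\bigoplus_j\mcl{H}_j$, and let $(\mcl{K}_j,\pi_j,V_j)$ be the minimal Stinespring representation of $\Phi_j$. First I would record that $(\mcl{K},\pi,V):=(\bigoplus_j\mcl{K}_j,\bigoplus_j\pi_j,\bigoplus_j V_j)$ is the minimal Stinespring representation of $\Phi$, which is routine once one checks the spanning condition coordinatewise. The crucial structural input is that mutual disjointness makes $\pi(\mcl{A})'$ block diagonal: writing $T\in\pi(\mcl{A})'$ as an operator matrix $(T_{il})$ with $T_{il}\in\B{\mcl{K}_l,\mcl{K}_i}$, the intertwining relation forces $T_{il}\in\mcl{I}(\pi_l,\pi_i)$, which vanishes for $i\neq l$ by Proposition~\ref{prop-disjoint-iff}. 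Hence $\pi(\mcl{A})'$ consists exactly of the uniformly bounded block-diagonal operators $\bigoplus_j S_j$ with $S_j\in\pi_j(\mcl{A})'$, and correspondingly $VV^*=\bigoplus_j V_jV_j^*$ and $V^*SV=\bigoplus_j V_j^*S_jV_j$.

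For the implication ``each $\Phi_j$ is $C^*$-extreme $\Rightarrow\Phi$ is $C^*$-extreme'' I would verify condition $(iv)$ of Theorem~\ref{thm-Cext-abstract-char}. Given $\Psi\leq_{cp}\Phi$ with $\Psi(1)$ invertible, Theorem~\ref{thm-radon-nikodym} produces a positive contraction $D\in\pi(\mcl{A})'$ with $\Psi=V^*D\pi(\cdot)V$; by the block-diagonal structure $D=\bigoplus_j D_j$, so $\Psi=\bigoplus_j\Psi_j$ with $\Psi_j(\cdot)=V_j^*D_j\pi_j(\cdot)V_j\leq_{cp}\Phi_j$. Since $\Psi(1)$ is invertible there is $c>0$ with $\Psi_j(1)\geq c\,I_{\mcl{H}_j}$ for every $j$, so each $\Psi_j(1)$ is invertible; applying $(iv)$ to $\Phi_j$ yields an invertible $Z_j$ with $\Psi_j=\mathrm{Ad}_{Z_j}\circ\Phi_j$. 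Here I would exploit the automatic identity $Z_j^*Z_j=\Psi_j(1)$ together with $\Psi_j(1)\leq\Phi_j(1)=I$, which gives the uniform bounds $\norm{Z_j}\leq 1$ and $\norm{Z_j^{-1}}\leq c^{-1/2}$; this is precisely what is needed so that $Z:=\bigoplus_j Z_j$ is a well-defined bounded invertible operator on $\mcl{H}$ with $\Psi=\mathrm{Ad}_Z\circ\Phi$, establishing $(iv)$ for $\Phi$.

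For the converse I would instead use condition $(iii)$, because it is phrased entirely inside the commutant $\pi(\mcl{A})'$, which we have already shown is block diagonal. Fixing an index (say $j=1$) and an arbitrary positive $D_1\in\pi_1(\mcl{A})'$ with $V_1^*D_1V_1$ invertible, set $D:=D_1\oplus\bigoplus_{j\geq 2}I_{\mcl{K}_j}\in\pi(\mcl{A})'$, which is positive with $V^*DV$ invertible. Since $\Phi$ is $C^*$-extreme, $(iii)$ provides $S\in\pi(\mcl{A})'$ with $V^*SV$ invertible, $SVV^*=VV^*SVV^*$ and $D=S^*S$; because $S=\bigoplus_j S_j$ is automatically block diagonal, its first block $S_1\in\pi_1(\mcl{A})'$ inherits all of these properties relative to $V_1$ and satisfies $D_1=S_1^*S_1$. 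Thus $(iii)$ holds for $\Phi_1$, so $\Phi_1$ is $C^*$-extreme, and the same argument applies to every index.

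The step I expect to be the main obstacle is the uniform boundedness in the forward direction: condition $(iv)$ only returns an invertible $Z_j$ for each summand, and assembling these into a single bounded operator with bounded inverse on an infinite direct sum is not automatic. The resolution is the observation that any such $Z_j$ satisfies $Z_j^*Z_j=\Psi_j(1)$, so the invertibility of $\Psi(1)$ (which forces a single lower bound $c$ for all $j$) together with the domination $\Psi_j\leq_{cp}\Phi_j$ pins the norms of $Z_j$ and $Z_j^{-1}$ uniformly. The block-diagonalization of $\pi(\mcl{A})'$ is the other place requiring care, but it follows directly from Proposition~\ref{prop-disjoint-iff}, and it is exactly this structure that lets the reverse direction go through cleanly via $(iii)$.
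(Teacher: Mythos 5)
Your proof is correct. The paper gives no proof of this proposition, deferring entirely to \cite[Proposition 3.4]{BhKu22}, and your argument is essentially the standard one used there: disjointness plus Proposition \ref{prop-disjoint-iff} forces the commutant of $\bigoplus_j\pi_j$ to be block diagonal, after which conditions (iii) and (iv) of Theorem \ref{thm-Cext-abstract-char} pass between $\Phi$ and its summands, and your uniform bounds $\norm{Z_j}\le 1$, $\norm{Z_j^{-1}}\le c^{-1/2}$ (from $Z_j^*Z_j=\Psi_j(1)$) correctly dispose of the only delicate point in the countably infinite case.
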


    In \cite[Theorem 2.1]{FaZh98}, the necessary and sufficient conditions are provided for a UCP map on a complex $C^*$-algebra $\mcl{A}$ to be a $C^*$-extreme point of $\mathrm{UCP}(\mcl{A}, \B{\mcl{H}})$, where $\mcl{H}$ is a complex Hilbert space. In the remainder of this section, we examine whether these conditions are also necessary and sufficient in the context of real $C^*$-algebras. We observe that subtle differences arise between the real and complex $C^*$-algebra cases. To this end, we recall the following terminology and some observations from the literature.

\begin{definition}
    Let $\Phi\in\mathrm{UCP}(\mcl{A},\B{\mcl{H}})$. A unital CP map $\Psi:\mcl{A}\to\B{\mcl{G}}$, where $\mcl{G}$ is a real Hilbert space, is said to be a \it{compression} of $\Phi$ if there exists an isometry $W: \mcl{G} \to \mcl{H}$ such that $\Psi= \mathrm{Ad}_W\circ\Phi$. A \it{nested sequence of compressions} of a representation $\pi:\mcl{A}\to\B{\mcl{H}}$  is a (finite) sequence $\{\Phi_j\}_j$ of UCP maps $\Phi_j:\mcl{A}\to\B{\mcl{H}_j}$ such that $\Phi_{j+1}$ is a compression of $\Phi_j$, for each $j\geq 1$, and $\Phi_1$ is a compression of $\pi$.
\end{definition}

    The following observations follow from the definition:  
    \begin{itemize}
     \item Let $\Phi\in\mathrm{UCP}(\mcl{A},\B{\mcl{H}})$ be a pure UCP map with the minimal Stinespring representation $(\mcl{K},\pi,V)$, and let $\Psi=\mathrm{Ad}_{W}\circ\Phi$ be a compression of $\Phi$ for some isometry $W\in\B{\mcl{G,H}}$. Then $(\mcl{K},\pi,VW)$ is the minimal Stinespring representation for $\Psi$, and consequently, $\Psi$ is pure. 
     \item Let $\Phi_j\in\mathrm{UCP}(\mcl{A},\B{\mcl{H}_j}), j=1,2$, and let $(\mcl{K},\pi,V_j)$ be their minimal Stinespring representation (i.e., both $\Phi_1$ and $\Phi_2$ are compression of $\pi$). Then $\Phi_2$ is a compression of $\Phi_1$ if and only if $V_2V_2^* \leq V_1V_1^*$, i.e., $\ran{V_2}\subseteq\ran{V_1}$.
    \end{itemize}


\begin{lemma}
    Let $\Phi_j \in \mathrm{UCP}(\mcl{A},\B{\mcl{H}_j})$, $j=1,2$, be compressions of the same irreducible representation $\pi:\mcl{A}\to\B{\mcl{K}}$ of real type, where $\dim(\mcl{H}_2)\leq\dim(\mcl{H}_1)<\infty$. If both $\Phi_1$ and $\Phi_2$ are pure UCP maps and $\Phi_1 \oplus \Phi_2 \in \mathrm{UCP}_{C^*-ext}(\mcl{A},\B{\mcl{H}_1\oplus\mcl{H}_2})$, then $\Phi_2$ is a compression of $\Phi_1$.
\end{lemma}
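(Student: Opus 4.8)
The plan is to reformulate the statement in terms of the ranges of the Stinespring isometries and then contradict the $C^*$-extremity of $\Phi_1\oplus\Phi_2$ via condition (iii) of Theorem \ref{thm-Cext-abstract-char}. Since each $\Phi_j$ is a compression of the irreducible representation $\pi$, write $\Phi_j=\mathrm{Ad}_{V_j}\circ\pi$ with $V_j\in\B{\mcl{H}_j,\mcl{K}}$ an isometry, and set $R_j:=\ran{V_j}\subseteq\mcl{K}$. Because $\pi$ is irreducible, $\cspan\{\pi(\mcl{A})R_j\}=\mcl{K}$, so $(\mcl{K},\pi,V_j)$ is the minimal Stinespring representation of $\Phi_j$; by the second observation preceding the lemma, the conclusion that $\Phi_2$ is a compression of $\Phi_1$ is equivalent to $R_2\subseteq R_1$, i.e. $V_2V_2^*\le V_1V_1^*$. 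First I would dispose of the comparable case: if $R_1$ and $R_2$ are nested, then since $\dim R_2=\dim\mcl{H}_2\le\dim\mcl{H}_1=\dim R_1$ the inclusion can only be $R_2\subseteq R_1$, and we are done. Hence it suffices to assume $R_1,R_2$ are \emph{incomparable} and derive a contradiction.

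Next I would pin down the minimal Stinespring representation of $\Phi:=\Phi_1\oplus\Phi_2$. With $V:=V_1\oplus V_2$ and $\rho:=\pi\oplus\pi$ on $\mcl{K}\oplus\mcl{K}$ one checks directly that $\Phi=\mathrm{Ad}_V\circ\rho$. To see minimality, note that $V(\mcl{H})=R_1\oplus R_2$; choosing $r_1\in R_1$, $r_2\in R_2$ linearly independent (possible precisely because $R_1\ne R_2$, which holds in the incomparable case) and invoking Kadison transitivity for the irreducible $\pi$ (\cite[Theorem 5.3.10]{Li03}) to solve $\pi(a)r_1=\eta_1$, $\pi(a)r_2=\eta_2$ for arbitrary targets, one obtains $\cspan\{\rho(\mcl{A})V(\mcl{H})\}=\mcl{K}\oplus\mcl{K}$. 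Thus $(\mcl{K}\oplus\mcl{K},\rho,V)$ is the minimal Stinespring representation of $\Phi$. The crucial point is now the \emph{real type} hypothesis: since $\pi(\mcl{A})'=\mbb{R}I_{\mcl{K}}$, the commutant is $\rho(\mcl{A})'=\{[c_{ij}I_{\mcl{K}}]:[c_{ij}]\in M_2(\mbb{R})\}\cong M_2(\mbb{R})$. For complex or quaternionic type this would instead be $M_2(\mbb{C})$ or $M_2(\mbb{H})$, which is exactly why the hypothesis is needed.

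Finally I would apply condition (iii) of Theorem \ref{thm-Cext-abstract-char}. Writing $S=[s_{ij}I_{\mcl{K}}]\in\rho(\mcl{A})'$ and $VV^*=P_1\oplus P_2$ with $P_j$ the projection onto $R_j$, a short block computation shows that the invariance constraint $SVV^*=VV^*SVV^*$ amounts to $s_{12}(I-P_1)P_2=0$ and $s_{21}(I-P_2)P_1=0$; since $R_1,R_2$ are incomparable, both $(I-P_1)P_2$ and $(I-P_2)P_1$ are nonzero, forcing $s_{12}=s_{21}=0$, so every admissible $S$ is diagonal and hence $S^*S$ is a diagonal element of $M_2(\mbb{R})$. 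Now take the positive operator $D=\left[\begin{smallmatrix}1&\epsilon\\\epsilon&1\end{smallmatrix}\right]\otimes I_{\mcl{K}}\in\rho(\mcl{A})'$ with $0<\epsilon<1$; then $V^*DV=\left[\begin{smallmatrix}I&\epsilon G\\\epsilon G^*&I\end{smallmatrix}\right]$ with $G=V_1^*V_2$, $\|G\|\le1$, which is invertible. Condition (iii) then demands an $S\in\rho(\mcl{A})'$ with $SVV^*=VV^*SVV^*$ and $S^*S=D$; but $D$ is non-diagonal while every such $S^*S$ is diagonal, a contradiction. Hence $R_1,R_2$ cannot be incomparable, which finishes the proof. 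I expect the main obstacle to be the second step: correctly identifying the minimal Stinespring representation of the direct sum (where the transitivity argument and the distinction between the three types of irreducible representations enter), since the clean $M_2(\mbb{R})$ commutant, and with it the diagonal-versus-non-diagonal dichotomy, hinges entirely on $\pi$ being of real type.
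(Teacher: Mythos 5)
Your proof is correct and follows essentially the same route as the paper's: both identify $\wtilde{\pi}(\mcl{A})'\cong M_2(\mbb{R})\otimes I_{\mcl{K}}$ via the real-type hypothesis, feed a positive invertible $D$ with nonzero off-diagonal blocks into condition (iii) of Theorem \ref{thm-Cext-abstract-char}, and read off a range inclusion $\ran{V_2}\subseteq\ran{V_1}$ or $\ran{V_1}\subseteq\ran{V_2}$ from the constraint $SVV^*=VV^*SVV^*$, settling the direction by comparing dimensions. Your contrapositive packaging (incomparable ranges force $S$ diagonal, contradicting $S^*S=D$) and your choice $D=\left[\begin{smallmatrix}1&\epsilon\\\epsilon&1\end{smallmatrix}\right]\otimes I_{\mcl{K}}$ in place of the paper's $\left[\begin{smallmatrix}2&1\\1&2\end{smallmatrix}\right]\otimes I_{\mcl{K}}$ are only cosmetic differences.
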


\begin{proof}
    Let $(\mcl{K},\pi,V_j)$ be the minimal Stinespring representation for $\Phi_j, j=1,2$. Then $(\wtilde{\mcl{K}},\wtilde{\pi},V)$ is the minimal Stinespring representation for $\Phi_1\oplus\Phi_2$, where $\wtilde{\mcl{K}} = \mcl{K}\oplus\mcl{K}$, $\wtilde{\pi} = \pi \oplus\pi$ and $V=V_1\oplus V_2$. Let $D:= \Matrix{2I_{\mcl{K}}&I_{\mcl{K}}\\ I_{\mcl{K}} & 2I_{\mcl{K}}} \in \wtilde{\pi}(\mcl{A})'$, which is a positive invertible operator. Since $V$ is an isometry, we have $V^*DV = \Matrix{2I_{\mcl{H}_1} & V_1^*V_2\\ V_2^*V_1 & 2I_{\mcl{H}_2}}\in\B{\mcl{H}\oplus\mcl{H}}$ is positive and invertible. As $\Phi_1\oplus\Phi_2$ is a $C^*$-extreme point, by Theorem \ref{thm-Cext-abstract-char}, there exists $S\in\wtilde{\pi}(\mcl{A})'$ such that $S^*S=D$ and $SVV^*=VV^*SVV^*$. Since $\pi$ is of real type, we have $\wtilde{\pi}(\mcl{A})' = (I_2 \otimes \pi(\mcl{A}))' = M_2(\mbb{R}) \otimes I_{\mcl{K}}$. So, there exist scalars $\alpha_j\in\mbb{R}$ such that $S=\Matrix{\alpha_1 I_{\mcl{K}} & \alpha_2 I_{\mcl{K}} \\ \alpha_3 I_{\mcl{K}} & \alpha_4 I_{\mcl{K}}}$. Then
    \begin{align}\label{eq-SVV}
        SVV^*=VV^*SVV^* \implies \Matrix{\alpha_1V_1V_1^* & \alpha_2V_2V_2^* \\ \alpha_3V_1V_1^* & \alpha_4V_2V_2^*} = \Matrix{\alpha_1V_1V_1^*V_1V_1^* & \alpha_2V_1V_1^*V_2V_2^* \\ \alpha_3V_2V_2^*V_1V_1^* & \alpha_4V_2V_2^*V_2V_2^*}.
    \end{align}
    Also, $S^*S= D$ implies that $\alpha_2\neq 0$ or $\alpha_3 \neq 0$. Then, from \eqref{eq-SVV}, it follows that there are two possibilities: 
    \begin{align*}
        V_2V_2^* = V_1V_1^*V_2V_2^*
        \quad\mbox{ or }\quad
        V_1V_1^* = V_2V_2^*V_1V_1^*.
    \end{align*}
    The above identities implies that $\ran{V_2}\subseteq\ran{V_1}$ or $\ran{V_1}\subseteq\ran{V_2}$. Since $\dim(\mcl{H}_j)=\dim(\ran{V_j})$ and $\dim{\mcl{H}_2} \leq \dim{\mcl{H}_1}$, we must have $\ran{V_2}\subseteq\ran{V_1}$. Therefore, $\Phi_2$ is a compression of $\Phi_1$.
\end{proof}

\begin{remark}\label{note-compression}
    If $\mcl{H}$ is finite-dimensional and $\Phi \in \mathrm{UCP}_{C^*-ext}(\mcl{A},\B{\mcl{H}})$, then by Theorem \ref{thm-directsum-pure}, $\Phi$ is unitarily equivalent to a direct sum of pure UCP maps; i.e., there exist finitely many pairwise non-equivalent irreducible representations $\pi_1,\pi_2,\cdots,\pi_k$ of $\mcl{A}$, subspaces $\mcl{H}^i_j$ of $\mcl{H}$ ($1\leq j\leq n_i$), and compressions $\Phi_j^{\pi_i} : \mcl{A}\to\B{\mcl{H}^i_j}$ ($1\leq j\leq n_i$) of each representation $\pi_i$ such that 
    \begin{align}\label{eq-main-H-decomp}
        \mcl{H} = \oplus_{i=1}^k(\oplus_{j=1}^{n_i} \mcl{H}^i_j)
    \end{align}
    and with respect to this decomposition of $\mcl{H}$,
    \begin{align}\label{eq-main-Phi-decomp}
        \Phi \sim \oplus_{i=1}^k (\oplus_{j=1}^{n_i} \Phi_j^{\pi_i}). 
    \end{align}
    Note that since $\pi_i$ is irreducible, each $\Phi_j^{\pi_i}$ is a pure UCP map. Now, for each $1 \leq i \leq k$, we arrange the Hilbert spaces $\mcl{H}^i_1,\cdots,\mcl{H}^i_{n_i}$ such that $\dim(\mcl{H}^i_{j+1})\leq \dim(\mcl{H}^i_j)$ for all $1\leq j\leq n_i-1$. Also note that $\{\oplus_{j=1}^{n_i} \Phi_j^{\pi_i}\}_{i=1}^k$ is a family of mutually disjoint UCP maps as $\{\pi_i\}_{i=1}^k$ is mutually disjoint.
\end{remark}


\begin{theorem}\label{thm-FaZh-MAIN-realtype}
    Let $\mcl{H}$ be finite-dimensional and $\Phi \in \mathrm{UCP}_{C^*-ext}(\mcl{A},\B{\mcl{H}})$. Let $\pi_i$ ($1\leq i \leq k$) , $\mcl{H}_j^i , \Phi_j^{\pi_i}$ ($1\leq j\leq n_i$) be as in \eqref{eq-main-H-decomp} and \eqref{eq-main-Phi-decomp}. If the irreducible representations $\pi_i$'s are of real type, then $\{\Phi_j^{\pi_i}\}_{j=1}^{n_i}$ is a nested sequence of compressions for all $1\leq i\leq k$.
\end{theorem}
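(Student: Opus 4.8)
The plan is to reduce the statement to pairwise comparisons within each isotypic block and then invoke the (unlabelled) Lemma that immediately precedes Remark~\ref{note-compression}. Since $C^*$-extremeness is invariant under unitary equivalence (as noted right after the definition of $C^*$-extreme points), I may assume without loss of generality that $\Phi = \oplus_{i=1}^k(\oplus_{j=1}^{n_i}\Phi_j^{\pi_i})$ is a genuine direct sum decomposed as in \eqref{eq-main-H-decomp} and \eqref{eq-main-Phi-decomp}, with the Hilbert spaces arranged so that $\dim(\mcl{H}^i_{j+1})\leq\dim(\mcl{H}^i_j)$ for each $i$. Recall that each $\Phi_j^{\pi_i}$ is a compression of the irreducible representation $\pi_i$ and, since $\pi_i$ is irreducible, is a pure UCP map. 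By the definition of a nested sequence of compressions, and since $\Phi_1^{\pi_i}$ is already a compression of $\pi_i$ by construction, it therefore suffices to show that $\Phi_{j+1}^{\pi_i}$ is a compression of $\Phi_j^{\pi_i}$ for every $1\leq i\leq k$ and $1\leq j\leq n_i-1$.

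Fix such $i$ and $j$. Using the fact that the summands of a direct sum can be permuted up to unitary equivalence, I would regroup $\Phi$ as $\Phi \sim (\Phi_j^{\pi_i}\oplus\Phi_{j+1}^{\pi_i})\oplus\Psi$, where $\Psi$ collects all the remaining summands and acts on a finite-dimensional space. Since $\Phi\in\mathrm{UCP}_{C^*-ext}(\mcl{A},\B{\mcl{H}})$, Proposition~\ref{prop-CompExtremepoint}, applied to this grouping into two finite-dimensional blocks, yields $\Phi_j^{\pi_i}\oplus\Phi_{j+1}^{\pi_i}\in\mathrm{UCP}_{C^*-ext}(\mcl{A},\B{\mcl{H}^i_j\oplus\mcl{H}^i_{j+1}})$.

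Now both $\Phi_j^{\pi_i}$ and $\Phi_{j+1}^{\pi_i}$ are pure compressions of the same irreducible representation $\pi_i$, which by hypothesis is of real type, and they satisfy $\dim(\mcl{H}^i_{j+1})\leq\dim(\mcl{H}^i_j)<\infty$. Hence all hypotheses of the Lemma preceding Remark~\ref{note-compression} are met, and I conclude that $\Phi_{j+1}^{\pi_i}$ is a compression of $\Phi_j^{\pi_i}$. As this holds for every consecutive pair, the family $\{\Phi_j^{\pi_i}\}_{j=1}^{n_i}$ forms a nested sequence of compressions of $\pi_i$ for each $1\leq i\leq k$, which is the desired conclusion.

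The step I expect to require the most care is the invocation of the Lemma, and specifically the role of the \emph{real type} hypothesis on each $\pi_i$. This hypothesis is what forces the relative commutant of $\pi_i\oplus\pi_i$ appearing in the Lemma's proof to equal $M_2(\mbb{R})\otimes I_{\mcl{K}}$, which in turn makes the operator $S$ supplied by Theorem~\ref{thm-Cext-abstract-char} a genuine $2\times 2$ scalar matrix and produces the clean dichotomy $\ran{V_{j+1}}\subseteq\ran{V_j}$ or $\ran{V_j}\subseteq\ran{V_{j+1}}$; the dimension ordering then selects the former. Without the real-type assumption this commutant would be larger (involving $\mbb{C}$ or $\mbb{H}$), and the argument yielding the nesting would break down. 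This is precisely the obstruction that distinguishes the real case from the complex one, so the proof hinges on keeping the real-type hypothesis in force throughout.
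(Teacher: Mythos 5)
Your proposal is correct and follows essentially the same route as the paper's proof: regroup $\Phi$ as $(\Phi_j^{\pi_i}\oplus\Phi_{j+1}^{\pi_i})\oplus(\text{rest})$, apply Proposition~\ref{prop-CompExtremepoint} to extract $C^*$-extremeness of the two-block summand, and then invoke the lemma preceding Remark~\ref{note-compression} (using purity, the common real-type irreducible $\pi_i$, and the dimension ordering) to get the nesting. Your closing discussion of why the real-type hypothesis is needed matches the paper's own remarks following the theorem.
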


\begin{proof}
    Since $\Phi$ is $C^*$-extreme, it follows from Proposition \ref{prop-disjoint-$C^*$-ext} that  the direct sum $\bigoplus_{j=1}^{n_i} \Phi_j^{\pi_i}$ is a $C^*$-extreme point for each $1\leq i \leq k$. 
    Now fix $1\leq i\leq k$. If $n_i =1$, then there is nothing to prove. So, assume that $n_i \geq 2$. Then, for each $1\leq j\leq n_i$, define $\Phi_1 = \Phi_j^{\pi_i} \oplus \Phi_{j+1}^{\pi_i}$ and let $\Phi_2$ be the remaining direct sums on the right-hand side of equation \eqref{eq-main-Phi-decomp}. Then, by Proposition \ref{prop-CompExtremepoint},  we conclude that $\Phi_j^{\pi_i} \oplus \Phi_{j+1}^{\pi_i}$ is a $C^*$-extreme point of $\mathrm{UCP}(\mcl{A},\B{\mcl{H}_j^i \oplus \mcl{H}_{j+1}^i})$. By the above lemma, $\Phi_{j+1}^{\pi_i}$ is a compression of $\Phi_j^{\pi_i}$. This completes the proof.
\end{proof}

    It is unclear whether the assumption that each $\pi_i$ is of real type is necessary for the above theorem. We also do not have a counterexample to demonstrate otherwise. However, in \cite[Theorem 2.1]{FaZh98}, it is shown that the same conclusion holds for complex $C^*$-algebras without requiring any additional assumptions on the irreducible representations $\pi_i$'s.


    Next, we analyze whether the sufficient condition provided in \cite[Theorem 2.1]{FaZh98} for a UCP map on complex $C^*$-algebra to be a $C^*$-extreme point also holds in the real $C^*$-algebra case. 

\begin{theorem}\label{thm-MAIN-FaZh-realtype-converse}
    Let $\pi_1,\pi_2,\dots,\pi_k$ be pairwise non-equivalent irreducible representations of $\mcl{A}$ and $\{ \Phi_j^{\pi_i}:\mcl{A}\to\B{\mcl{H}_j^i}\}_{j=1}^{n_i}$ be nested sequences of compressions of each representation $\pi_i$, where each $\mcl{H}_j^i$ is a finite-dimensional real Hilbert space. If each $\pi_i$ is of real type, then the direct sum 
    \begin{align*}
        \Phi:=\bigoplus_{i=1}^k \Big(\bigoplus_{j=1}^{n_i} \Phi_j^{\pi_i}\Big)
    \end{align*}
    is a $C^*$-extreme point of $\mathrm{UCP}(\mcl{A},\B{\mcl{H}})$, where $\mcl{H}=\bigoplus_{i=1}^k\bigoplus_{j=1}^{n_i}\mcl{H}_j^i$. (Note that $\Phi_j^{\pi_i}$'s are necessarily pure UCP maps.)
\end{theorem}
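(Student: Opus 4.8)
The plan is to reduce to a single irreducible representation via the disjointness result, and then verify condition (iii) of Theorem~\ref{thm-Cext-abstract-char} by a matricial computation in which a Cholesky factorization, chosen to respect the nested ranges, carries the argument. Since $\pi_1,\dots,\pi_k$ are pairwise non-equivalent irreducible representations, they are mutually disjoint, so $\{\bigoplus_{j=1}^{n_i}\Phi_j^{\pi_i}\}_{i=1}^k$ is a mutually disjoint family of UCP maps. By Proposition~\ref{prop-disjoint-$C^*$-ext} it therefore suffices to prove that each $\bigoplus_{j=1}^{n_i}\Phi_j^{\pi_i}$ is $C^*$-extreme. Thus I would fix a single irreducible representation $\pi:=\pi_i$ of real type on $\mcl{K}$, with its nested sequence of compressions $\{\Phi_j\}_{j=1}^n$ (abbreviating $\Phi_j=\Phi_j^{\pi_i}$), and reduce to showing that $\Psi:=\bigoplus_{j=1}^n\Phi_j$ is $C^*$-extreme.

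Next I would set up the Stinespring data. Each compression is $\Phi_j=\mathrm{Ad}_{V_j}\circ\pi$ for an isometry $V_j:\mcl{H}_j\to\mcl{K}$ with $(\mcl{K},\pi,V_j)$ minimal, and the nested-compression hypothesis becomes $R_n\subseteq\cdots\subseteq R_1$, where $R_j:=\ran{V_j}$. Writing $\wtilde{\pi}:=\bigoplus_{j=1}^n\pi$ on $\mcl{K}^n$ and $V:=\bigoplus_j V_j$, one checks (by zeroing all but one summand of $\mcl{H}$) that $(\mcl{K}^n,\wtilde{\pi},V)$ is the minimal Stinespring representation of $\Psi$. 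Since $\pi$ is of real type, $\pi(\mcl{A})'=\mbb{R}I_{\mcl{K}}$, so $\wtilde{\pi}(\mcl{A})'=M_n(\mbb{R})\otimes I_{\mcl{K}}$ (exactly as in the preceding lemma, via \cite[Theorem 4.4.3]{Li03}); hence every positive $D\in\wtilde{\pi}(\mcl{A})'$ has the form $D=\hat{D}\otimes I_{\mcl{K}}$ with $\hat{D}=[d_{pq}]\in M_n(\mbb{R})$ positive semidefinite, and its compression is the block matrix $V^*DV=[\,d_{pq}V_p^*V_q\,]_{p,q}$, whose diagonal blocks are $d_{pp}I_{\mcl{H}_p}$.

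The crux is the claim that if $V^*DV$ is invertible then $\hat{D}$ is in fact positive definite, and this is precisely where the nesting enters. Indeed, choosing $0\neq z\in R_n$ (so that $z\in R_p$ for every $p$ by nesting) and any $\xi=(\xi_1,\dots,\xi_n)\in\ker\hat{D}$, the vector $y:=(\xi_1 z,\dots,\xi_n z)$ lies in $\ran{V}=\bigoplus_j R_j$ and satisfies $Dy=0$; writing $y=Vx$ gives $V^*DVx=V^*Dy=0$ with $x\neq0$ whenever $\xi\neq0$, so invertibility of $V^*DV$ forces $\ker\hat{D}=\{0\}$. With $\hat{D}>0$ in hand I apply the real Cholesky factorization to write $\hat{D}=\hat{S}^{\,T}\hat{S}$ with $\hat{S}\in M_n(\mbb{R})$ upper triangular and having strictly positive diagonal, and set $S:=\hat{S}\otimes I_{\mcl{K}}\in\wtilde{\pi}(\mcl{A})'$. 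Then $S^*S=D$ is immediate; the upper triangularity of $\hat{S}$ together with $R_q\subseteq R_p$ for $q\geq p$ gives $(Sr)_p=\sum_{q\geq p}\hat{S}_{pq}r_q\in R_p$ for every $r\in\ran{V}$, so $S(\ran{V})\subseteq\ran{V}$, which is exactly $SVV^*=VV^*SVV^*$; and $V^*SV$ is a finite block upper triangular matrix with invertible diagonal blocks $\hat{S}_{pp}I_{\mcl{H}_p}$, hence invertible. This verifies condition (iii) of Theorem~\ref{thm-Cext-abstract-char}, so $\Psi$ is $C^*$-extreme, completing the reduction.

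The main obstacle is the positive-definiteness step. Without the nested structure there need be no common vector $z\in\bigcap_p R_p$ to witness $\ker\hat{D}$, so $V^*DV$ could be invertible while $\hat{D}$ is only semidefinite; and without the real-type hypothesis the commutant would not reduce to $M_n(\mbb{R})$, so no \emph{real} Cholesky factor simultaneously achieving $\hat{S}^{\,T}\hat{S}=\hat{D}$ and respecting the flag $R_n\subseteq\cdots\subseteq R_1$ would be available. Once these two ingredients are secured, the remaining verifications are routine.
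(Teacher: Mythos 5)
Your proposal is correct, but it takes a genuinely different route from the paper. The paper's proof is a two-line reduction to the complex case: since each $\pi_i$ is of real type, the complexifications $(\pi_i)_c$ are pairwise disjoint irreducible representations of $\mcl{A}_c$ and the $(\Phi_j^{\pi_i})_c$ form nested sequences of compressions of them, so \cite[Theorem 2.1]{FaZh98} gives that $\Phi_c$ is $C^*$-extreme, and Proposition \ref{prop-Phi_c-Phi-C^*ext} (which rests on Lemma \ref{lem-unitary-orthogonal}) pulls this back to $\Phi$. You instead work entirely in the real setting: you first split off the distinct $\pi_i$'s via Proposition \ref{prop-disjoint-$C^*$-ext}, and then, for a single real-type $\pi$ with nested compressions, you verify criterion (iii) of Theorem \ref{thm-Cext-abstract-char} directly — identifying $\wtilde{\pi}(\mcl{A})'$ with $M_n(\mbb{R})\otimes I_{\mcl{K}}$, using a common vector $z\in R_n\subseteq\bigcap_p R_p$ to show $V^*DV$ invertible forces $\hat{D}>0$, and producing $S=\hat S\otimes I_{\mcl K}$ from an upper-triangular Cholesky factor whose triangularity meshes with the flag $R_n\subseteq\cdots\subseteq R_1$ to give $S(\ran{V})\subseteq\ran{V}$ and $V^*SV$ invertible. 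All of these steps check out (in particular $(\mcl{K}^n,\wtilde\pi,V)$ is indeed minimal because $\pi$ is irreducible, and $SVV^*=VV^*SVV^*$ is equivalent to invariance of $\ran{V}$). In effect you are re-deriving the Farenick--Zhou sufficiency argument over $\mbb{R}$, which buys a self-contained proof that makes transparent exactly where the real-type hypothesis (a \emph{real} matrix commutant, hence a real Cholesky factor) and the nesting (the common witness vector and the compatibility of triangularity with the flag) are used; the paper's complexification argument is shorter but opaque on these points and depends on the external complex theorem together with Proposition \ref{prop-Phi_c-Phi-C^*ext}.
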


    
\begin{proof}
    Since $\pi_i$'s are pairwise disjoint irreducible representations of $\mcl{A}$ on real Hilbert spaces, say $\mcl{K}_i$, and are of real type, it follows that their complexifications $(\pi_i)_c$'s are pairwise disjoint irreducible representations of the complex $C^*$-algebra $\mcl{A}_c$. Note that each $\Phi_j^{\pi_i}$ has the minimal Stinespring representation $(\mcl{K}_i,\pi_i,V_j^i)$ for some isometry $V_j^i\in\B{\mcl{H}_j^i,\mcl{K}_i}$. Then, it follows that $\big((\mcl{K}_i)_c,(\pi_i)_c,(V_j^i)_c\big)$ is the minimal Stinespring representation of the complexification $(\Phi_j^{\pi_i})_c$; since $\pi_i$ is of real type, $(\Phi_j^{\pi_i})_c$ is a pure map. Furthermore, since $\{\Phi_j^{\pi_i}\}_{j=1}^{n_i}$ is a nested sequence of compressions of $\pi_i$, we have that $\{(\Phi_j^{\pi_i})_c\}_{j=1}^{n_i}$ is a nested sequence of compressions of $(\pi_i)_c$. Hence, from \cite[Theorem 2.1]{FaZh98}, it follows that 
    \begin{align*}
        \Phi_c=\bigoplus_{i=1}^k \Big(\bigoplus_{j=1}^{n_i} (\Phi_j^{\pi_i})_c\Big)
    \end{align*}
    is a $C^*$-extreme point of $\mathrm{UCP}(\mcl{A},\B{\mcl{H}})$. Finally, from Proposition \ref{prop-Phi_c-Phi-C^*ext}, it follows that $\Phi\in\mathrm{UCP}_{C^*-ext}(\mcl{A},\B{\mcl{H}})$. 
\end{proof}

    The assumption that each $\pi_i$ is of real type cannot be omitted from the above theorem. Specifically, the inflation of the pure real state in Example \ref{eg-inflation-complex} (resp. \ref{eg-inflation-quaternion}) is not a $C^*$-extreme point, where only one $\pi_i$ is involved, and is of complex type (resp. quaternionic type). Here is another example. 
    
\begin{example}
    Consider $\Phi:\mbb{H}\to M_4(\mbb{R})$ defined by 
    \begin{align*}
        \Phi(\alpha+\beta\textbf{i}+\gamma\textbf{j}+\delta\textbf{k}):= \Matrix{\alpha&-\beta&0&0\\\beta&\alpha&0&0\\0&0&\alpha&-\beta\\0&0&\beta&\alpha}.
    \end{align*} 
    Observe that $\Phi=\frac{1}{2}\Phi_1 + \frac{1}{2}\Phi_2$,  where $\Phi_1,\Phi_2:\mbb{H}\to M_4(\mathbb{R})$ are the UCP maps (in fact $\ast$-homomorphisms) given by $\Phi_1=\pi$ as in \eqref{eq-piH}, and 
    \begin{align*}
        \Phi_2(\alpha+\beta\textbf{i}+\gamma\textbf{j}+\delta\textbf{k}):=\Matrix{\alpha&-\beta&\gamma&\delta\\
                   \beta&\alpha&\delta&-\gamma\\
                   -\gamma&-\delta&\alpha&-\beta\\
                   -\delta&\gamma&\beta&\alpha
                  }.
    \end{align*}       
    Thus, $\Phi$ is not a linear extreme and hence not a $C^*$-extreme either. Note that $\pi$ is an irreducible representation of quaternionic type and $\Phi_j^{\pi}:\mbb{H}\to M_2(\mbb{R}), j=1,2$, defined by 
    \begin{align*}
        \Phi_j^{\pi}(\alpha+\beta\textbf{i}+\gamma\textbf{j}+\gamma\textbf{k}):=\Matrix{\alpha&-\beta\\\beta&\alpha}
    \end{align*}
    are compressions of $\pi$ such that $\Phi=\Phi_1^{\pi}\oplus\Phi_2^{\pi}$.
\end{example}
 


\begin{corollary}
    Let $\Phi\in\mathrm{UCP}(M_n(\mbb{R}),M_m(\mbb{R}))$. Then $$\Phi\in\mathrm{UCP}_{C^*-ext}(M_n(\mbb{R}),M_m(\mbb{R}))$$ if and only if there exists a nested sequence of compressions $\{ \Phi_j\}_{j=1}^{k}$ of the identity representation on $M_n(\mbb{R})$ such that $\Phi$ is unitarily equivalent to the direct sum $\bigoplus_{j=1}^k \Phi_j$.
\end{corollary}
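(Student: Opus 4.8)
The plan is to reduce both directions of the equivalence to the structural theorems already established, exploiting the fact that $M_n(\mbb{R})$ has only one irreducible representation up to unitary equivalence and that this representation is of real type. Recall (as noted after Theorem \ref{thm-radon-nikodym}) that every irreducible representation of $M_n(\mbb{R})$ is unitarily equivalent to the identity representation $\mathrm{id}:M_n(\mbb{R})\to\B{\mbb{R}^n}$. Since the commutant of $\mathrm{id}(M_n(\mbb{R}))=M_n(\mbb{R})$ in $\B{\mbb{R}^n}$ is $\mbb{R}I_n\cong\mbb{R}$, this representation is of real type. Consequently, in the decomposition supplied by Remark \ref{note-compression} there is a single equivalence class of irreducible representations (i.e.\ $k=1$ in that notation), represented by $\mathrm{id}$, and it automatically satisfies the real-type hypothesis demanded in Theorems \ref{thm-FaZh-MAIN-realtype} and \ref{thm-MAIN-FaZh-realtype-converse}.

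For the forward implication, suppose $\Phi\in\mathrm{UCP}_{C^*-ext}(M_n(\mbb{R}),M_m(\mbb{R}))$. Since $M_m(\mbb{R})=\B{\mbb{R}^m}$ with $\mbb{R}^m$ finite-dimensional, Theorem \ref{thm-directsum-pure} together with Remark \ref{note-compression} writes $\Phi$, up to unitary equivalence, as a direct sum of compressions of irreducible representations; by the previous paragraph every such representation is equivalent to the single real-type representation $\mathrm{id}$. Applying Theorem \ref{thm-FaZh-MAIN-realtype} to this decomposition---whose only irreducible representation $\mathrm{id}$ is of real type---shows that the family of compressions whose direct sum is $\Phi$ forms a nested sequence of compressions of $\mathrm{id}$. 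This is exactly the claimed form.

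For the reverse implication, suppose $\{\Phi_j\}_{j=1}^k$ is a nested sequence of compressions of $\mathrm{id}$ and $\Phi\sim\bigoplus_{j=1}^k\Phi_j$. Since $\mathrm{id}$ is irreducible and of real type, Theorem \ref{thm-MAIN-FaZh-realtype-converse}, applied with the single representation $\mathrm{id}$, shows that $\bigoplus_{j=1}^k\Phi_j$ is a $C^*$-extreme point of $\mathrm{UCP}(M_n(\mbb{R}),M_m(\mbb{R}))$; invariance of $C^*$-extremality under unitary equivalence then yields $\Phi\in\mathrm{UCP}_{C^*-ext}(M_n(\mbb{R}),M_m(\mbb{R}))$. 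There is no substantial obstacle here: the entire content of the corollary is that the real-type hypothesis appearing in both Theorems \ref{thm-FaZh-MAIN-realtype} and \ref{thm-MAIN-FaZh-realtype-converse} is automatic for $M_n(\mbb{R})$, precisely because its unique irreducible representation is the identity, which is of real type. The only point requiring care is confirming that all the pure constituents in the decomposition of $\Phi$ arise from this single equivalence class, so that both theorems may legitimately be invoked with $k=1$.
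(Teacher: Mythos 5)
Your proposal is correct and follows exactly the paper's own argument: the paper's proof is a one-line citation of Theorems \ref{thm-FaZh-MAIN-realtype} and \ref{thm-MAIN-FaZh-realtype-converse} together with the observation that every irreducible representation of $M_n(\mbb{R})$ is unitarily equivalent to the identity representation, which is of real type. Your write-up simply spells out the same reduction (a single equivalence class of irreducible representations, automatically satisfying the real-type hypothesis) in more detail.
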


\begin{proof}
    This follows from Theorems \ref{thm-FaZh-MAIN-realtype}, \ref{thm-MAIN-FaZh-realtype-converse}, and the fact that any irreducible representation on $M_n(\mbb{R})$ is unitarily equivalent to the identity map on $M_n(\mbb{R})$, which is of real type. 
\end{proof}

\section{Commutative real $C^*$-algebra case}\label{sec-CommrealCalgebra}
 
 Throughout this section, let $\mcl{H}$ be a real Hilbert space and $\mcl{A}$ be a unital commutative real $C^*$-algebra, say $\mcl{A}=C(\Omega,-)$ for some compact Hausdorff topological space $\Omega$ and a homeomorphism $-$ of $\Omega$ of period $2$. 
 Recall that for any finite subsets $\{w_j\}_{j=1}^n\subseteq\Omega$ and $\{\lambda_j\}_{j=1}^n\subseteq\mbb{C}$, Urysohn's lemma guarantees the existence of a function $f \in C(\Omega)$ such that $f(w_j)= \lambda_j$, for $1\leq j \leq n$. 


\begin{lemma}\label{lem-points-separation-1} 
    Let $w\in \Omega$. Then $w=\ol{w}$ if and only if $f(w) \in \mbb{R}$ for all $f \in C(\Omega,-)$. 
\end{lemma}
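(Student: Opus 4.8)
The plan is to prove the two implications separately, handling the nontrivial direction by contraposition together with an explicit symmetrization of a function produced by Urysohn's lemma.

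The forward implication is immediate from the defining property of $C(\Omega,-)$. If $w=\ol{w}$, then for every $f\in C(\Omega,-)$ the defining relation $f(\ol{w})=\ol{f(w)}$ reads $f(w)=\ol{f(w)}$, which forces $f(w)\in\mbb{R}$. No further work is needed here.

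For the converse I would argue contrapositively: assuming $w\neq\ol{w}$, I construct a single $f\in C(\Omega,-)$ with $f(w)\notin\mbb{R}$. Since $\Omega$ is compact Hausdorff and $w,\ol{w}$ are now \emph{distinct} points, the interpolation form of Urysohn's lemma recalled at the start of this section yields $g\in C(\Omega)$ with $g(w)=i$ and $g(\ol{w})=0$. As $g$ need not respect the conjugation symmetry, I symmetrize it by setting
\[
 f(z):=\tfrac{1}{2}\bigl(g(z)+\ol{g(\ol{z})}\bigr),\qquad z\in\Omega .
\]
Using that the homeomorphism $-$ has period $2$ (so $\ol{\ol{z}}=z$), a one-line check gives $f(\ol{z})=\tfrac{1}{2}\bigl(g(\ol{z})+\ol{g(z)}\bigr)=\ol{f(z)}$, so $f\in C(\Omega,-)$. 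Evaluating at $w$ yields $f(w)=\tfrac{1}{2}\bigl(g(w)+\ol{g(\ol{w})}\bigr)=\tfrac{1}{2}(i+0)=\tfrac{i}{2}\notin\mbb{R}$, contradicting the hypothesis and establishing the converse.

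The only step requiring genuine care is the symmetrization: one must verify that the averaged function really does satisfy the conjugation-symmetry condition defining $C(\Omega,-)$ while still taking a non-real value at $w$. This is precisely where the period-$2$ property of $-$ and the distinctness $w\neq\ol{w}$ (which is what allows free interpolation at the two points) are both used; the remainder of the argument is routine.
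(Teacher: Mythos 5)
Your proof is correct and follows essentially the same route as the paper: the forward direction is immediate from the defining relation $f(\ol{w})=\ol{f(w)}$, and the converse is handled by Urysohn interpolation at the distinct points $w,\ol{w}$ followed by the symmetrization $z\mapsto g(z)+\ol{g(\ol{z})}$ (the paper omits your factor of $\tfrac12$, which is immaterial).
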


\begin{proof}
    Suppose that $w\neq \ol{w}$. Choose a function $f \in C(\Omega)$ such that $f(w)=i$ and $f(\ol{w})=0$. Define $g:\Omega\to \mbb{C}$ by $g(\xi) := f(\xi) + \ol{f(\ol{\xi})}$ for all $\xi\in\Omega$. Then $g \in C(\Omega,-)$ and $g(w)=i\notin\mbb{R}$.
\end{proof}

\begin{lemma}\label{lem-points-separation-2} 
    Let $w_1,w_2\in\Omega$ be such that $w_1\notin\{w_2,\ol{w_2}\}$.
    \begin{enumerate}[label=(\roman*)]
        \item If $w_j\neq\ol{w_j},j=1,2$, then for any $\lambda_1,\lambda_2\in\mbb{C}$, there exist $f\in C(\Omega,-)$ such that $f(w_j)=\lambda_j$ for all $j=1,2$.
        \item If $w_1=\ol{w_1}$ and $w_2\neq\ol{w_2}$, then for any $\lambda_1\in\mbb{R}$ and $\lambda_2\in\mbb{C}$, there exists $f\in C(\Omega,-)$ such that $f(w_j)=\lambda_j$ for all $j=1,2$.
        \item If $w_j=\ol{w_j},j=1,2$, then for any $\lambda_1,\lambda_2\in\mbb{R}$, there exists $f\in C(\Omega,-)$ such that $f(w_j)=\lambda_j$ for all $j=1,2$.
    \end{enumerate}   
\end{lemma}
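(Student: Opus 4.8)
The plan is to handle all three parts by a single device: apply Urysohn's lemma to obtain a complex-valued $f\in C(\Omega)$ with prescribed values, then symmetrize it into $C(\Omega,-)$. Precisely, given any $f\in C(\Omega)$ I would set
\[
 g(\xi):=\tfrac{1}{2}\bigl(f(\xi)+\ol{f(\ol{\xi})}\bigr),\qquad\xi\in\Omega.
\]
Because the map $-$ has period $2$, so that $\ol{\ol{\xi}}=\xi$, a direct check gives $g(\ol{\xi})=\ol{g(\xi)}$; hence $g\in C(\Omega,-)$. This is the same symmetrization trick used in the proof of Lemma \ref{lem-points-separation-1}. The key remark is that if $f$ has conjugate-symmetric values at the points under consideration, i.e.\ $f(\ol{w})=\ol{f(w)}$ at each relevant $w$, then $g$ agrees with $f$ at those points and therefore inherits the required values.

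With this in hand, each part reduces to (a) checking that the points at which I prescribe values are pairwise distinct, so that Urysohn applies, and (b) feeding the appropriate conjugate-symmetric data into Urysohn. For part (iii), $w_1,w_2$ are two distinct fixed points of $-$; I would take $f$ with $f(w_j)=\lambda_j\in\mbb{R}$ and symmetrize, noting $g(w_j)=\tfrac{1}{2}(\lambda_j+\ol{\lambda_j})=\lambda_j$. For part (ii), the three points $w_1,w_2,\ol{w_2}$ are distinct (since $w_1\neq w_2$ and $w_1\neq\ol{w_2}$ by hypothesis and $w_2\neq\ol{w_2}$); I would prescribe $f(w_1)=\lambda_1\in\mbb{R}$, $f(w_2)=\lambda_2$, $f(\ol{w_2})=\ol{\lambda_2}$, and symmetrization yields $g(w_1)=\lambda_1$ (using that $\lambda_1$ is real) and $g(w_2)=\lambda_2$. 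For part (i), I would impose $f(w_1)=\lambda_1$, $f(\ol{w_1})=\ol{\lambda_1}$, $f(w_2)=\lambda_2$, $f(\ol{w_2})=\ol{\lambda_2}$ and symmetrize to recover $g(w_1)=\lambda_1$, $g(w_2)=\lambda_2$.

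The one step that genuinely uses the structure of $-$ is the distinctness check in part (i): I must verify that $w_1,\ol{w_1},w_2,\ol{w_2}$ are four distinct points. Here $w_1\neq\ol{w_1}$ and $w_2\neq\ol{w_2}$ hold by hypothesis, and $w_1\neq w_2$, $w_1\neq\ol{w_2}$ are given; the remaining coincidences are excluded by applying $-$, namely if $\ol{w_1}=w_2$ then $w_1=\ol{w_2}$, and if $\ol{w_1}=\ol{w_2}$ then $w_1=w_2$, both contradicting the hypotheses. Once distinctness is settled the rest is routine, so I expect this bookkeeping, rather than any analytic difficulty, to be the main (and only mild) obstacle.
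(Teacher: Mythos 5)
Your proof is correct and follows essentially the same route as the paper: Urysohn's lemma on $C(\Omega)$ followed by the symmetrization $f\mapsto f(\cdot)+\ol{f(\ol{\,\cdot\,})}$ (you include a factor $\tfrac{1}{2}$ and prescribe conjugate-symmetric values at the conjugate points, whereas the paper prescribes $0$ there and halves the target values where needed — a purely cosmetic difference). Your explicit distinctness check for $w_1,\ol{w_1},w_2,\ol{w_2}$ is a point the paper leaves implicit, and it is handled correctly.
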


\begin{proof}
    $(i)$ Choose $f\in C(\Omega)$ that satisfies $f(w_j) = \lambda_j$ and $f(\ol{w_j})=0$ for $j=1,2$.  Define $g:\Omega\to \mbb{C}$ by $g(w) := f(w) + \ol{f(\ol{w})}$ for all $w\in\Omega$. Then $g \in C(\Omega,-)$ and $g(w_j)=\lambda_j$. \\
    $(ii)$  Choose $f\in C(\Omega)$ that satisfies $f(w_1)=\frac{\lambda_1}{2}$, $f(w_2)=\lambda_2$ and $f(\ol{w_2})=0$. Define $g:\Omega\to \mbb{C}$ by $g(w) := f(w) + \ol{f(\ol{w})}$ for all $w\in\Omega$. Then $g \in C(\Omega,-)$ and $g(w_j)=\lambda_j$. \\
    $(iii)$  Choose $f\in C(\Omega)$ that satisfies $f(w_j)=\frac{\lambda_j}{2}$ for $j=1,2$. Define $g:\Omega\to \mbb{C}$ by $g(w) := f(w) + \ol{f(\ol{w})}$ for all $w\in\Omega$. Then $g \in C(\Omega,-)$ and $g(w_j)=\lambda_j$.
\end{proof} 

\begin{proposition}\label{prop-ker-of-irr-red}
    Let $\mcl{A}$ be a commutative real $C^*$-algebra and $\pi:\mcl{A} \to \B{\mcl{H}}$ be an irreducible representation. Then $\ker{\pi}$ is a maximal ideal of $\mcl{A}$. In fact, $\mcl{A}/\ker{\pi}\cong \pi(\mcl{A})\cong \mbb{R}$ or $\mbb{C}$.
\end{proposition}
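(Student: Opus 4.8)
The plan is to compute $\pi(\mcl{A})$ explicitly and show it is either $\mbb{R}I_{\mcl{H}}$ or a two-dimensional algebra isomorphic to $\mbb{C}$; the two assertions of the proposition then follow quickly. The starting observation is that, since $\mcl{A}$ is commutative, so is its $\ast$-homomorphic image $\pi(\mcl{A})$, whence $\pi(\mcl{A})\subseteq\pi(\mcl{A})'$. Combined with the irreducibility criterion recalled in Section~\ref{sec-Prelim}, namely $\pi(\mcl{A})'\cap\B{\mcl{H}}_{sa}=\mbb{R}I_{\mcl{H}}$, this forces every self-adjoint element of $\pi(\mcl{A})$ to be a real multiple of $I_{\mcl{H}}$; as $\pi$ is unital, the self-adjoint part of $\pi(\mcl{A})$ is exactly $\mbb{R}I_{\mcl{H}}$. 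Writing each $a\in\mcl{A}$ as $a=a_1+a_2$ with $a_1$ self-adjoint and $a_2$ anti-symmetric, I would then get $\pi(\mcl{A})=\mbb{R}I_{\mcl{H}}+\mcl{M}$, where $\mcl{M}:=\{\pi(a_2):a\in\mcl{A}\}$ is the (real-linear) anti-symmetric part of $\pi(\mcl{A})$.

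The key step is to show that $\mcl{M}$ is at most one-dimensional. Fix $b\in\mcl{M}$; since $b\in\pi(\mcl{A})'$ and $b^*=-b$, the element $b^*b=-b^2$ is a self-adjoint member of $\pi(\mcl{A})$, hence equals $\lambda I_{\mcl{H}}$ for some $\lambda\geq 0$, i.e.\ $b^2=-\lambda I_{\mcl{H}}$. If $\lambda=0$ then $b^*b=0$ and $b=0$. If $b,c\in\mcl{M}$ with $b\neq 0$ (so $b^2=-\lambda I_{\mcl{H}}$ with $\lambda>0$), then $bc=cb$ by commutativity and $(bc)^*=c^*b^*=cb=bc$, so $bc=\mu I_{\mcl{H}}$ for some $\mu\in\mbb{R}$; multiplying by $b$ gives $-\lambda c=b^2c=\mu b$, so $c\in\mbb{R}b$. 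Thus any two elements of $\mcl{M}$ are linearly dependent, and $\mcl{M}$ is either $\{0\}$ or $\mbb{R}J$ for a single $J$ with $J^*=-J$ and $J^2=-I_{\mcl{H}}$.

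Consequently $\pi(\mcl{A})$ is either $\mbb{R}I_{\mcl{H}}\cong\mbb{R}$ or $\mbb{R}I_{\mcl{H}}+\mbb{R}J\cong\mbb{C}$ (via $I_{\mcl{H}}\mapsto 1$, $J\mapsto i$); in particular $\pi(\mcl{A})$ is finite-dimensional, hence a genuine real $C^*$-subalgebra of $\B{\mcl{H}}$. The first isomorphism theorem for $\ast$-homomorphisms then yields $\mcl{A}/\ker{\pi}\cong\pi(\mcl{A})\cong\mbb{R}$ or $\mbb{C}$. Finally, since $\mbb{R}$ and $\mbb{C}$ are fields and therefore have no proper nonzero ideals, $\mcl{A}/\ker{\pi}$ is simple, so $\ker{\pi}$ is a maximal ideal. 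I expect the one-dimensionality of $\mcl{M}$ to be the only real obstacle: it is exactly the place where one must rule out the ``quaternionic-type'' possibility, and the computation above is a hands-on substitute for invoking the classification $\pi(\mcl{A})'\cong\mbb{R},\mbb{C}$ or $\mbb{H}$, which would otherwise require separately checking that a commutative $\ast$-subalgebra of $\mbb{H}$ cannot exceed $\mbb{C}$.
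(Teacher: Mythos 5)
Your proof is correct, and it reaches the classification $\pi(\mcl{A})\cong\mbb{R}$ or $\mbb{C}$ by a genuinely different route from the paper. Both arguments open the same way: commutativity gives $\pi(\mcl{A})\subseteq\pi(\mcl{A})'$, and the irreducibility criterion $\pi(\mcl{A})'\cap\B{\mcl{H}}_{sa}=\mbb{R}I_{\mcl{H}}$ pins down the self-adjoint part of the image. From there the paper shows that every nonzero $T\in\pi(\mcl{A})$ is invertible with $T^{-1}=\norm{T}^{-2}T^*$, concludes that $\pi(\mcl{A})$ is a divisible real $C^*$-algebra, and cites the classification \cite[Proposition 5.6.6]{Li03} to land on $\mbb{R}$ or $\mbb{C}$ (commutativity excluding $\mbb{H}$); it also establishes $\mcl{A}/\ker{\pi}\cong\pi(\mcl{A})$ up front by a complexification argument showing that the induced faithful representation of the quotient is isometric, hence that $\pi(\mcl{A})$ is closed. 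You instead decompose $\pi(\mcl{A})=\mbb{R}I_{\mcl{H}}+\mcl{M}$ and prove by a direct computation ($b^2=-\lambda I_{\mcl{H}}$, $bc=\mu I_{\mcl{H}}$, hence $c\in\mbb{R}b$) that the anti-symmetric part $\mcl{M}$ is at most one-dimensional. What your approach buys is self-containedness: the classification step is bare-hands rather than a citation, and the closedness of $\pi(\mcl{A})$ (needed to identify $\mcl{A}/\ker{\pi}$ with $\pi(\mcl{A})$ as real $C^*$-algebras) comes for free from finite-dimensionality, so the complexification/isometry argument is not needed. What it gives up is generality: the paper's divisibility argument and the accompanying quotient machinery are the ones that would survive in settings where the image is not forced to be finite-dimensional from the outset. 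Both derivations of maximality of $\ker{\pi}$ amount to the same observation that the quotient is simple.
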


\begin{proof}
    Let $\pi:\mcl{A}\to \B{\mcl{H}}$ be any representation. Then $J := \ker{\pi}$ is a closed two-sided ideal of $\mcl{A}$ and by \cite[Proposition 5.4.1]{Li03}, $\mcl{A}/J$ is a real $C^*$-algebra. Then the map $\pi$ factors through $\mcl{A}/J$ in the natural way:
    \begin{equation*}
        \begin{tikzcd}
            \mcl{A} \arrow{r}{\mbf{q}} \arrow[swap]{dr}{\pi} & \mcl{A}/J \arrow{d}{\wtilde{\pi}} \\ & \B{\mcl{H}}
        \end{tikzcd}
    \end{equation*}
    Here $\mbf{q}$ is the quotient map and $\widetilde{\pi}(f+J):=\pi(f)$ for all $f\in \mcl{A}$. Note that $\widetilde{\pi}$ is a faithful representation of $\mcl{A}/J$ on $\mcl{H}$. Then the representation $\widetilde{\pi}_c$ is faithful, which implies that $\widetilde{\pi}_c$ is isometric. Consequently, $\widetilde{\pi}$ is isometric. Then $\widetilde{\pi}(\mcl{A}/J) = \pi(\mcl{A})$ is a real $C^*$-algebra and hence $\mcl{A}/J \cong \pi(\mcl{A})$. 
    Since $\mcl{A}$ is commutative, $\pi(\mcl{A})$ is commutative. Then we have $\pi(\mcl{A}) \subseteq \pi(\mcl{A})'$. Now assume that $\pi$ is irreducible. From \cite[Proposition 5.3.7]{Li03}, we have $\pi(\mcl{A})'\cap\B{\mcl{H}}_{sa}=\mbb{R}I_\mcl{H}$. Then for any $0 \neq T \in \pi(\mcl{A})\subseteq\pi(\mcl{A})'$, we have $T^*T = TT^* = \norm{T}^2 I$. This shows that $T$ is invertible and $T^{-1} = \frac{1}{\norm{T}^2} T^* \in \pi(\mcl{A})$. Therefore, $\pi(\mcl{A})$ is divisible and hence by \cite[Proposition 5.6.6]{Li03}, $\pi(\mcl{A}) \cong \mbb{R} \text{ or } \mbb{C}$. In particular, $J$ is a maximal ideal of $\mcl{A}$ because $\mcl{A}/J$ is divisible.
\end{proof}

\begin{lemma}\label{lem-irr-rep-dim}
    Let $\pi: \mbb{C} \to \B{\mcl{H}}$ be an irreducible representation. Then $\dim{\mcl{H}} =2$ and $\pi:\mbb{C}\to \B{\mcl{H}}\cong M_2(\mbb{R})$ is given by
    \begin{align*}
        \pi(\lambda)=\Matrix{Re \lambda &&  \pm Im \lambda \\ \mp Im \lambda && Re \lambda},
        \qquad\forall~\lambda\in\mbb{C}.
    \end{align*}
\end{lemma}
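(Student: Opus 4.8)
The plan is to reduce the entire statement to the single operator $J:=\pi(i)$, where $i\in\mbb{C}$ is the imaginary unit viewed as an element of the real $C^*$-algebra $\mbb{C}$. Since $\pi$ is a unital $\ast$-homomorphism and, in $\mbb{C}$, one has $i^*=-i$ and $i^2=-1$, the operator $J$ satisfies $J^*=-J$ and $J^2=-I_{\mcl{H}}$. Because $\pi$ is real-linear, its image is completely determined by $J$ via $\pi(a+bi)=aI_{\mcl{H}}+bJ$ for all $a,b\in\mbb{R}$. Thus everything is encoded in the geometry of one skew-adjoint operator whose square is $-I_{\mcl{H}}$.

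First I would establish that $\dim\mcl{H}=2$ directly from the definition of irreducibility. Fix a unit vector $x\in\mcl{H}$ (possible since an irreducible representation is nonzero) and set $\mcl{M}:=\lspan\{x,Jx\}$. A one-line check shows $\mcl{M}$ is $\pi(\mbb{C})$-invariant: $\pi(a+bi)x=ax+bJx\in\mcl{M}$ and $\pi(a+bi)(Jx)=aJx+bJ^2x=aJx-bx\in\mcl{M}$. The vectors $x$ and $Jx$ are linearly independent, since $Jx=\lambda x$ would give $\lambda^2x=J^2x=-x$, impossible for $\lambda\in\mbb{R}$; hence $\mcl{M}$ is a nonzero, finite-dimensional (therefore closed) invariant subspace with $\dim\mcl{M}=2$. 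By irreducibility $\mcl{M}=\mcl{H}$, so $\dim\mcl{H}=2$. (The same obstruction rules out $\dim\mcl{H}=1$, as no real scalar squares to $-1$.)

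Next I would pin down the matrix form. The key observation is that $\{x,Jx\}$ is already orthonormal: from $J^*=-J$ we get $\ip{x,Jx}=\ip{-Jx,x}=-\ip{x,Jx}$, so $x\perp Jx$, and $\norm{Jx}^2=\ip{x,-J^2x}=\norm{x}^2=1$. Writing operators in this orthonormal basis $e_1:=x$, $e_2:=Jx$, we have $Je_1=Jx=e_2$ and $Je_2=J^2x=-x=-e_1$, so $J=\Matrix{0&-1\\1&0}$ and therefore $\pi(a+bi)=aI+bJ=\Matrix{a&-b\\b&a}$, which is the stated form for one choice of sign. The opposite sign arises from reversing the orientation of the basis (replacing $\{x,Jx\}$ by $\{x,-Jx\}$, i.e.\ conjugating by $\mathrm{diag}(1,-1)$), which accounts for the $\pm$ appearing in the statement.

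There is essentially no hard step here. The only point that needs care is the invariant-subspace argument: one must verify both that $\lspan\{x,Jx\}$ is genuinely $\pi(\mbb{C})$-invariant and that it is two-dimensional rather than one-dimensional, so that irreducibility yields $\mcl{M}=\mcl{H}$ and hence $\dim\mcl{H}=2$ on the nose. Once $J$ is identified with a $2\times2$ skew-symmetric matrix squaring to $-I$, the remaining normalization is routine.
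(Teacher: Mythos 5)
Your proposal is correct and follows essentially the same route as the paper: both reduce everything to the single skew-adjoint operator $\pi(i)$ (an anti-symmetric unitary, equivalently a skew-adjoint square root of $-I$), observe that $\lspan\{v,\pi(i)v\}$ is a two-dimensional invariant subspace forcing $\dim\mcl{H}=2$ by irreducibility, and then identify the two possible matrix forms. The only cosmetic difference is that you compute $\pi(i)$ in the orthonormal basis $\{x,\pi(i)x\}$, whereas the paper enumerates the anti-symmetric unitaries of $M_2(\mbb{R})$ directly; both yield the same $\pm$ dichotomy.
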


\begin{proof}
    Let $\pi:\mbb{C}\to\B{\mcl{H}}$ be any representation. Since $\pi$ is unital and self-adjoint, we have $\pi(1)=I$ and $\pi(i) = S$, where $S\in\B{\mcl{H}}$  with $S^*=-S$. Moreover, since $\pi$ is multiplicative, we must have
    \begin{align*}
        S^*S 
             = \pi(-i\cdot i)
             = I 
             = \pi(i\cdot -i) 
             = SS^*. 
    \end{align*}
    Thus, $S\in\B{\mcl{H}}$ is an anti-symmetric unitary. Since $\pi$ is a real linear map on $\mbb{C}$,
    \begin{align*}
        \pi(\lambda)= Re (\lambda) I + Im (\lambda) S,\qquad\forall~\lambda\in\mbb{C}.
    \end{align*}
    Now, let $0\neq v\in\mcl{H}$. Since $S$ is a unitary, we get $Sv\neq 0$, and since $S$ is anti-symmetric, it follows that $\ip{Sv,v} = 0$. Thus, $\{v,Sv\}$ is an orthogonal subset of $\mcl{H}$. Clearly, $\lspan{\{v,Sv\}}$ is a two-dimensional $\pi(\mbb{C})$-invariant subspace of $\mcl{H}$. Therefore, if $\pi$ is irreducible, then $\dim{\mcl{H}}=2$. Identifying $\B{\mcl{H}}\cong M_2(\mbb{R})$, there are only two anti-symmetric unitary operators in $\B{\mcl{H}}$, namely $S = \sMatrix{0& \pm1 \\ \mp1 & 0}$. Hence, $\pi$ has the desired form. 
\end{proof}

    The following result is of a fundamental nature and may have been previously established in the literature. However, we could not find a direct reference to it. Therefore, we present and prove it here. 
    
\begin{proposition}\label{prop-irr-repre} 
    Let $\pi: \mcl{A} \to \B{\mcl{H}}$ be an irreducible representation. Then, $dim(\mcl{H})\leq 2$ and $\pi$ must be one of the following form:
    \begin{enumerate}[label=(\roman*)]
        \item Real type: If $\dim(\mcl{H})=1$, then there exists $w=\ol{w}\in\Omega$ such that $\pi:\mcl{A}\to\B{\mcl{H}}\cong\mbb{R}$ is given by 
        \begin{align}\label{eq-irr-rep-1dim}
                  \pi(f)=f(w),\qquad\forall~f\in \mcl{A}.
              \end{align}
        \item Complex type: If $\dim(\mcl{H})=2$, then there exists $w\neq\ol{w}\in\Omega$ such that $\pi:\mcl{A}\to\B{\mcl{H}}\cong M_2(\mbb{R})$ is given by 
        \begin{align}\label{eq-irr-rep-2dim}
                  \pi(f):=\Matrix{Re f(w) & \pm Im f(w)\\ \mp Im f(w) & Re f(w)},\qquad\forall~f\in \mcl{A}.
              \end{align}
    \end{enumerate}
\end{proposition}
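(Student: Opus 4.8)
The plan is to use Proposition \ref{prop-ker-of-irr-red}, which already tells us that for an irreducible representation $\pi$ of the commutative algebra $\mcl{A}$ one has $\pi(\mcl{A})\cong\mbb{R}$ or $\pi(\mcl{A})\cong\mbb{C}$. I would split the argument according to these two cases, and in each case first pin down $\dim(\mcl{H})$ and then identify $\pi$ with point evaluation using the separation lemmas and the complexification dictionary $\mcl{A}_c\cong C(\Omega)$.

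Suppose first $\pi(\mcl{A})\cong\mbb{R}$. Since $\pi(\mcl{A})$ is a one-dimensional unital subalgebra of $\B{\mcl{H}}$ containing $\pi(1)=I_{\mcl{H}}$, it must equal $\mbb{R}I_{\mcl{H}}$; hence its commutant is all of $\B{\mcl{H}}$, and the irreducibility criterion $\pi(\mcl{A})'\cap\B{\mcl{H}}_{sa}=\mbb{R}I_{\mcl{H}}$ forces $\B{\mcl{H}}_{sa}=\mbb{R}I_{\mcl{H}}$, so $\dim(\mcl{H})=1$. Thus $\pi$ is a character $\psi:\mcl{A}\to\mbb{R}$, and I would pass to the complexification $\psi_c:\mcl{A}_c\cong C(\Omega)\to\mbb{C}$, a multiplicative character of $C(\Omega)$, hence evaluation $F\mapsto F(w)$ for some $w\in\Omega$. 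Tracing the identification $f_1+\hat{i}f_2\mapsto f_1+if_2$ and setting $f_2=0$ gives $\psi(f)=f(w)$; since $\psi(f)\in\mbb{R}$ for every $f$, Lemma \ref{lem-points-separation-1} yields $w=\ol{w}$, which is exactly \eqref{eq-irr-rep-1dim}.

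The substantive case is $\pi(\mcl{A})\cong\mbb{C}$. Here I would factor $\pi$ through $\mbb{C}$: fix a $\ast$-isomorphism $\theta:\mbb{C}\to\pi(\mcl{A})\subseteq\B{\mcl{H}}$, put $\rho:=\theta$ (a representation of $\mbb{C}$ on $\mcl{H}$) and $\psi:=\theta^{-1}\circ\pi:\mcl{A}\to\mbb{C}$, so that $\pi=\rho\circ\psi$ with $\psi$ a surjective unital $\ast$-homomorphism. Because $\rho(\mbb{C})=\pi(\mcl{A})$, the two representations share the same commutant, so $\rho$ is irreducible; Lemma \ref{lem-irr-rep-dim} then delivers $\dim(\mcl{H})=2$ together with the matrix form of $\rho$. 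It then remains to show $\psi(f)=f(w)$ for some $w\neq\ol{w}$, since this makes $\pi(f)=\rho(f(w))$ exactly \eqref{eq-irr-rep-2dim} (the $\pm$ sign in the matrix absorbs any conjugation ambiguity, so even an identification of $\psi$ with $f\mapsto\ol{f(w)}$ would be harmless).

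Establishing $\psi(f)=f(w)$ is the step I expect to be the main obstacle, as it requires careful bookkeeping of the complexification identifications. I would complexify $\psi$ to $\psi_c:\mcl{A}_c\to\mbb{C}_c$; under $\mcl{A}_c\cong C(\Omega)$ and $\mbb{C}_c\cong\mbb{C}^2$ (via $\lambda+\hat{i}\mu\mapsto(\lambda+i\mu,\ol{\lambda}+i\ol{\mu})$) this becomes a unital $\ast$-homomorphism $C(\Omega)\to\mbb{C}\oplus\mbb{C}$, hence a pair of evaluation characters at points $w_1,w_2\in\Omega$. Comparing first coordinates on $f_1+\hat{i}f_2$ gives $\psi(f_1)+i\psi(f_2)=f_1(w_1)+if_2(w_1)$, and taking $f_2=0$ yields $\psi(f)=f(w_1)=:f(w)$. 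Finally, surjectivity of $\psi$ onto $\mbb{C}$ produces an $f$ with $f(w)\notin\mbb{R}$, so $w\neq\ol{w}$ by Lemma \ref{lem-points-separation-1}; the second coordinate together with Lemma \ref{lem-points-separation-2} confirms $w_2=\ol{w}$, completing the identification. Combining both cases gives $\dim(\mcl{H})\leq2$ and the stated forms.
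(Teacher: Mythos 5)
Your proposal is correct and follows essentially the same route as the paper: both reduce to Proposition \ref{prop-ker-of-irr-red}, split into the cases $\pi(\mcl{A})\cong\mbb{R}$ and $\pi(\mcl{A})\cong\mbb{C}$, identify the induced character with point evaluation by complexifying and using $\mcl{A}_c\cong C(\Omega)$, and invoke Lemmas \ref{lem-points-separation-1} and \ref{lem-irr-rep-dim} exactly where the paper does. The only cosmetic difference is that you factor $\pi$ through its image $\pi(\mcl{A})$ while the paper factors through $\mcl{A}/\ker{\pi}$, which are isomorphic, so the arguments coincide.
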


\begin{proof}
   Since $\pi$ is irreducible, by Proposition \ref{prop-ker-of-irr-red}, $J:=\ker{\pi}$ is a maximal ideal and $\mcl{A}/J \cong \mbb{R} \text{ or } \mbb{C}$. Note that $\pi$ factors through $\mcl{A}/J$ as $\pi=\widetilde{\pi}\circ\mbf{q}$, where $\mbf{q}:\mcl{A}\to\mcl{A}/J$ is the quotient map and $\widetilde{\pi}:\mcl{A}/J\to\B{\mcl{H}}$ is given by $\widetilde{\pi}(f+J):=\pi(f)$ for all $f\in\mcl{A}$. As $\pi$ is irreducible, it follows that $\wtilde{\pi}$ is an irreducible representation. Also, $\mbf{q}$ is a non-zero multiplicative real linear functional on $\mcl{A}$. Hence, the map $\widetilde{\mbf{q}}:\mcl{A}_c\to\mbb{C}$ defined by 
    \begin{align*}
        \widetilde{\mbf{q}}(f+ig):=\mbf{q}(f)+i\mbf{q}(g),\qquad\forall~f,g\in \mcl{A},
    \end{align*}
    is a non-zero multiplicative complex linear functional. Since $\mcl{A}_c=C(\Omega)$, there exists $w\in\Omega$ such that $\widetilde{\mbf{q}}(f)=f(w)$ for all $f\in C(\Omega)$. In particular, $\mbf{q}(f)=f(w)$ for all $f\in\mcl{A}$.\\
    \ul{Case (1):} Suppose $\mcl{A}/J \cong \mbb{R}$. Then, $f(w)=\mbf{q}(f)\in\mbb{R}$ for all $f\in \mcl{A}$ and hence, by Lemma \ref{lem-points-separation-1}, $w\in\Omega$ must be such that $w=\ol{w}$. Since the only irreducible representation of $\mbb{R}$, up to unitary equivalence, is the identity map on $\mbb{R}$, it follows that $\mcl{H}\cong\mbb{R}$ and $\wtilde{\pi}:\mbb{R}\to\mbb{R}$ is the identity map. Thus, in this case, $\pi:\mcl{A}\to\mbb{R}$ and is given by
    \begin{align*}
        \pi(f)=\widetilde{\pi}\circ\mbf{q}(f)=f(w),\qquad\forall~f\in \mcl{A}.
    \end{align*}
    \ul{Case (2):} Suppose $\mcl{A}/J \cong \mbb{C}$. Since $q$ is surjective, by Lemma \ref{lem-points-separation-1}, we get $w\neq\ol{w}$. Furthermore, since $\widetilde{\pi}$ is an irreducible representation of $\mbb{C}=\mcl{A}/J$, it follows from Lemma \ref{lem-irr-rep-dim} that $\dim(\mcl{H})=2$ and the representation $\widetilde{\pi}:\mbb{C}\to \B{\mcl{H}}\cong M_2(\mbb{R})$ is given by
    \begin{align*}
        \widetilde{\pi}(\lambda)=\Matrix{Re \lambda &&  \pm Im \lambda \\ \mp Im \lambda && Re \lambda},
        \qquad\forall~\lambda\in\mbb{C}.
    \end{align*}
    Thus $\pi:\mcl{A}\to\B{\mcl{H}}\cong M_2(\mbb{R})$ is given by
    \begin{align*}
        \pi(f)=\widetilde{\pi}\circ\mbf{q}(f)=\Matrix{Re f(w) &&  \pm Im f(w) \\ \mp Im f(w) && Re f(w)},\qquad\forall~f\in \mcl{A}.
    \end{align*}
    This completes the proof.
\end{proof}

    It is known, from (\cite[Proposition 5.3.2]{Li03}), that pure real states of $C(\Omega,-)$ have the form $f\mapsto Re f(w)$ for some $w\in\Omega$. The following corollary is a direct consequence of the above proposition.

\begin{corollary} \label{cor-puremaps}
    If $\Phi:\mcl{A}\to \B{\mcl{H}}$ is any pure UCP map, then one of the following happens: 
    \begin{enumerate}[label=(\roman*)]
    \item $\dim(\mcl{H})=1$ and there exists $w\in\Omega$ such that $\Phi:\mcl{A}\to\B{\mcl{H}}\cong\mbb{R}$ is given by 
    \begin{align*}
             \Phi(f):=Re f(w),\qquad\forall~f\in \mcl{A}.
    \end{align*}
    \item $\dim(\mcl{H})=2$ and there exists $w\neq \ol{w}\in\Omega$ such that $\Phi:\mcl{A}\to\B{\mcl{H}}\cong M_2(\mbb{R})$ is given by
    \begin{align}\label{eq-pureUCP-2}
            \Phi(f)=\Matrix{Re f(w) & \pm Im f(w)\\ \mp Im f(w) & Re f(w)},\qquad\forall~f\in \mcl{A}.
    \end{align}  
    \end{enumerate}
\end{corollary}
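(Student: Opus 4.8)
The plan is to reduce everything to the classification of irreducible representations already obtained in Proposition~\ref{prop-irr-repre}. First I would fix a minimal Stinespring representation $(\mcl{K},\pi,V)$ of the pure UCP map $\Phi$, so that $\Phi(\cdot)=V^*\pi(\cdot)V$ with $V:\mcl{H}\to\mcl{K}$ an isometry (as $\Phi$ is unital). Since $\Phi$ is pure, Theorem~\ref{thm-radon-nikodym}(ii) guarantees that $\pi$ is irreducible. Consequently Proposition~\ref{prop-irr-repre} applies, giving $\dim(\mcl{K})\leq 2$ with $\pi$ of exactly one of the two listed forms; because $V$ is an isometry we also get $\dim(\mcl{H})\leq\dim(\mcl{K})\leq 2$. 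The remainder of the proof is then a short case analysis on the pair $(\dim\mcl{K},\dim\mcl{H})$.

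If $\dim(\mcl{K})=1$, then $\pi$ is of real type, so there is $w=\ol{w}\in\Omega$ with $\pi(f)=f(w)$, and by Lemma~\ref{lem-points-separation-1} every value $f(w)$ is real. Here $\dim(\mcl{H})=1$ and $V=\pm 1$, so $\Phi(f)=f(w)=Re f(w)$, which is form (i). Next, if $\dim(\mcl{K})=2$ (complex type) but $\dim(\mcl{H})=1$, then $V$ corresponds to a unit vector $v\in\mcl{K}\cong\mbb{R}^2$, and writing $\pi(f)=Re f(w)\,I+Im f(w)\,J$ with $J$ an anti-symmetric orthogonal matrix, a direct computation gives $\Phi(f)=\ip{\pi(f)v,v}=\norm{v}^2 Re f(w)=Re f(w)$; thus this subcase also collapses to form (i). (Minimality is not violated: since $\pi$ is irreducible and $v\neq 0$, $\cspan\{\pi(\mcl{A})v\}=\mcl{K}$.)

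The remaining case is $\dim(\mcl{K})=\dim(\mcl{H})=2$, where the isometry $V$ is in fact a unitary (orthogonal) matrix and $\Phi=\mathrm{Ad}_V\circ\pi$. Writing again $\pi(f)=Re f(w)\,I+Im f(w)\,J$, conjugation yields $\Phi(f)=Re f(w)\,I+Im f(w)\,V^*JV$. The key observation---and the only point requiring a little care---is that $V^*JV$ is again anti-symmetric (since $(V^*JV)^*=-V^*JV$) and orthogonal (a product of orthogonal matrices), and there are exactly two such $2\times 2$ matrices, namely $\sMatrix{0&1\\-1&0}$ and $\sMatrix{0&-1\\1&0}$; this fixes the $\pm$ sign and shows $\Phi$ has the form (ii). I do not anticipate any serious obstacle: the argument is entirely driven by Proposition~\ref{prop-irr-repre}, and the only real bookkeeping is verifying that conjugation by $V$ preserves the anti-symmetric orthogonal structure in the two-dimensional case, together with checking that the one-dimensional compressions of a complex-type representation always reduce to the state $f\mapsto Re f(w)$.
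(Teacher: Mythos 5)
Your argument is correct and follows essentially the same route as the paper: pass to the minimal Stinespring representation, invoke the classification of irreducible representations from Proposition~\ref{prop-irr-repre}, and split into cases according to $\dim(\mcl{K})$ and $\dim(\mcl{H})$. The only cosmetic difference is in the final case, where you observe abstractly that $V^*JV$ remains anti-symmetric and orthogonal (hence is one of the two such $2\times 2$ matrices), while the paper enumerates the two possible forms of the orthogonal matrix $V$ and computes directly; both settle the $\pm$ sign in \eqref{eq-pureUCP-2}.
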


\begin{proof}
    Let $(\mcl{K},\pi,V)$ be the minimal Stinespring representation of $\Phi$. Since $\pi:\mcl{A}\to\B{\mcl{K}}$ is irreducible, by Proposition \ref{prop-irr-repre}, we have $\mcl{K}\cong\mbb{R}$ or $\mbb{R}^2$.\\
    \ul{Case (1):} Suppose $\mcl{K}\cong\mbb{R}$. Then there exists $w=\ol{w}\in\Omega$ such that $\pi(f)=f(w)$ for all $f\in C(\Omega,-)$. Hence, $\Phi(f)=V^*\pi(f)V= f(w)$ for all $f\in \mcl{A}$.  \\
    \ul{Case (2):} Suppose $\mcl{K}\cong\mbb{R}^2$. Then $\pi$ must be of the form \eqref{eq-irr-rep-2dim}  for some $w\neq \ol{w}\in\Omega$. Since $V:\mcl{H}\to\mcl{K}$ is an isometry, we must have $\mcl{H}\cong\mbb{R}$ or $\mbb{R}^2$. If $\mcl{H}\cong\mbb{R}$, then $V$ can be identified with a matrix, say $V=\sMatrix{s\\t}\in M_{2\times 1}(\mbb{R})$, where $s,t\in\mbb{R}$ such that $s^2+t^2=1$. It can be easily verified that $\Phi(f)=V^*\pi(f)V=Re f(w)$, for all $f\in\mcl{A}$. Now, if $\mcl{H}\cong\mbb{R}^2$, then $V$ can be identified with a unitary matrix in $M_2(\mbb{R})$, i.e., either $V=\sMatrix{\cos(\theta)&\sin(\theta)\\ \sin(\theta)&-\cos(\theta)}$ or $V=\sMatrix{\cos(\theta)&\sin(\theta)\\ -\sin(\theta)&\cos(\theta)}$ for some $\theta\in\mbb{R}$.  A direct computation will show that $\Phi$ has the form \eqref{eq-pureUCP-2}.
\end{proof}

\noindent\textbf{Notation.}
    Let $w\in\Omega$. Define the pure UCP map $\rho_w:\mcl{A}\to\mbb{R}$ by
        \begin{align}\label{eq-pure-UCP-1}
            \rho_w(f):=Re f(w),\qquad\forall~f\in \mcl{A}.
        \end{align}
        If $w\neq\ol{w}$ define the pure UCP map $\Pi_w:\mcl{A}\to M_2(\mbb{R})$ by
        \begin{align}\label{eq-pure-UCP-2}
            \Pi_w(f):=\Matrix{Re f(w) & Im f(w)\\ - Im f(w) & Re f(w)},\qquad\forall~f\in \mcl{A}.
    \end{align}
    Note that for any $w\neq\ol{w}\in\Omega$, we have $\Pi_{\ol{w}}(f) = \sMatrix{Re f(w)& -Im f(w)\\ Im f(w) & Re f(w)}$. Thus, $\Pi_w$ is unitarily equivalent to $\Pi_{\ol{w}}$. Clearly, $\rho_w$ is a compression of $\Pi_w$ for any $w\neq\ol{w}$. 
     
\begin{definition}
    Let $\{\Phi_j\}_j$ be a family of finitely many pure UCP maps $\Phi_j:\mcl{A}\to\B{\mcl{H}_j}$.  For any $w\in\Omega$, the direct sum $\bigoplus_{j}\Phi_j \in \mathrm{UCP}(\mcl{A},\B{\bigoplus_j\mcl{H}_j})$ is said to be of \it{type-$w$} if one of the following  conditions holds:
    \begin{enumerate}[label=(\roman*)]
        \item $w=\ol{w}$ and $\Phi_j = \rho_w$ for all $j$, or
        \item $w\neq\ol{w}$ and $\Phi_j \in \{\rho_w,\Pi_w,\Pi_{\ol{w}} \}$ for all $j$.
    \end{enumerate}
\end{definition}

    It is important to note that, in the above definition, each $\mcl{H}_j$ is necessarily isomorphic to $\mbb{R}$ or $\mbb{R}^2$. Suppose $w=\ol{w}\in\Omega$. Then, by Lemma \ref{lem-points-separation-1}, we have $\rho_w(f) = f(w)$ for all $f\in\mcl{A}$. Consequently, any type-$w$ map is multiplicative, and therefore, a $C^*$-extreme point of $\mathrm{UCP}(\mcl{A},\B{\mcl{H}})$. The following result is the crucial step in characterizing the $C^*$-extreme points of $\mathrm{UCP}(\mcl{A},\B{\mcl{H}})$.  

\begin{proposition} \label{prop-rho-pi-Cextt}
    Let $w\neq \ol{w}\in \Omega$ and $\Phi=\bigoplus_j\Phi_j$ be a type-$w$ UCP map, where $\Phi_j:\mcl{A}\to\B{\mcl{H}_j}$ is pure. Then $\Phi\in \mathrm{UCP}_{C^*-ext}(\mcl{A},\B{\bigoplus_j\mcl{H}_j})$ if and only if $\Phi_j=\rho_w$ for at most one $j$. 
\end{proposition}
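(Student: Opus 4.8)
The plan is to establish necessity by a short reduction and sufficiency through the abstract criterion in Theorem~\ref{thm-Cext-abstract-char}(iii). Throughout I would use that, since $w\neq\ol{w}$, the associated irreducible representation $\pi:\mcl{A}\to M_2(\mbb{R})$, $\pi(f)=\SMatrix{Re f(w)&Im f(w)\\-Im f(w)&Re f(w)}$, is of complex type, so that $\pi(\mcl{A})'\cong\mbb{C}$, and that $\rho_w,\Pi_w,\Pi_{\ol{w}}$ are all compressions of $\pi$ with $\Pi_{\ol{w}}\sim\Pi_w$. For necessity I would argue the contrapositive. If $\Phi_j=\rho_w$ for two distinct indices, then after reordering the summands (a unitary conjugation, which preserves $C^*$-extremity) I may write $\Phi\sim(\rho_w\oplus\rho_w)\oplus\Phi'$. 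A direct computation gives $\rho_w\oplus\rho_w=\tfrac12\Pi_w+\tfrac12\Pi_{\ol{w}}$ as maps into $M_2(\mbb{R})$, since both sides send $f$ to $(Re f(w))I_2$; and $\Pi_w\neq\Pi_{\ol{w}}$ because $w\neq\ol{w}$ forces $Im f(w)\neq0$ for some $f$ by Lemma~\ref{lem-points-separation-1}. Hence $\rho_w\oplus\rho_w$ is not a linear extreme point, so by Proposition~\ref{prop-Cstar-ext-Lin-extr} (its domain space being finite-dimensional) it is not $C^*$-extreme, and then Proposition~\ref{prop-CompExtremepoint} forces $\Phi$ to fail $C^*$-extremity.

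For sufficiency, assume at most one summand equals $\rho_w$. Conjugating each $\Pi_{\ol{w}}$ block by the unitary implementing $\Pi_{\ol{w}}\sim\Pi_w$, I may assume $\Phi=\Pi_w^{\oplus b}\oplus\rho_w^{\oplus a}$ with $a\in\{0,1\}$. If $a=0$ then $\Phi$ is a $\ast$-homomorphism and Corollary~\ref{cor-pure-and-homo} finishes it, so I focus on $a=1$. The construction I would use is the minimal Stinespring representation: all $b+1$ summands dilate the single copy $\pi$, yielding $(\mcl{K},\tilde\pi,V)$ with $\mcl{K}=(\mbb{R}^2)^{\oplus(b+1)}$, $\tilde\pi=\pi^{\oplus(b+1)}$ and $V=I_2^{\oplus b}\oplus V_0$, where $V_0=\SMatrix{1\\0}:\mbb{R}\to\mbb{R}^2$ (minimality being immediate from irreducibility of $\pi$ on each block). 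Since $\pi$ is complex type, identifying each $\mbb{R}^2$ with $\mbb{C}$ via the complex structure $J=\SMatrix{0&1\\-1&0}$ turns $\tilde\pi(f)$ into scalar multiplication by $f(w)$, so $\mcl{K}\cong\mbb{C}^{b+1}$ and $\tilde\pi(\mcl{A})'\cong M_{b+1}(\mbb{C})$. Under this identification $V\mcl{H}=\mbb{C}^b\oplus\mbb{R}$, where the last summand is the \emph{real} axis of the final coordinate and not a complex line, precisely because $V_0^*JV_0=0$.

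The heart of the argument would then be to verify Theorem~\ref{thm-Cext-abstract-char}(iii): given a positive $D\in M_{b+1}(\mbb{C})$ with $V^*DV$ invertible, produce $S\in M_{b+1}(\mbb{C})$ with $S(V\mcl{H})\subseteq V\mcl{H}$, $S|_{V\mcl{H}}$ invertible and $S^*S=D$. The demand that a complex-linear $S$ preserve the real subspace $\mbb{C}^b\oplus\mbb{R}$ forces the block form $S=\SMatrix{A&p\\0&d}$ with $A\in M_b(\mbb{C})$, $p\in\mbb{C}^b$ and, crucially, $d\in\mbb{R}$. Writing $D=\SMatrix{D_{11}&D_{12}\\D_{12}^*&D_{22}}$ accordingly, the equation $S^*S=D$ reads $A^*A=D_{11}$, $A^*p=D_{12}$, $p^*p+d^2=D_{22}$. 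Restricting the positive form $V^*DV$ to the complex part shows $D_{11}>0$, so I would set $A=D_{11}^{1/2}$, $p=D_{11}^{-1/2}D_{12}$, and $d^2=D_{22}-D_{12}^*D_{11}^{-1}D_{12}$, the Schur complement of $D_{11}$ in $D$. Positivity of $D$ gives $d^2\geq0$, and $d^2=0$ would yield a nonzero vector $(-D_{11}^{-1}D_{12},1)$ lying in $V\mcl{H}$ and annihilated by $D$, contradicting invertibility of $V^*DV$; hence $d>0$ and $S|_{V\mcl{H}}$ is invertible, completing the verification.

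The main obstacle, and the reason the number of $\rho_w$ summands is exactly what governs $C^*$-extremity, is the position of $V\mcl{H}$ inside the complex space $\mcl{K}\cong\mbb{C}^{b+1}$: a single $\rho_w$ contributes one real (non-complex) coordinate direction, which still admits the triangular factorisation $S=\SMatrix{A&p\\0&d}$ above, whereas a second copy of $\rho_w$ turns the pair into a genuine inflation splitting as $\tfrac12\Pi_w+\tfrac12\Pi_{\ol{w}}$. The two delicate points to pin down are that complex-linearity of $S$ genuinely forces the bottom-left block to vanish and $d$ to be real, and that invertibility of $V^*DV$ upgrades the Schur complement from nonnegative to strictly positive.
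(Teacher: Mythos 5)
Your proof is correct, and both directions rest on the same two ingredients as the paper's: the identity $\rho_w\oplus\rho_w=\tfrac12\Pi_w+\tfrac12\Pi_{\ol{w}}$ for necessity, and the abstract criterion of Theorem~\ref{thm-Cext-abstract-char} combined with a Schur-complement factorisation for sufficiency. The routing differs in two places, though. For necessity the paper decomposes the whole map as $\tfrac12(\Psi\oplus\Pi_w)+\tfrac12(\Psi\oplus\Pi_{\ol{w}})$ and must then argue that $\Psi\oplus\rho_w\oplus\rho_w\nsim\Psi\oplus\Pi_w$, a non-equivalence of direct sums that takes a small extra argument; you instead localise the failure to the two-dimensional block $\rho_w\oplus\rho_w$, which is not even a linear extreme point, and propagate it with Propositions~\ref{prop-Cstar-ext-Lin-extr} and~\ref{prop-CompExtremepoint} — a cleaner and slightly more robust organisation. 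For sufficiency the paper verifies condition (iv) of Theorem~\ref{thm-Cext-abstract-char} with real $2\times 2$-block matrices and checks $X^*\Phi(\cdot)X=\Psi(\cdot)$ by direct computation, whereas you verify the equivalent condition (iii) after identifying $\tilde\pi(\mcl{A})'$ with $M_{b+1}(\mbb{C})$, so that $V\mcl{H}$ becomes $\mbb{C}^b\oplus\mbb{R}$ and the required $S$ is an upper-triangular complex matrix whose bottom-right entry is real; your $d^2=D_{22}-D_{12}^*D_{11}^{-1}D_{12}$ is exactly the paper's $\beta^2=\alpha-V_0^*W^*Y^{-1}WV_0$. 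The complex-commutant picture makes transparent why a single real line inside $\mbb{C}^{b+1}$ is harmless while a second copy of $\rho_w$ is fatal, at the modest cost of having to note that real and complex adjoints agree under the identification $\mbb{R}^{2(b+1)}\cong\mbb{C}^{b+1}$; the paper's block computation is more pedestrian but entirely self-contained.
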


\begin{proof} 
    Let $\mcl{H}:=\bigoplus_j\mcl{H}_j$ and $n:=\dim(\mcl{H})\in\mbb{N}$.\\
    $(\Longrightarrow)$ If $n=1$, then there is only one $\Phi_j$, namely $\Phi_j=\rho_w$. If $n=2$, then  $\bigoplus_j\Phi_j= \rho_w\oplus\rho_w$ or $\Pi_w$ or $\Pi_{\ol{w}}$. Clearly, $\rho_w \oplus \rho_w$ is not a $C^*$-extreme point because we can write $\rho_w \oplus \rho_w = \frac{1}{2} \Pi_w + \frac{1}{2}\Pi_{\ol{w}}$. Now,  assume $n>2$. Suppose $\Phi_j = \rho_w$ for at least two indices $j$. Then $\Phi =\Psi \oplus \rho_w \oplus \rho_w$, where $\Psi$ is a direct sum of maps from the set $\{ \rho_w,\Pi_w,\Pi_{\ol{w}} \}$, i.e., of type-$w$. Let $\wtilde{\Phi}_1:= \Psi \oplus \Pi_w$ and $\wtilde{\Phi}_2: = \Psi \oplus \Pi_{\ol{w}}$. Then
    \begin{align*}
        \Phi=\Psi \oplus \rho_w \oplus \rho_w = \frac{1}{2} \wtilde{\Phi}_1 + \frac{1}{2} \wtilde{\Phi}_2.
    \end{align*}
    Since $\Pi_w \nsim \rho_w \oplus \rho_w$, we conclude that $\Phi \nsim \wtilde{\Phi_1}$, which contradicts the fact that $\Phi$ is a $C^*$-extreme point. Hence $\Phi_j=\rho_w$ for at most one $j$.\\
    $(\Longleftarrow)$ Assume that $\Phi_j \neq \rho_w$ for all $j$. Then $\Phi$ is a $\ast$-homomorphism and hence it is $C^*$-extreme. So we assume that $\Phi_j=\rho_w$ for exactly one $j$, and hence by Corollary \ref{cor-puremaps} we conclude that $n\in\mbb{N}$ is odd. Since $\Pi_w \sim \Pi_{\ol{w}}$, without loss of generality, we assume that 
    \begin{align*}
        \Phi\sim\underbrace{\Pi_w \oplus \Pi_w \oplus \cdots \oplus \Pi_w}_{\frac{n-1}{2}\text{-times}}\oplus\rho_w
             = (I_{\frac{n-1}{2}} \otimes \Pi_w) \oplus \rho_w 
             = \Matrix{I_{\frac{n-1}{2}} \otimes \Pi_w & 0_{n-1 \times 1} \\ 0_{1 \times n-1} & \rho_w}.
    \end{align*}
    Let  $V_0:=\sMatrix{1\\0}\in M_{2\times 1}(\mbb{R})$ and define
    \begin{align*}
        V:&=\Matrix{I_{n-1} & 0_{n-1 \times 1} \\ 0_{2 \times n-1} &  V_0}\in M_{(n+1)\times n}(\mbb{R})
        \qquad\text{and}\qquad \\
        \pi:&=\underbrace{\Pi_w \oplus\Pi_w \oplus \cdots \oplus \Pi_w}_{\frac{n+1}{2}\text{-times}} = I_{\frac{n+1}{2}} \otimes \Pi_w = \Matrix{I_{\frac{n-1}{2}} \otimes \Pi_w & 0_{n-1 \times 2} \\ 0_{2\times n-1} & \Pi_w}.
    \end{align*}
    Note that $\Phi(\cdot)\sim V^*\pi(\cdot)V$ so that $(\mcl{K},\pi,V)$ is the minimal Stinespring dilation of $\Phi$, where $\mcl{K}:=\bigoplus_1^{\frac{n+1}{2}}\mbb{R}^2$. Now, let $\Psi \leq_{cp} \Phi$ with $\Psi(1)$ invertible. Then, by Theorem \ref{thm-radon-nikodym}, there exists a positive contraction $D \in \pi(\mcl{A})'$ such that $\Psi(\cdot) = V^*D\pi(\cdot) V$. Write $D=\Matrix{Y&W\\W^*&Z}$ where $Y\in M_{n-1}(\mbb{R}), W\in M_{(n-1)\times 2}(\mbb{R}), Z\in M_2(\mbb{R})$. Since $D$ is a positive contraction, we must have $Y$ and $Z$ are positive contractions. Also, $D\in\pi(\mcl{A})'$ implies that
    \begin{equation}
         \left.\begin{array}{l}
      Y\in \left(I_{\frac{n-1}{2}} \otimes \Pi_w(\mcl{A})\right)'\\
      Z \in \Pi_w(\mcl{A})'\\
      (I_{\frac{n-1}{2}} \otimes \Pi_w(\cdot)) W = W \Pi_w(\cdot)
    \end{array}\right\} \tag{$\ast$}
    \end{equation}
    Since $w \neq \ol{w}$, by Lemma \ref{lem-points-separation-1}, we get 
    $$\Pi_w(\mcl{A}) = \left\{\sMatrix{s&t\\-t&s} : s,t \in \mbb{R}\right\}.$$ 
    Then $\Pi_w(\mcl{A})' = \left\{\sMatrix{s&t\\-t&s} : s,t \in \mbb{R}\right\}$ and we have  $Z= \alpha I_2$ for some $ \alpha \in [0,1]$. By direct computation, we can see that \begin{align*}
     \Psi(\cdot) = V^*D\pi(\cdot)V = \Matrix{Y(I_{\frac{n-1}{2}} \otimes \Pi_w(\cdot)) & W\Pi_w(\cdot)V_0 \\ V_0^*W^*(I_{\frac{n-1}{2}} \otimes \Pi_w(\cdot)) & \alpha \rho_w(\cdot)}.
     \end{align*}
     Now, $V^*DV = \Matrix{Y & WV_0\\V_0^*W^*&\alpha}$ is positive and invertible implies that $Y$ is invertible and the matrix \begin{align*}
        \Matrix{I & -Y^{-1}WV_0 \\ 0 & 1}^* \Matrix{Y & WV_0 \\ V_0^*W^* & \alpha} \Matrix{I & -Y^{-1}WV_0 \\ 0 & 1} 
          = \Matrix{Y & 0 \\ 0 & \alpha - V_0^*W^*Y^{-1}WV_0}
    \end{align*}
    is positive and invertible. 
    In particular, $\beta:= \sqrt{\alpha - V_0^*W^*Y^{-1}WV_0}>0$. Note that $X:=\Matrix{Y^{\frac{1}{2}}&Y^{-\frac{1}{2}}WV_0\\0&\beta}$ is invertible and 
    \begin{align*}
        X^*\Phi(\cdot)X
        &= \Matrix{Y^{\frac{1}{2}}(I_{\frac{n-1}{2}} \otimes \Pi_w(\cdot))Y^{\frac{1}{2}} & Y^{\frac{1}{2}}(I_{\frac{n-1}{2}} \otimes \Pi_w(\cdot))Y^{-\frac{1}{2}}WV_0 \\ V_0^*W^*Y^{-\frac{1}{2}}(I_{\frac{n-1}{2}} \otimes \Pi_w(\cdot))Y^{\frac{1}{2}} & R^*(I_{\frac{n-1}{2}} \otimes \Pi_w(\cdot))R + \beta^2 \rho_w(\cdot)},
    \end{align*}
    where 
    $$R:=Y^{-\frac{1}{2}}WV_0\overset{(say)}{=}\Matrix{r_1&r_2&\cdots&r_{n-1}}^*\in M_{(n-1)\times 1}(\mbb{R}).$$
    We observe that 
    \begin{align*}
        R^*(I_{\frac{n-1}{2}} \otimes \Pi_w(f))R
        &=(r_1^2+\cdots+r_{n-1}^2)~Re f(w) \\
        &=R^*R \rho_w(f) \\
        &= (\alpha-\beta^2)\rho_w(f),\qquad\forall~f\in\mcl{A}.
    \end{align*}
    Thus, from $(\ast)$, we get 
    \begin{align*}
    X^*\Phi(\cdot)X
        = \Matrix{Y(I_{\frac{n-1}{2}} \otimes \Pi_w(\cdot)) & W\Pi_w(\cdot)V_0 \\ V_0^*W^*(I_{\frac{n-1}{2}} \otimes \Pi_w(\cdot)) & \alpha \rho_w(\cdot)} 
        = \Psi(\cdot).
    \end{align*}
    Therefore, by Theorem \ref{thm-Cext-abstract-char}, we conclude that $\Phi$ is a $C^*$-extreme point. 
\end{proof}


\begin{notation}
    Given $n\in\mbb{N}$ and $w\neq \ol{w}$ in $\Omega$, we let $\mcl{E}_w^n$ denote the set of all $C^*$-extreme points of $\mathrm{UCP}(\mcl{A},M_n(\mbb{R}))$ of type-$w$ as given in Proposition \ref{prop-rho-pi-Cextt}.
\end{notation}

\begin{lemma} \label{lem-disjoint}
    Let $w_1,w_2\in\Omega$ be such that $w_1 \notin \{w_2,\ol{w_2}\}$. If $\Phi_j$ is a UCP map of type-$w_j$ for $j=1,2$, then $\Phi_1$ and $\Phi_2$ are disjoint.
\end{lemma}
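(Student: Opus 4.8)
The plan is to unwind the definition of disjointness for UCP maps down to the disjointness of the two underlying irreducible representations, and then to separate $w_1$ from $w_2$ by a single function. Throughout, write $\sigma_w$ for the irreducible representation of $\mcl{A}$ attached to a point $w\in\Omega$: that is, $\sigma_w=\rho_w:\mcl{A}\to\mbb{R}$ when $w=\ol{w}$, and $\sigma_w=\Pi_w:\mcl{A}\to M_2(\mbb{R})$ when $w\neq\ol{w}$. By definition $\Phi_1$ and $\Phi_2$ are disjoint precisely when the representations appearing in their minimal Stinespring representations are disjoint, so by Proposition \ref{prop-disjoint-iff} it suffices to show that these representations have trivial intertwining space.

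First I would identify the minimal Stinespring representation of a type-$w$ map. Each pure summand of such a map lies in $\{\rho_w,\Pi_w,\Pi_{\ol{w}}\}$ (or equals $\rho_w$ when $w=\ol{w}$), and by Corollary \ref{cor-puremaps} — together with $\Pi_{\ol{w}}\sim\Pi_w$ — the representation in the minimal Stinespring representation of each such summand is unitarily equivalent to $\sigma_w$. Assembling the minimal triples of the summands into a direct sum yields a Stinespring representation of the whole map, and a short computation shows this direct sum is already minimal (setting all but one coordinate of a vector to zero recovers each summand's minimal space inside $\cspan$). Hence the representation in the minimal Stinespring representation of $\Phi_j$ is unitarily equivalent to a finite multiple $\bigoplus\sigma_{w_j}$ of $\sigma_{w_j}$.

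Next I would prove that $\sigma_{w_1}$ and $\sigma_{w_2}$ are disjoint. By Proposition \ref{prop-disjoint-iff} this amounts to showing $\mcl{I}(\sigma_{w_1},\sigma_{w_2})=\{0\}$. The hypothesis $w_1\notin\{w_2,\ol{w_2}\}$ is symmetric in $w_1,w_2$ (apply the involution), so the relevant part of Lemma \ref{lem-points-separation-2} — part (iii), (ii), or (i) according to how many of $w_1,w_2$ are fixed by the involution — supplies $f\in\mcl{A}$ with $f(w_1)=1$ and $f(w_2)=0$. For this $f$ the defining formulas give $\sigma_{w_1}(f)=I$ and $\sigma_{w_2}(f)=0$, so any $T\in\mcl{I}(\sigma_{w_1},\sigma_{w_2})$ obeys $T=T\sigma_{w_1}(f)=\sigma_{w_2}(f)T=0$. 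Finally, since $\sigma_{w_1}$ and $\sigma_{w_2}$ are disjoint, the remark following Proposition \ref{prop-disjoint-iff} shows that any finite multiple of $\sigma_{w_1}$ is disjoint from any finite multiple of $\sigma_{w_2}$; combined with Step 1 this gives that the representations of $\Phi_1$ and $\Phi_2$ are disjoint, i.e.\ $\Phi_1$ and $\Phi_2$ are disjoint.

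I expect the only delicate point to be the minimality claim in Step 1; once it is granted, the disjointness of $\sigma_{w_1}$ and $\sigma_{w_2}$ is a one-line consequence of Lemma \ref{lem-points-separation-2}, and the symmetry of the hypothesis under the involution is exactly what lets a single separating function $f$ handle the real, complex, and mixed cases uniformly.
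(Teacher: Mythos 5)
Your proposal is correct and follows essentially the same route as the paper: reduce, via the structure of minimal Stinespring representations of type-$w$ maps and the fact that multiples of disjoint representations are disjoint, to showing $\mcl{I}(\sigma_{w_1},\sigma_{w_2})=\{0\}$, and then invoke Lemma \ref{lem-points-separation-2}. The only (harmless) difference is that you use a single separating function with $f(w_1)=1$, $f(w_2)=0$ to kill any intertwiner uniformly, whereas the paper runs a short three-case matrix computation using the full independence of values; both are valid.
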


\begin{proof} 
    Let $(\mcl{K}_j,\pi_j,V_j)$ be the minimal Stinespring representation of $\Phi_j:\mcl{A}\to M_{n_j}(\mbb{R})$ for $j=1,2$. To show that $\mcl{I}(\pi_1,\pi_2)=\{0\}$. However, as seen in the above proof, note that each $\pi_j$ is unitarily equivalent to either $I_{n_j}\otimes\rho_{w_j}$ or $I_{k_j}\otimes\Pi_{w_j}$ depending on whether $w_j=\ol{w_j}$ or $w_j\neq\ol{w_j}$, for some $k_j\leq n_j$. Therefore, it suffices to prove this result in the case when $\pi_j\in\{\rho_{w_j},\Pi_{w_j}\}$ for $j=1,2$.\\
    \ul{Case (1):} Suppose $w_j\neq \ol{w_j}$ for $j=1,2$. In this case $\pi_j=\Pi_{w_j}$ for $j=1,2$. Then
    \begin{align*}
        T\in\mcl{I}(\pi_1,\pi_2)
            &\Longrightarrow T\Pi_{w_1}(f) = \Pi_{w_2}(f)T,\qquad\forall~f\in C(\Omega,-)\\
            &\Longrightarrow T \Matrix{\alpha&\beta\\-\beta&\alpha} = \Matrix{\gamma&\delta\\-\delta&\gamma}T,\qquad\forall~\alpha,\beta,\gamma,\delta\in\mbb{R}\qquad(\mbox{by Lemma }\ref{lem-points-separation-2})\\
            &\Longrightarrow T=0\in M_2(\mbb{R}).
    \end{align*}
   \ul{Case (2):} Suppose $w_1=\ol{w_1}$ and $w_2\neq\ol{w_2}$. In this case $\pi_1=\rho_{w_1}$ and $\pi_2=\Pi_{w_2}$. If $T\in\mcl{I}(\pi_1,\pi_2)$, then by Lemma \ref{lem-points-separation-2}, we have $T\alpha = \Matrix{\beta&\gamma\\-\gamma&\beta}T$ for all $\alpha,\beta,\gamma\in\mbb{R}$. This implies $T=0\in M_{2\times 1}(\mbb{R})$. \\
    \ul{Case (3):}  Suppose $w_j= \ol{w_j}$ for $j=1,2$. In this case $\pi_j=\rho_{w_j}$ for $j=1,2$. Then by Lemma \ref{lem-points-separation-2}, we conclude that $\mcl{I}(\pi_1,\pi_2)=\{0\}\subseteq M_{1\times 1}(\mbb{R})$.
\end{proof}

\begin{theorem}\label{thm-commutative-Cext-structure-MAIN}
    Let $n\in\mbb{N}$ and $\Phi \in \mathrm{UCP}(\mcl{A},M_n(\mbb{R}))$. Then $\Phi \in \mathrm{UCP}_{C^*-ext}(\mcl{A},M_n(\mbb{R}))$ if and only if there exist $p,q \in \mbb{N} \cup \{0\}$, and a subset $\{\xi_j\}_{j=1}^p\cup\{\eta_j\}_{j=1}^q\subseteq\Omega$ of distinct points with $\xi_j=\ol{\xi_j}$ for all $j\leq p$, and $\eta_i\notin\{\ol{\eta_i},\ol{\eta_j}\}$ for all $i, j\leq q$ with $i\neq j$ such that
    \begin{align}\label{eq-directsum-maintheorem} 
        \Phi \sim \left(\bigoplus_{j=1}^p \Phi_{\xi_j}\right) \bigoplus \left(\bigoplus_{j=1}^q \Phi_{\eta_j}\right),
    \end{align} 
    where $\Phi_{\xi_j}=\bigoplus_1^{n_j}\rho_{\xi_j}$ for all $j\leq p$,  $\Phi_{\eta_j}\in\mcl{E}_{\eta_j}^{m_j}$ for all $j\leq q$, and $n_j,m_j\in\mbb{N}$ are such that $\sum_{j=1}^pn_j+\sum_{j=1}^qm_j=n$.
\end{theorem}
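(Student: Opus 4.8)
The plan is to prove both implications by reducing $\Phi$ to its pure constituents via Theorem \ref{thm-directsum-pure} and then organizing those constituents into mutually disjoint \emph{type-$w$} blocks, so that the disjointness machinery of Lemma \ref{lem-disjoint} and Proposition \ref{prop-disjoint-$C^*$-ext} transfers $C^*$-extremality between $\Phi$ and each block, while Proposition \ref{prop-rho-pi-Cextt} governs what each block may look like. The whole argument is driven by the classification of pure maps in Corollary \ref{cor-puremaps} (equivalently Proposition \ref{prop-irr-repre}) together with the basic identity that, for $f\in C(\Omega,-)$, one has $f(\ol{w})=\ol{f(w)}$, whence $\rho_w=\rho_{\ol{w}}$ and $\Pi_w\sim\Pi_{\ol{w}}$.

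For necessity, I would apply the finite-dimensional decomposition of Remark \ref{note-compression} to the $C^*$-extreme point $\Phi$, writing $\Phi\sim\bigoplus_{i=1}^k\big(\bigoplus_{j=1}^{n_i}\Phi_j^{\pi_i}\big)$ with the $\pi_i$ pairwise non-equivalent irreducible representations, the $\Phi_j^{\pi_i}$ compressions of $\pi_i$, and the blocks $\bigoplus_j\Phi_j^{\pi_i}$ mutually disjoint. By Proposition \ref{prop-irr-repre} each $\pi_i$ is either one-dimensional (and equals $\rho_{\xi}$ for a real point $\xi=\ol{\xi}$) or two-dimensional (and equals $\Pi_{\eta}$ for a complex point $\eta\neq\ol{\eta}$, with $\Pi_\eta\sim\Pi_{\ol\eta}$). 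Since the only nonzero compressions of $\rho_\xi$ are copies of $\rho_\xi$, and by Corollary \ref{cor-puremaps} the only compressions of $\Pi_\eta$ are $\Pi_\eta$ itself and the one-dimensional map $\rho_\eta$, each block is (up to unitary equivalence) a type-$w$ map for $w=\xi_i$ or $w=\eta_i$. Proposition \ref{prop-disjoint-$C^*$-ext} (forward direction) then forces every block to be $C^*$-extreme: a real block is $\bigoplus^{n_i}\rho_{\xi_i}=(f\mapsto f(\xi_i)I)$, automatically extreme with no constraint, giving the summands $\Phi_{\xi_j}$; a complex block is a $C^*$-extreme type-$\eta_i$ map, so Proposition \ref{prop-rho-pi-Cextt} shows $\rho_{\eta_i}$ occurs at most once, i.e. it lies in $\mcl{E}_{\eta_i}^{m_i}$. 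Finally, the pairwise non-equivalence of the $\pi_i$ translates precisely into distinctness of the chosen points together with $\xi_j=\ol{\xi_j}$ and $\eta_i\notin\{\ol{\eta_i},\ol{\eta_j}\}$ for $i\neq j$, and the dimension bookkeeping gives $\sum n_j+\sum m_j=n$.

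For sufficiency, I would run the argument backwards. Given the prescribed decomposition, each $\Phi_{\xi_j}=\bigoplus^{n_j}\rho_{\xi_j}$ is a $\ast$-homomorphism, since $\xi_j=\ol{\xi_j}$ forces $\rho_{\xi_j}(f)=f(\xi_j)\in\mbb{R}$ by Lemma \ref{lem-points-separation-1}, hence it is $C^*$-extreme by Corollary \ref{cor-pure-and-homo}; each $\Phi_{\eta_j}\in\mcl{E}_{\eta_j}^{m_j}$ is $C^*$-extreme by Proposition \ref{prop-rho-pi-Cextt}. The distinctness of the points together with the conditions $\xi_j=\ol{\xi_j}$ and $\eta_i\notin\{\ol{\eta_i},\ol{\eta_j}\}$ yields that any two distinct points $w_1,w_2$ among $\{\xi_j\}\cup\{\eta_j\}$ satisfy $w_1\notin\{w_2,\ol{w_2}\}$, so Lemma \ref{lem-disjoint} makes the blocks mutually disjoint. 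Proposition \ref{prop-disjoint-$C^*$-ext} (converse direction) then shows the full direct sum is $C^*$-extreme, and invariance of $C^*$-extremality under unitary equivalence gives the result for $\Phi$.

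The main obstacle is the necessity direction's bookkeeping of the conjugation $w\mapsto\ol{w}$: one must use $\rho_w=\rho_{\ol{w}}$ and $\Pi_w\sim\Pi_{\ol{w}}$ to amalgamate all summands attached to a common complex point $\eta$ into a single type-$\eta$ block and choose one representative per conjugate pair, then verify that the representatives extracted from pairwise non-equivalent irreducible representations satisfy exactly the non-conjugacy conditions $\eta_i\notin\{\ol{\eta_i},\ol{\eta_j}\}$ recorded in the statement. Once the block structure and these conditions are pinned down, every remaining step is a direct citation of the already-established results.
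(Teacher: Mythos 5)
Your proposal is correct and follows essentially the same route as the paper: decompose the $C^*$-extreme point into pure summands via Theorem \ref{thm-directsum-pure}, classify those summands by Corollary \ref{cor-puremaps} (equivalently Proposition \ref{prop-irr-repre}), amalgamate them into mutually disjoint type-$w$ blocks using $\Pi_w\sim\Pi_{\ol w}$ and Lemma \ref{lem-disjoint}, transfer $C^*$-extremality block-by-block with Proposition \ref{prop-disjoint-$C^*$-ext}, and constrain each complex block with Proposition \ref{prop-rho-pi-Cextt}; the sufficiency direction likewise matches the paper's appeal to disjointness. The only cosmetic difference is that you route the decomposition through Remark \ref{note-compression} rather than citing Theorem \ref{thm-directsum-pure} directly, which changes nothing of substance.
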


\begin{proof}
    Let $\Phi \in \mathrm{UCP}_{C^*-ext}(\mcl{A},M_n(\mbb{R}))$. Then, by Theorem \ref{thm-directsum-pure}, $\Phi$ is unitary equivalent to a direct sum of pure UCP maps. From Corollary \ref{cor-puremaps}, we know that pure UCP maps on $\mcl{A}$ are of the form $\rho_w$ for some $w\in\Omega$, or $\Pi_w$ for some $w\neq\ol{w}\in\Omega$. Recall that $\Pi_w\sim\Pi_{\ol{w}}$ for all $w\neq\ol{w}$. Hence there exist $p,q\in\mbb{N}\cup\{0\}$ and a subset of distinct points $\{\xi_j\}_{j=1}^p\cup\{\eta_j\}_{j=1}^q\subseteq\Omega$ with $\xi_i=\ol{\xi_i}$ for all $i\leq p$ and $\eta_j\notin\{\ol{\eta_j},\ol{\eta_k}\}$ for all $j, k\leq q$ with $j\neq k$, such that
    \begin{align*}
        \Phi\sim (\bigoplus_{j=1}^p\Phi_{\xi_j})\bigoplus(\bigoplus_{j=1}^q\Phi_{\eta_j})
    \end{align*}
    where $\Phi_{\xi_j}=\bigoplus_1^{n_j}\rho_{\xi_j}$ is a type-$\xi_j$ map, and 
    \begin{align*}
        \Phi_{\eta_j}=\underbrace{\rho_{\eta_j}\oplus\rho_{\eta_j}\oplus\cdots \rho_{\eta_j}}_{(k_j)-times}\oplus\underbrace{\Pi_{\eta_j}\oplus\Pi_{\eta_j}\oplus\cdots \Pi_{\eta_j}}_{(l_j)-times}
    \end{align*}
    is a type-$\eta_j$ map. By construction and by Lemma \ref{lem-disjoint} we observe that $\{\Phi_{\xi_j}\}_{j=1}^p \cup \{\Phi_{\eta_j}\}_{j=1}^q$ are mutually disjoint UCP maps. Since $\Phi\in\mathrm{UCP}_{C^*-ext}(\mcl{A},M_{n}(\mbb{R}))$, by Proposition \ref{prop-disjoint-$C^*$-ext}, we have $\Phi_{\eta_j} \in \mathrm{UCP}_{C^*-ext}(\mcl{A},M_{k_j+2l_j}(\mbb{R}))$.  Then, by Lemma \ref{prop-rho-pi-Cextt}, we have $k_j\in\{0,1\}$ and $\Phi_{\eta_j} \in\mcl{E}_{\eta_j}^{m_j}$, where $m_j=k_j+2l_j$ for all $1\leq j \leq q$. 
    Conversely, assume that $\Phi$ has the form \eqref{eq-directsum-maintheorem}. Since each direct summand is a $C^*$-extreme point and are mutually disjoint, by Proposition \ref{prop-disjoint-$C^*$-ext}, we have $\Phi \in \mathrm{UCP}_{C^*-ext}(\mcl{A},M_{n}(\mbb{R}))$.
\end{proof}

\begin{corollary}
    If $\mcl{A}=C(\Omega,\mbb{R})$, then the $C^*$-extreme points of $\mathrm{UCP}(\mcl{A},M_n(\mbb{R}))$ are precisely the $\ast$-homomorphisms.
\end{corollary}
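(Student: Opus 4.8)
The plan is to read off the conclusion from the main characterization Theorem~\ref{thm-commutative-Cext-structure-MAIN} by specializing to the trivial involution. First I would observe that $\mcl{A}=C(\Omega,\mbb{R})$ corresponds precisely to the case where the period-$2$ homeomorphism is the identity, i.e.\ $-=id_\Omega$. Consequently every point of $\Omega$ is fixed, so $w=\ol{w}$ for all $w\in\Omega$, and by Lemma~\ref{lem-points-separation-1} every $f\in\mcl{A}$ takes values in $\mbb{R}$. In particular, there are no points $w$ with $w\neq\ol{w}$, so the complex-type pure maps $\Pi_w$ of Corollary~\ref{cor-puremaps} simply do not arise for this algebra.

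For the forward direction, let $\Phi\in\mathrm{UCP}_{C^*-ext}(\mcl{A},M_n(\mbb{R}))$. By Theorem~\ref{thm-commutative-Cext-structure-MAIN} we may write $\Phi\sim\big(\bigoplus_{j=1}^p\Phi_{\xi_j}\big)\oplus\big(\bigoplus_{j=1}^q\Phi_{\eta_j}\big)$ with $\xi_j=\ol{\xi_j}$ and $\eta_j\neq\ol{\eta_j}$. Since $\Omega$ has no points with $\eta\neq\ol{\eta}$, the index set for the $\eta$'s must be empty, i.e.\ $q=0$, and hence $\Phi\sim\bigoplus_{j=1}^p\Phi_{\xi_j}$ where each $\Phi_{\xi_j}=\bigoplus_1^{n_j}\rho_{\xi_j}$. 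Because $\xi_j=\ol{\xi_j}$, Lemma~\ref{lem-points-separation-1} gives $\rho_{\xi_j}(f)=\mathrm{Re}\,f(\xi_j)=f(\xi_j)$, which is evaluation at $\xi_j$ and therefore multiplicative. A finite direct sum of $\ast$-homomorphisms is again a $\ast$-homomorphism, so $\bigoplus_{j=1}^p\Phi_{\xi_j}$ is multiplicative; and since conjugating by a unitary preserves the $\ast$-homomorphism property, $\Phi$ itself is a $\ast$-homomorphism.

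The converse is immediate from Corollary~\ref{cor-pure-and-homo}, which guarantees that every $\ast$-homomorphism in $\mathrm{UCP}(\mcl{A},M_n(\mbb{R}))$ is a $C^*$-extreme point. There is no real obstacle in this argument, as all the substantive work is carried out in Theorem~\ref{thm-commutative-Cext-structure-MAIN}; the only point requiring (routine) verification is that a direct sum of evaluation maps, together with its unitary conjugates, remains multiplicative.
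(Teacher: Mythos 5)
Your proof is correct and is exactly the argument the paper intends: the corollary is stated without proof as an immediate consequence of Theorem~\ref{thm-commutative-Cext-structure-MAIN}, and your specialization to $-=id_\Omega$ (forcing $q=0$, so that only the multiplicative evaluation maps $\rho_{\xi_j}$ survive) together with the converse via Corollary~\ref{cor-pure-and-homo} is precisely that deduction, carefully spelled out.
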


    In \cite[Proposition 27]{LoPa81}, the authors considered the $C^*$-convex set $\{S\in\B{\mcl{H}}: S^*=S, \norm{S}\leq 1\}$, where $\mcl{H}$ is a complex Hilbert space. They showed that the $C^*$-extreme points (see \cite{LoPa81} for definition) of this set take the form $2P-I_{\mcl{H}}$, where $P\in\B{\mcl{H}}$ is an orthogonal projection. We observe that their proof works for real Hilbert spaces as well. In particular, if $\mcl{H}=\mbb{R}^n$, then the $C^*$-extreme points correspond to matrices that are unitarily equivalent to the diagonal matrix $\sMatrix{I_k&0\\0&-I_m}\in M_n(\mbb{R})$, for some $k,m\in\mbb{N}\cup\{0\}$ such that $k+m=n$. Here, we consider the set $\mcl{CS}_n(\mbb{R})$ of all contractive skew-symmetric $n\times n$ real matrices, i.e.,
    \begin{align*}
        \mcl{CS}_n(\mbb{R}):= \{ S \in M_n(\mbb{R}): S^* = -S, \norm{S}\leq 1\},
    \end{align*}
    which is a real $C^*$-convex subset of $M_n(\mbb{R})$. We investigate the $C^*$-extreme points of the above set by considering UCP maps from $\mbb{C}$ to $M_n(\mbb{R})$.
    
\begin{remark}
    Given a skew-symmetric matrix $S \in M_n(\mbb{R})$ define  $\Phi_S : \mbb{C} \to M_n(\mbb{R})$ by
    \begin{align*}
        \Phi_S(\lambda) := Re(\lambda)I_n + Im(\lambda)S,\qquad\forall~\lambda\in\mbb{C}.
    \end{align*}
    Note that $\Phi_S$ is a unital self-adjoint positive map; in fact, these are the only unital self-adjoint positive maps from $\mbb{C}$ to $M_n(\mbb{R})$.  Now, consider the complexification $(\Phi_S)_c : \mbb{C}^2 \to M_n(\mbb{C})$ of $\Phi_S$ given by
    \begin{align*}
        (\Phi_S)_c(\lambda,\mu) =\Phi_S\left(\frac{\lambda+\ol{\mu}}{2}\right) + i \Phi_S\left(\frac{\lambda-\ol{\mu}}{2i}\right) =\frac{\lambda}{2} (I_n - iS) + \frac{\mu}{2} (I_n+iS),\qquad\forall~\lambda,\mu\in\mbb{C}.
    \end{align*}
    Since $\mbb{C}^2$ is a commutative complex $C^*$-algebra we have 
       \begin{align*}
            \Phi_S \text{ is CP} 
                \Longleftrightarrow (\Phi_S)_c \text{ is CP}
                \Longleftrightarrow (\Phi_S)_c \text{ is positive} 
                \Longleftrightarrow I_n \pm iS \geq 0 
                \Longleftrightarrow \msf{r}(S)=\norm{S}\leq 1, 
    \end{align*} 
    where $\msf{r}(S)$ is the spectral radius of $S$. Thus, $\Phi_S \in \mathrm{UCP}(\mbb{C}, M_n(\mbb{R}))$ if and only if $\norm{S}\leq 1$. (This also shows that, unlike in the complex case, positive maps on real commutative $C^*$-algebras are not necessarily CP maps.)
\end{remark}    

    
\begin{proposition}
    Let $S\in \mcl{CS}_n(\mbb{R})$. Then the following hold: 
    \begin{enumerate}[label=(\roman*)]
        \item $S \in C^*\text{-}ext(\mcl{CS}_n(\mbb{R}))$ if and only if $\Phi_S \in \mathrm{UCP}_{C^*-ext}(\mbb{C}, M_n(\mbb{R}))$;
        \item $S \in ext(\mcl{CS}_n(\mbb{R}))$ if and only if $\Phi_S \in \mathrm{UCP}_{ext}(\mbb{C}, M_n(\mbb{R}))$.
    \end{enumerate}
    Moreover, in both cases, $\norm{S}=1$. 
\end{proposition}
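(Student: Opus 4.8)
The plan is to promote the bijection $S\mapsto\Phi_S$ between $\mcl{CS}_n(\mbb{R})$ and $\mathrm{UCP}(\mbb{C},M_n(\mbb{R}))$, recorded in the preceding Remark, to a correspondence that respects both $C^*$-convex combinations and unitary conjugation. Since the Remark shows that every UCP map $\mbb{C}\to M_n(\mbb{R})$ is $\Phi_S$ for a unique $S\in\mcl{CS}_n(\mbb{R})$, the assignment $S\mapsto\Phi_S$ is a bijection, and it suffices to check that it carries the two extremal structures to one another.

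First I would record two elementary identities. Given $T_1,\dots,T_m\in M_n(\mbb{R})$ with $\sum_j T_j^*T_j=I_n$ and $S_1,\dots,S_m\in\mcl{CS}_n(\mbb{R})$, the formula $\Phi_{S_j}(\lambda)=Re(\lambda)I_n+Im(\lambda)S_j$ gives
\[
    \sum_j T_j^*\Phi_{S_j}(\lambda)T_j = Re(\lambda)\Big(\sum_j T_j^*T_j\Big)+Im(\lambda)\Big(\sum_j T_j^*S_jT_j\Big)=\Phi_S(\lambda),
\]
where $S:=\sum_j T_j^*S_jT_j$ is skew-symmetric and, being attached to a $C^*$-convex combination of UCP maps, lies in $\mcl{CS}_n(\mbb{R})$. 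Hence $\Phi_S=\sum_j\mathrm{Ad}_{T_j}\circ\Phi_{S_j}$ if and only if $S=\sum_j T_j^*S_jT_j$ (Claim A). Likewise, comparing the coefficients of $Re(\lambda)$ and $Im(\lambda)$ shows, for orthogonal $U\in M_n(\mbb{R})$, that $\Phi_{S'}=\mathrm{Ad}_U\circ\Phi_S$ if and only if $S'=U^*SU$ (Claim B). The scalar case $\sum_j t_j\Phi_{S_j}=\Phi_{\sum_j t_jS_j}$ is the specialization $T_j=\sqrt{t_j}\,I_n$.

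Claims A and B make (i) and (ii) formal. For (i), if $S\in C^*\text{-}ext(\mcl{CS}_n(\mbb{R}))$ and $\Phi_S=\sum_j\mathrm{Ad}_{T_j}\circ\Psi_j$ is a proper $C^*$-convex decomposition, I would write each $\Psi_j=\Phi_{S_j}$ via the bijection; Claim A turns this into a proper $C^*$-convex decomposition $S=\sum_j T_j^*S_jT_j$ in $\mcl{CS}_n(\mbb{R})$, so $C^*$-extremality of $S$ supplies orthogonal $U_j$ with $S_j=U_j^*SU_j$, and Claim B returns $\Psi_j=\mathrm{Ad}_{U_j}\circ\Phi_S$; thus $\Phi_S$ is $C^*$-extreme. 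The converse reverses these implications, and (ii) is verbatim the same argument with ordinary convex combinations, using the injectivity of $S\mapsto\Phi_S$ to pass between $\Phi_{S_j}=\Phi_S$ and $S_j=S$.

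For the final assertion I would reduce both cases to showing that a linear extreme point $S$ of $\mcl{CS}_n(\mbb{R})$, with $n\geq 2$, satisfies $\norm{S}=1$: indeed, since $M_n(\mbb{R})$ is finite-dimensional, Proposition \ref{prop-Cstar-ext-Lin-extr} applied to $\Phi_S$ together with (i) and (ii) shows every $C^*$-extreme point is also a linear extreme point. If $\norm{S}<1$ and $S\neq 0$, then for small $t>0$ the decomposition $S=\tfrac12(1+t)S+\tfrac12(1-t)S$ exhibits $S$ as a proper convex combination of two distinct elements of $\mcl{CS}_n(\mbb{R})$; if $S=0$ one instead uses $0=\tfrac12 A+\tfrac12(-A)$ for any nonzero contractive skew-symmetric $A$, which exists because $n\geq2$. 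Either way $S$ fails to be linear extreme, a contradiction. The computations are routine; the only points demanding care are matching the two notions of unitary (orthogonal matrices of the real $C^*$-algebra $M_n(\mbb{R})$ versus unitaries of $\B{\mbb{R}^n}$), so that Claim B genuinely aligns the two definitions of extremality, and noting that the degenerate case $n=1$, where $\mcl{CS}_1(\mbb{R})=\{0\}$, must be excluded from the norm statement.
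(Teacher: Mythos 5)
Your proposal is correct and follows essentially the same route as the paper: both translate proper $C^*$-convex (resp.\ convex) decompositions back and forth through the bijection $S\mapsto\Phi_S$ by matching the coefficients of $Re(\lambda)$ and $Im(\lambda)$, and both dispose of the norm claim by exhibiting $S$ with $\norm{S}<1$ as a proper convex combination of distinct contractions (the paper writes $S=\tfrac12(\tfrac{s}{\norm{S}}S)+\tfrac12(\tfrac{t}{\norm{S}}S)$ directly, without routing through Proposition \ref{prop-Cstar-ext-Lin-extr}). Your explicit exclusion of the degenerate case $n=1$, where $\mcl{CS}_1(\mbb{R})=\{0\}$ and the norm assertion fails vacuously, is a small point the paper glosses over.
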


\begin{proof}
    We will prove $(i)$, and $(ii)$ can be shown in a similar manner.  Assume that $ \Phi_S \in \mathrm{UCP}_{C^*-ext}(\mbb{C}, M_n(\mbb{R}))$. Let $S= \sum_{j=1}^k T_j^*S_jT_j$ be a proper $C^*$-convex decomposition of $S$ with $S_j \in \mcl{CS}_n(\mbb{R})$ and $T_j\in M_n(\mbb{R})$. Then $\Phi_j:=\Phi_{S_j}\in \mathrm{UCP}(\mbb{C}, M_n(\mbb{R})), j=1,2$ are such that $\Phi_S = \sum_{j=1}^n \mathrm{Ad}_{T_j}\circ\Phi_j$ is a proper $C^*$-convex decomposition of $\Phi_S$. Hence, there exist unitaries $U_j \in M_n(\mbb{R})$ such that $\Phi_j = \mathrm{Ad}_{U_j}\circ\Phi_S$ for all $1\leq j\leq n$. This implies that $S_j = U_j^*SU_j$ so that $S$ is a $C^*$-extreme point of $\mcl{CS}_n(\mbb{R})$. Conversely, assume that $S \in C^*\text{-}ext(\mcl{CS}_n(\mbb{R}))$ and let $\Phi_S = \sum_{j=1}^n \mathrm{Ad}_{T_j}\circ\Phi_j$ be a proper $C^*$-convex decomposition of $\Phi_S$ with $\Phi_j \in \mathrm{UCP}(\mbb{C}, M_n(\mbb{R}))$. Note that each $\Phi_j=\Phi_{S_j}$ for some $S_j\in\mcl{CS}_n(\mbb{R})$. Then $S= \sum_{j=1}^k T_j^*S_jT_j$ is a proper $C^*$-convex decomposition of $S$ and, therefore, there exist unitaries $U_j \in M_n(\mbb{R})$ such that $S_j = U_j^*SU_j$. This gives $\Phi_j = \mathrm{Ad}_{U_j}\circ\Phi_S$, concluding that $\Phi$ is a $C^*$-extreme point. 
    
    Now, if $0 < \norm{S} < 1$, then we choose $s,t \in (0,1)\setminus \{\norm{S}\}$ such that $\norm{S}= \frac{1}{2}s + \frac{1}{2}t$. Then $S = \frac{1}{2}(\frac{s}{\norm{S}}S) + \frac{1}{2}(\frac{t}{\norm{S}}S)$, and hence $S$ is neither a $C^*$-extreme point nor an extreme point of $\mcl{CS}_n(\mbb{R})$. Additionally, zero is clearly not an extreme point since $0 = \frac{1}{2}S + \frac{1}{2}S^*$ for any $S\in \mcl{CS}_n(\mbb{R})$. Therefore, $\norm{S}$ must be $1$.
\end{proof}



\begin{corollary}\label{cor-Cstar-ext-CSn}
\mbox{}
    \begin{enumerate}[label=(\roman*)]
        \item If $n$ is odd, then 
        \begin{align*}
            C^*\text{-}ext(\mcl{CS}_n(\mbb{R}))=\Big\{S\in\mcl{CS}_n(\mbb{R}): S\sim 0\oplus\Matrix{0&I_{\frac{n-1}{2}}\\ -I_{\frac{n-1}{2}}&0}\Big\};
        \end{align*}
        \item If $n$ is even, then 
        \begin{align*}
            C^*\text{-}ext(\mcl{CS}_n(\mbb{R}))=\Big\{S\in\mcl{CS}_n(\mbb{R}): S\sim \Matrix{0&I_{\frac{n}{2}}\\ -I_{\frac{n}{2}}&0}\Big\},
        \end{align*}
        which is precisely the set of all skew-symmetric unitary matrices.
    \end{enumerate}
\end{corollary}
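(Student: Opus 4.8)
The plan is to reduce the statement to the classification in Theorem~\ref{thm-commutative-Cext-structure-MAIN} via the preceding proposition, which identifies $C^*$-extreme points of $\mcl{CS}_n(\mbb{R})$ with $C^*$-extreme points among the maps $\Phi_S$. First I would make the identification $\mbb{C}\cong C(\Omega,-)$ with $\Omega=\{1,2\}$ and $\ol{1}=2,\ \ol{2}=1$ explicit, and set up a dictionary translating the abstract pure UCP maps of Corollary~\ref{cor-puremaps} into concrete matrices. Under $f\mapsto f(1)$, taking $w=1$ (equivalently $w=2$) gives $\rho_w(\lambda)=Re(\lambda)$, $\Pi_w(\lambda)=\sMatrix{Re\lambda & Im\lambda\\ -Im\lambda & Re\lambda}$ and $\Pi_{\ol{w}}(\lambda)=\sMatrix{Re\lambda & -Im\lambda\\ Im\lambda & Re\lambda}$. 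Comparing with $\Phi_S(\lambda)=Re(\lambda)I_n+Im(\lambda)S$, these are exactly $\Phi_S$ with $S$ the $1\times 1$ block $0$, the $2\times 2$ block $J:=\sMatrix{0&1\\-1&0}$, and $-J$, respectively. I would also record that $\Phi_{S_1}\sim\Phi_{S_2}$ if and only if $S_1\sim S_2$ (equating the $Im(\lambda)$-coefficients in $\Phi_{S_1}=\mathrm{Ad}_U\circ\Phi_{S_2}$), so that unitary equivalence transfers between the two pictures.

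Next I would apply Theorem~\ref{thm-commutative-Cext-structure-MAIN} to $\Phi_S\in\mathrm{UCP}_{C^*-ext}(\mbb{C},M_n(\mbb{R}))$. Since both points of $\Omega$ satisfy $w\neq\ol{w}$, there are no $\xi_j$ with $\xi_j=\ol{\xi_j}$, forcing $p=0$; and since $1=\ol{2}$, the requirement $\eta_i\notin\{\ol{\eta_i},\ol{\eta_j}\}$ allows only a single point $\eta_1\in\{1,2\}$, so $q=1$. Hence $\Phi_S\sim\Phi_{\eta_1}$ with $\Phi_{\eta_1}\in\mcl{E}_{\eta_1}^n$; that is, by Proposition~\ref{prop-rho-pi-Cextt}, $\Phi_S$ is unitarily equivalent to a type-$w$ direct sum of copies of $\Pi_w,\Pi_{\ol w}$ together with at most one copy of $\rho_w$. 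Using $\Pi_w\sim\Pi_{\ol w}$ (equivalently $J\sim -J$) and the dictionary above, this translates to $S\sim J^{\oplus l}\oplus 0^{\oplus k}$ with $k\in\{0,1\}$ and $2l+k=n$.

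The parity count then pins down the canonical form. If $n$ is even, $k=0$ is forced (since $k=1$ would make $2l+1$ odd), so $l=n/2$; if $n$ is odd, $k=1$ is forced (an even $2l$ cannot equal $n$), so $l=(n-1)/2$ and there is exactly one zero block. A coordinate permutation gives $J^{\oplus l}\sim\sMatrix{0&I_l\\-I_l&0}$, producing precisely $S\sim\sMatrix{0&I_{n/2}\\-I_{n/2}&0}$ for $n$ even and $S\sim 0\oplus\sMatrix{0&I_{(n-1)/2}\\-I_{(n-1)/2}&0}$ for $n$ odd. For $n\geq 2$ at least one $J$-block is present, giving $\norm{S}=1$ as required; conversely every matrix of the stated canonical form lies in $\mcl{CS}_n(\mbb{R})$ and, by running the argument backwards (it is a type-$w$ map with at most one $\rho_w$), is $C^*$-extreme, which yields the reverse inclusion.

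For the even case I would finally check the added phrase ``skew-symmetric unitary matrices'': the matrix $\sMatrix{0&I_{n/2}\\-I_{n/2}&0}$ is manifestly a skew-symmetric unitary, and conversely any skew-symmetric unitary $S$ satisfies $S^2=-S^*S=-I_n$, so by the orthogonal normal form for real skew-symmetric matrices every $2\times 2$ block has singular value $1$ and there is no kernel; thus $n$ is even and $S\sim\sMatrix{0&I_{n/2}\\-I_{n/2}&0}$. I expect the main obstacle to be the bookkeeping in the second paragraph: correctly identifying $\Omega$, verifying that the disjointness condition collapses the classification to a single point $\eta_1$ (so $p=0$, $q=1$), and matching the abstract $\rho_w/\Pi_w/\Pi_{\ol w}$ blocks to the matrices $0$ and $\pm J$. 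Once this correspondence is in place, the parity count and the normal-form argument are routine.
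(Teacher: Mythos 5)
Your proposal is correct and follows essentially the same route as the paper: identify $\mbb{C}\cong C(\{1,2\},-)$, translate $\rho_w,\Pi_w,\Pi_{\ol w}$ into the blocks $0$ and $\pm J$, apply Theorem \ref{thm-commutative-Cext-structure-MAIN} together with the preceding proposition, and conclude by the parity count. Your write-up is somewhat more explicit than the paper's (in particular the deduction $p=0$, $q=1$ and the verification that the even case gives exactly the skew-symmetric unitaries, which the paper asserts without proof), but these are elaborations of the same argument rather than a different one.
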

    
\begin{proof}
    $(i)$ Recall that $C(\{1,2\},-)\cong\mbb{C}$ as real $C^*$-algebras via the isomorphism $f\mapsto f(1)$, where $\ol{1}=2$ and $\ol{2}=1$. Under this isomorphism, for any  $w\in\{1,2\}$, we have
    \begin{align*}
        \rho_w(\lambda)=Re(\lambda)
        \qquad\mbox{and}\qquad
        \Pi_w(\lambda)\sim\Matrix{Re(\lambda)& Im(\lambda)\\ -Im(\lambda)& Re(\lambda)},
        \qquad\forall~\lambda\in\mbb{C}.
    \end{align*}
    Since $n$ is odd, by Theorem \ref{thm-commutative-Cext-structure-MAIN}, any $\Phi\in\mathrm{UCP}(\mbb{C},M_n(\mbb{R}))$ is a $C^*$-extreme point if and only if 
    \begin{align}\label{eq-Phi-C-Mn(R)}
        \Phi(\lambda)\sim&~ Re(\lambda)\bigoplus\underbrace{\Matrix{Re(\lambda)& Im(\lambda)\\ -Im(\lambda)& Re(\lambda)}\bigoplus\cdots\bigoplus\Matrix{Re(\lambda)& Im(\lambda)\\ -Im(\lambda)& Re(\lambda)}}_{(\frac{n-1}{2})-times},\qquad\forall~\lambda\in\mbb{C} \\
                         &=Re(\lambda) I_n+ Im(\lambda) S \notag,
    \end{align}
    where 
    \begin{align}\label{eq-S-C-Mn(R)}
        S=0\oplus\underbrace{\Matrix{0&1\\-1&0}\oplus\cdots\oplus\Matrix{0&1\\-1&0} }_{(\frac{n-1}{2})-times}
            \quad\sim\quad 0\oplus\Matrix{0&I_{\frac{n-1}{2}}\\ -I_{\frac{n-1}{2}}&0}.  
    \end{align}
    Now, the result follows from the above proposition. \\
    $(ii)$ The proof follows similarly to the case when $n$ is odd. Note that if $n$ is even, then the first term in the direct sum of \eqref{eq-Phi-C-Mn(R)} and \eqref{eq-S-C-Mn(R)} will be absent.
\end{proof}
 
\begin{example}    
    Note that any UCP map from $\mbb{C}$ to $M_2(\mbb{R})$ is of the form $\Phi_S(\lambda) = Re(\lambda)I_2 + Im(\lambda)S$, where $S=\sMatrix{0&t\\-t&0}$ with $\norm{S}=\abs{t} \leq 1$. Consequently, there are only two $C^*$-extreme points and two linear extreme points; they correspond to $\abs{t}=1$, i.e, $t=\pm 1$.
 \end{example}

\begin{example}
    We saw that any UCP map from $\mbb{C}$ to $M_3(\mbb{R})$ is of the form $\Phi_S(\lambda) = Re(\lambda)I_3 + Im(\lambda)S$, for some $S = \sMatrix{0 && \alpha && \beta \\ -\alpha && 0 && \gamma \\-\beta && -\gamma && 0}\in M_3(\mbb{R})$ with $\norm{S}=\sqrt{\alpha^2+\beta^2+\gamma^2} \leq 1$. \\If $\Phi_S \in \mathrm{UCP}_{C^*-ext}(\mbb{C}, M_3(\mbb{R})) $, then from the above proposition we have $\alpha^2 + \beta^2 + \gamma^2 = 1$. However, not all of them correspond to the $C^*$-extreme points. For, by Theorem \ref{thm-commutative-Cext-structure-MAIN}, we know that (up to unitary equivalence) any $C^*-$extreme point is of the form: 
    \begin{align*}
        \lambda \mapsto\Matrix{Re (\lambda) && Im(\lambda) && 0 \\ -Im(\lambda) && Re(\lambda) && 0 \\0 && 0 && Re(\lambda)},
    \end{align*}
    and these maps correspond to $\alpha=1,\beta=0,\gamma=0$. However, it can be seen that the linear extreme points correspond to all $(\alpha,\beta,\gamma) \in \mbb{R}^3$ such that $\alpha^2+\beta^2+\gamma^2=1$. This follows from the fact that the map $S\mapsto (\alpha,\beta,\gamma)$ defines an affine isomorphism from the convex set
    \begin{align*}
        \Big\{S=\sMatrix{0 && \alpha && \beta \\ -\alpha && 0 && \gamma \\-\beta && -\gamma && 0}\in M_3(\mbb{R}): \alpha^2+\beta^2+\gamma^2\leq 1\Big\}
    \end{align*}
    onto the closed unit ball $\{(\alpha,\beta,\gamma) \in \mbb{R}^3: \alpha^2 + \beta^2 + \gamma^2 \leq 1\}$, which has  extreme points precisely $\{(\alpha,\beta,\gamma) \in \mbb{R}^3: \alpha^2 + \beta^2 + \gamma^2 = 1\}$ (c.f. \cite[Theorem B.2]{Lim96}).
\end{example}

\section*{Acknowledgement}
The first author is supported by a CSIR fellowship (File No. 09/084(0780)/2020-EMR-I). The second and third authors acknowledge financial support from the IoE Project of MHRD (India) with reference no SB22231267MAETWO008573. Additionally, the second author is partially supported by the NBHM grant (No. 02011/10/2023 NBHM (R.P) R\&D II/4225).

\bibliographystyle{alpha}

\end{document}